\documentclass[twoside,11pt]{amsart}
\usepackage{graphicx, lscape, rotating}
\usepackage{amsmath, amsfonts, amssymb, ifthen}
\usepackage{xypic}

\pdfoutput=1

\newcommand{\field}[1]{\mathbb{#1}}
\newcommand{\CC}{\ensuremath{\field{C}}} 
\newcommand{\Sbb}{\ensuremath{\field{S}}} 
\newcommand{\MM}{\ensuremath{\field{M}}} 
\newcommand{\RR}{\ensuremath{\field{R}}} 
 
\newcommand{\DD}{\ensuremath{\field{D}}} 
\newcommand{\ZZ}{\ensuremath{\field{Z}}} 

\newcommand{\Ch}{\ensuremath{\hat{\field{C}}}}

\newcommand{\Sone}{\ensuremath{\field{S}^1}}
\newcommand{\Stwo}{\ensuremath{\field{S}^2}}
\newcommand{\Fam}{\ensuremath{\mathcal{F}}}
\newcommand{\Mspace}{\ensuremath{\mathcal{M}}}

\newcommand{\mate}{$ \begin{turn}{90}$\vDash$\end{turn} $}



\theoremstyle{plain}
\newtheorem{thm}{Theorem}[section]

\newtheorem{lem}[thm]{Lemma}
\newtheorem{fact}[thm]{Fact}

\theoremstyle{definition}
\newtheorem{defn}[thm]{Definition}
\newtheorem{notation}[thm]{Notation}



\newcommand{\drawfigJonesixth}{\scalebox{.4}{\includegraphics{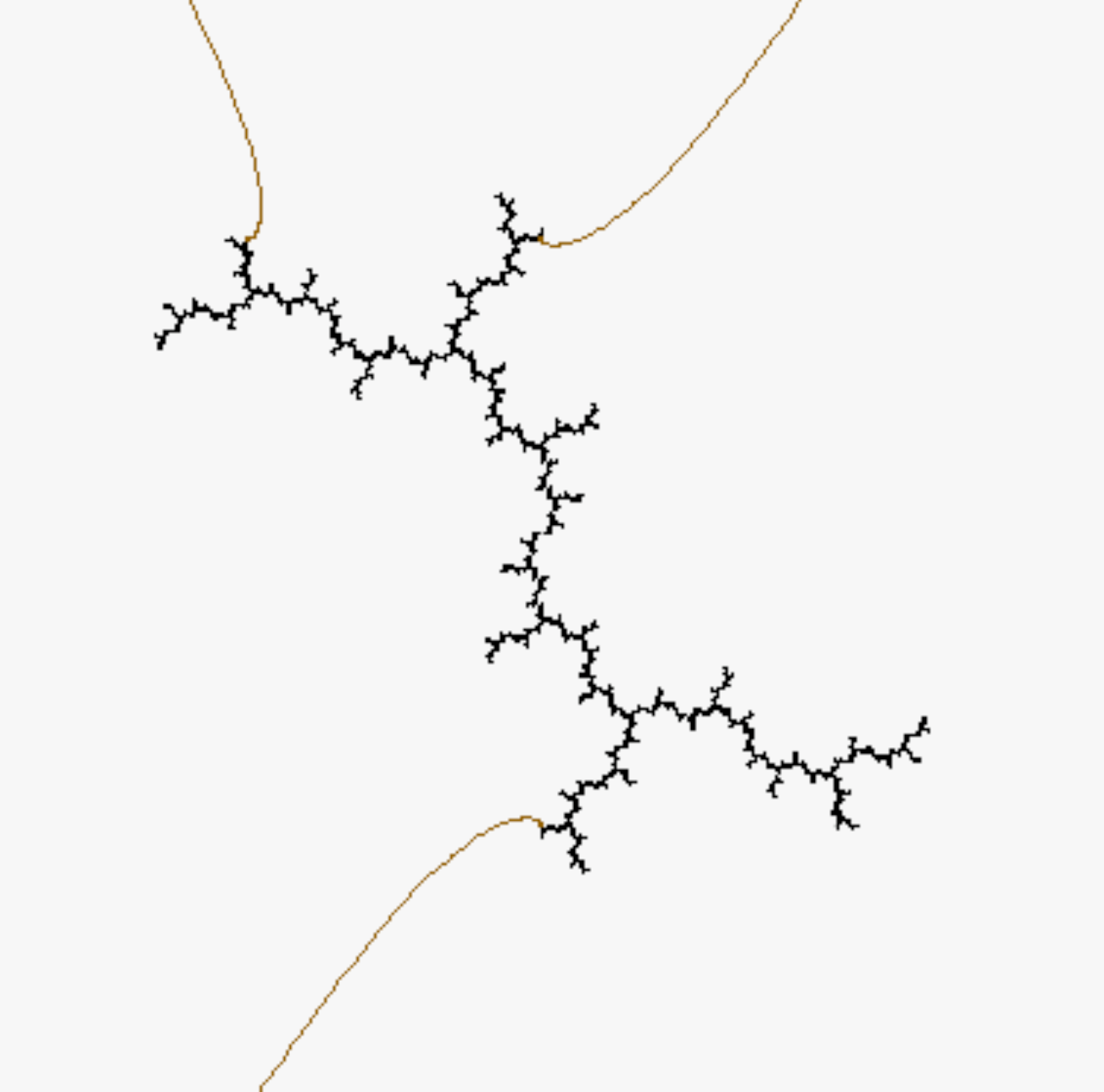}}}
\newcommand{\drawfigJonefourth}{\scalebox{.4}{\includegraphics{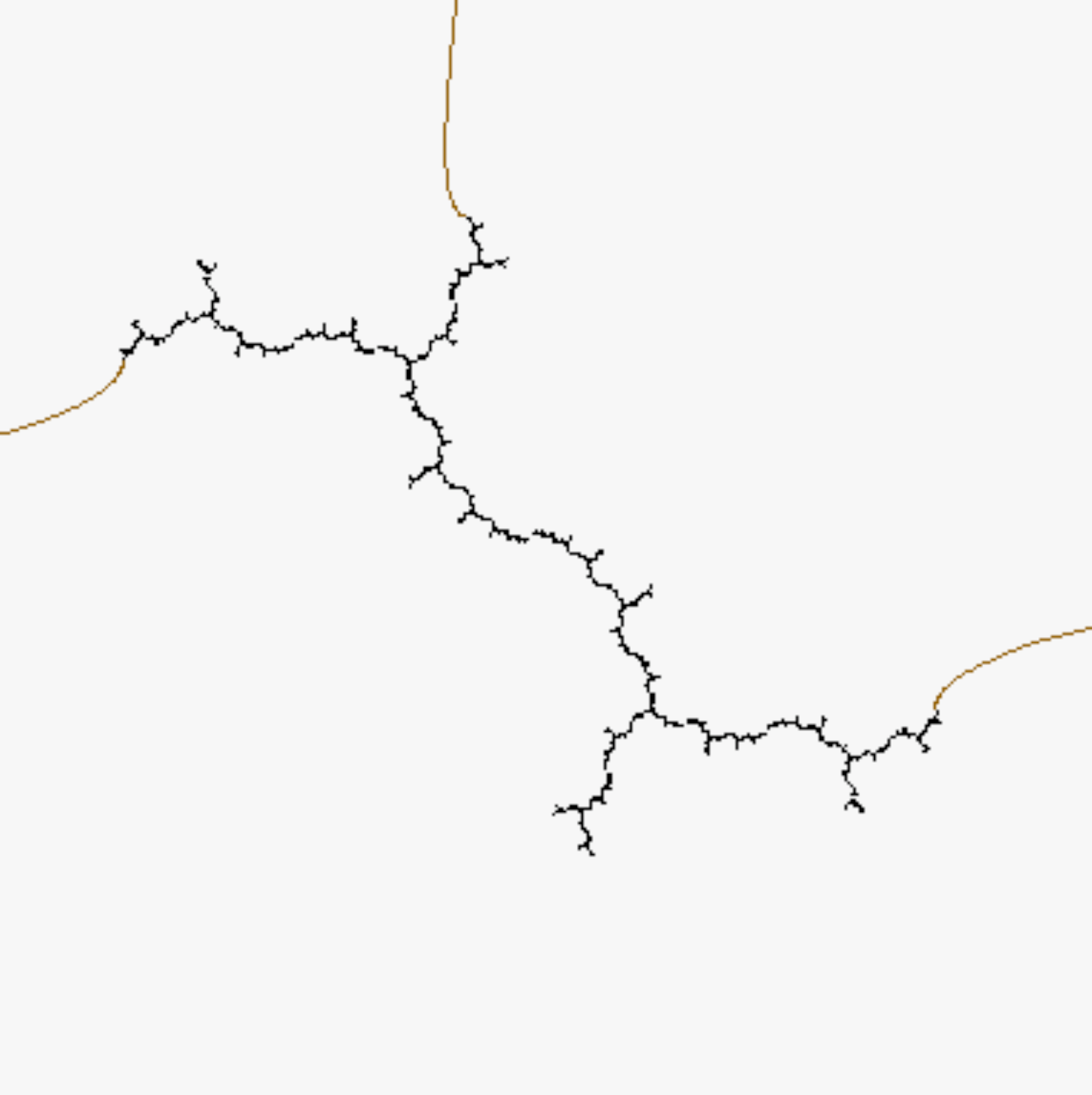}}}

\newcommand{\drawfigJonethirdonesixth}{\scalebox{.4}{\includegraphics{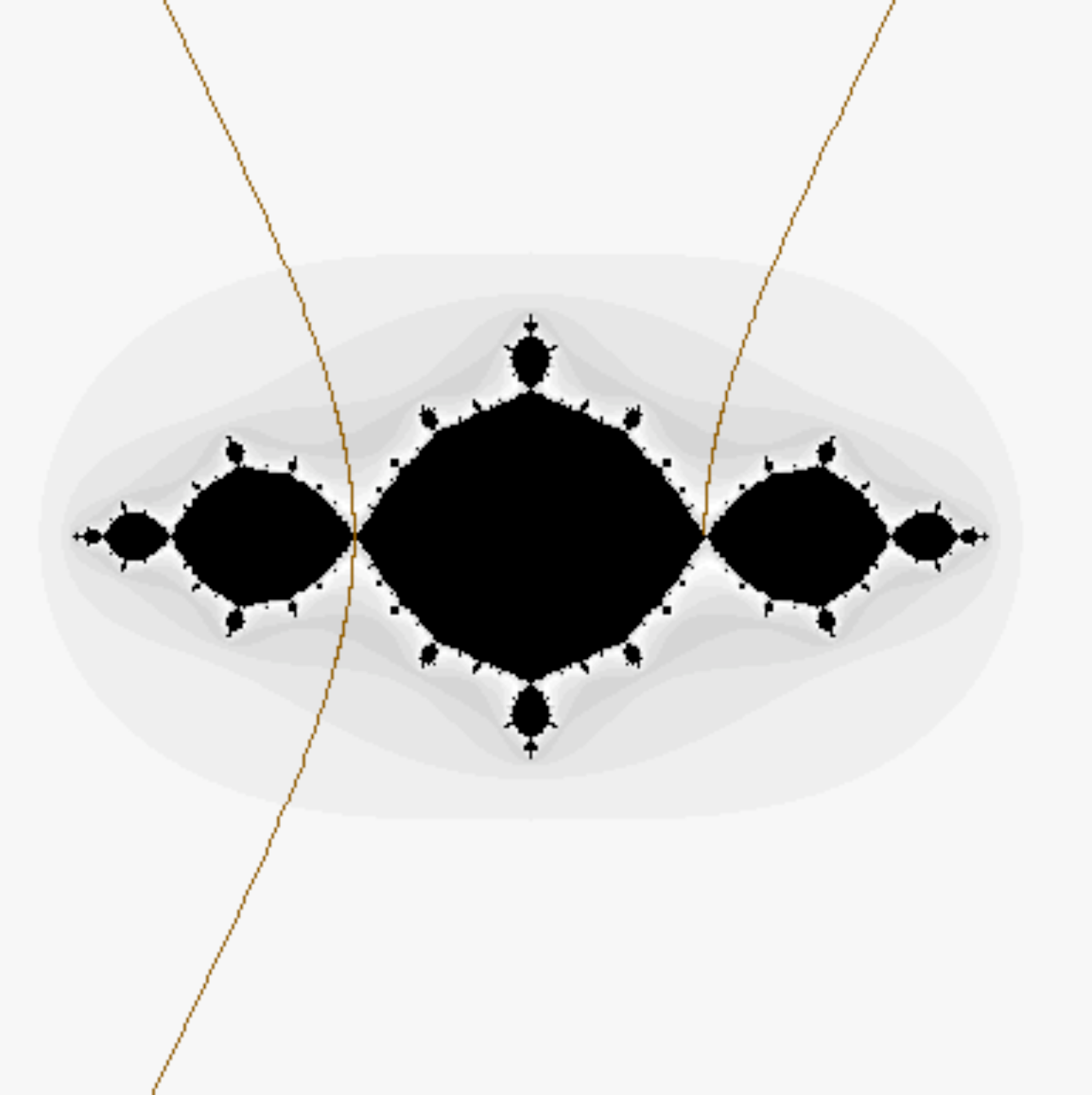}}}

\newcommand{\drawfigJonethirdoneseventh}{\scalebox{.3}{\includegraphics{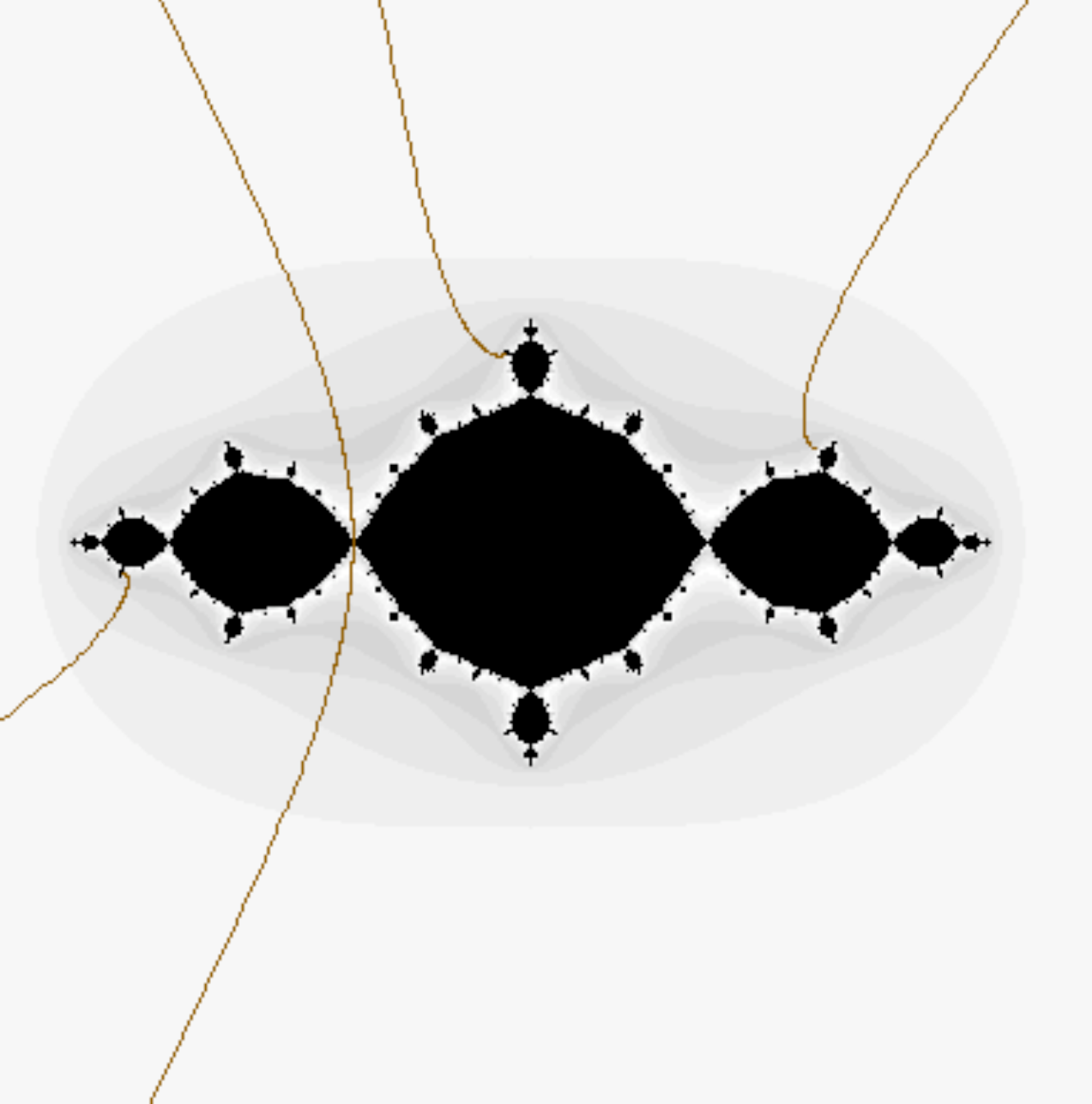}}}
\newcommand{\drawfigJoneseventhonethird}{\scalebox{.3}{\includegraphics{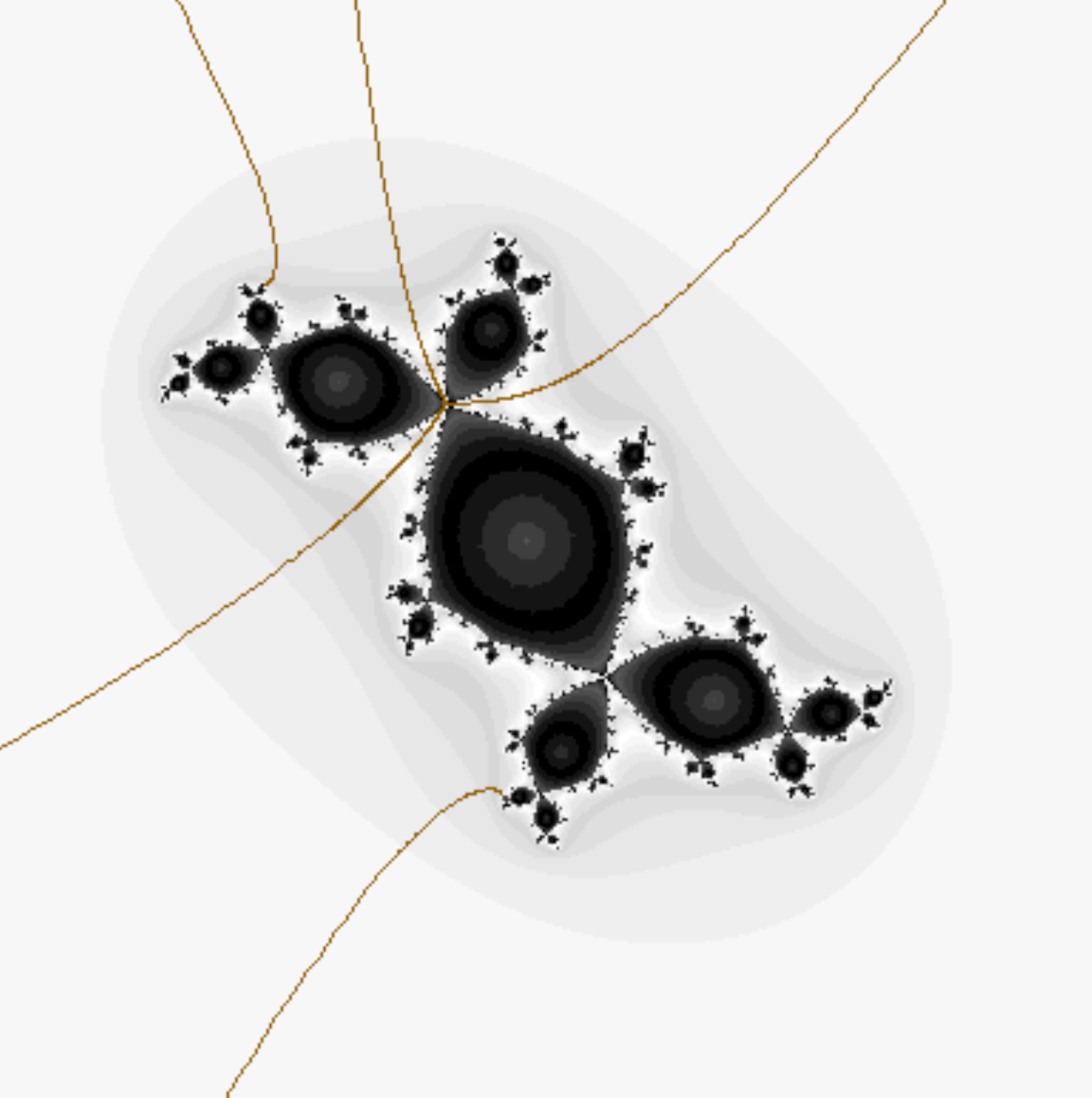}}}

\newcommand{\drawfigJonesevenththreeseventh}{\scalebox{.4}{\includegraphics{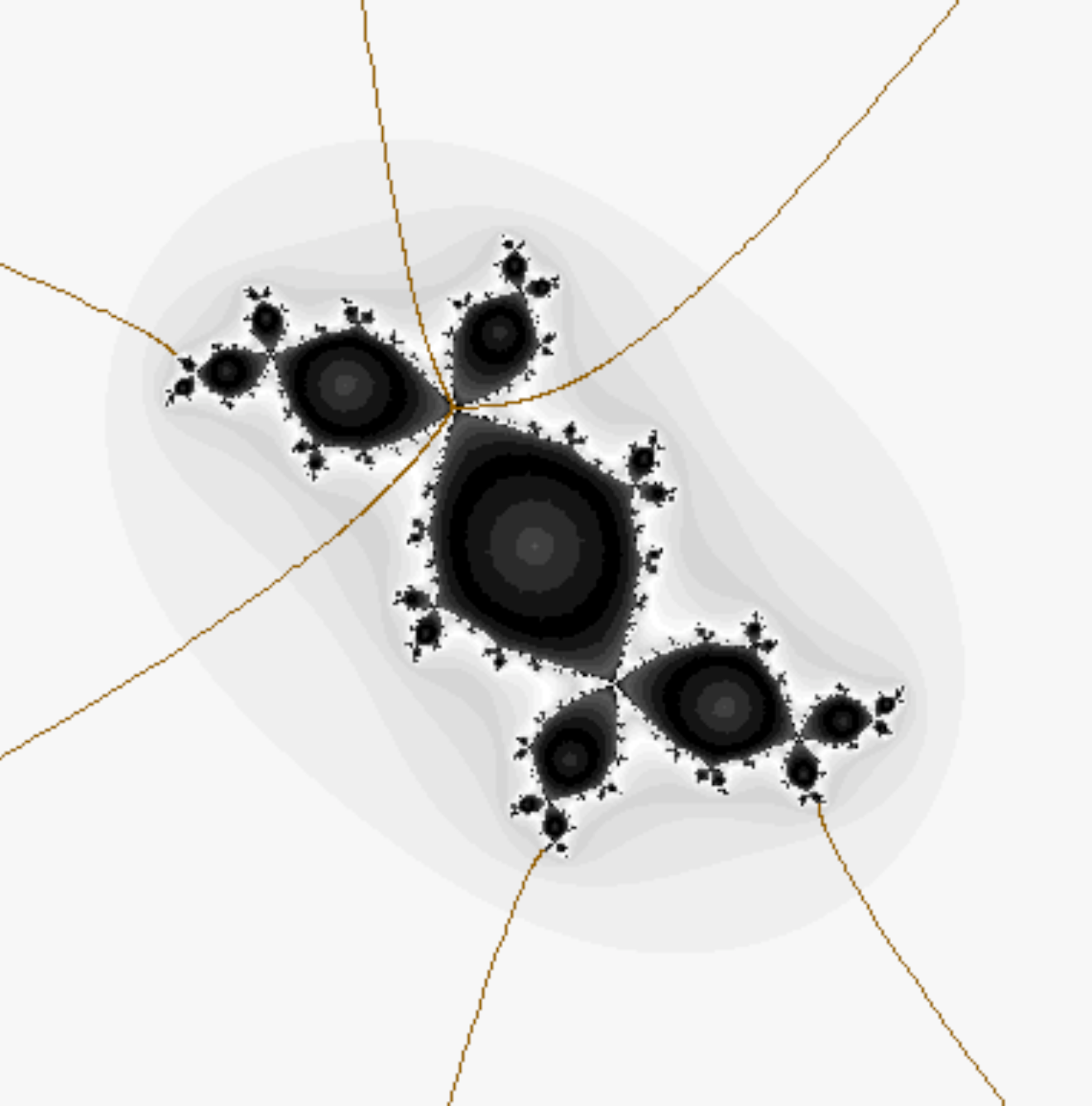}}}
\newcommand{\drawfigJthreeseventhoneseventh}{\scalebox{.4}{\includegraphics{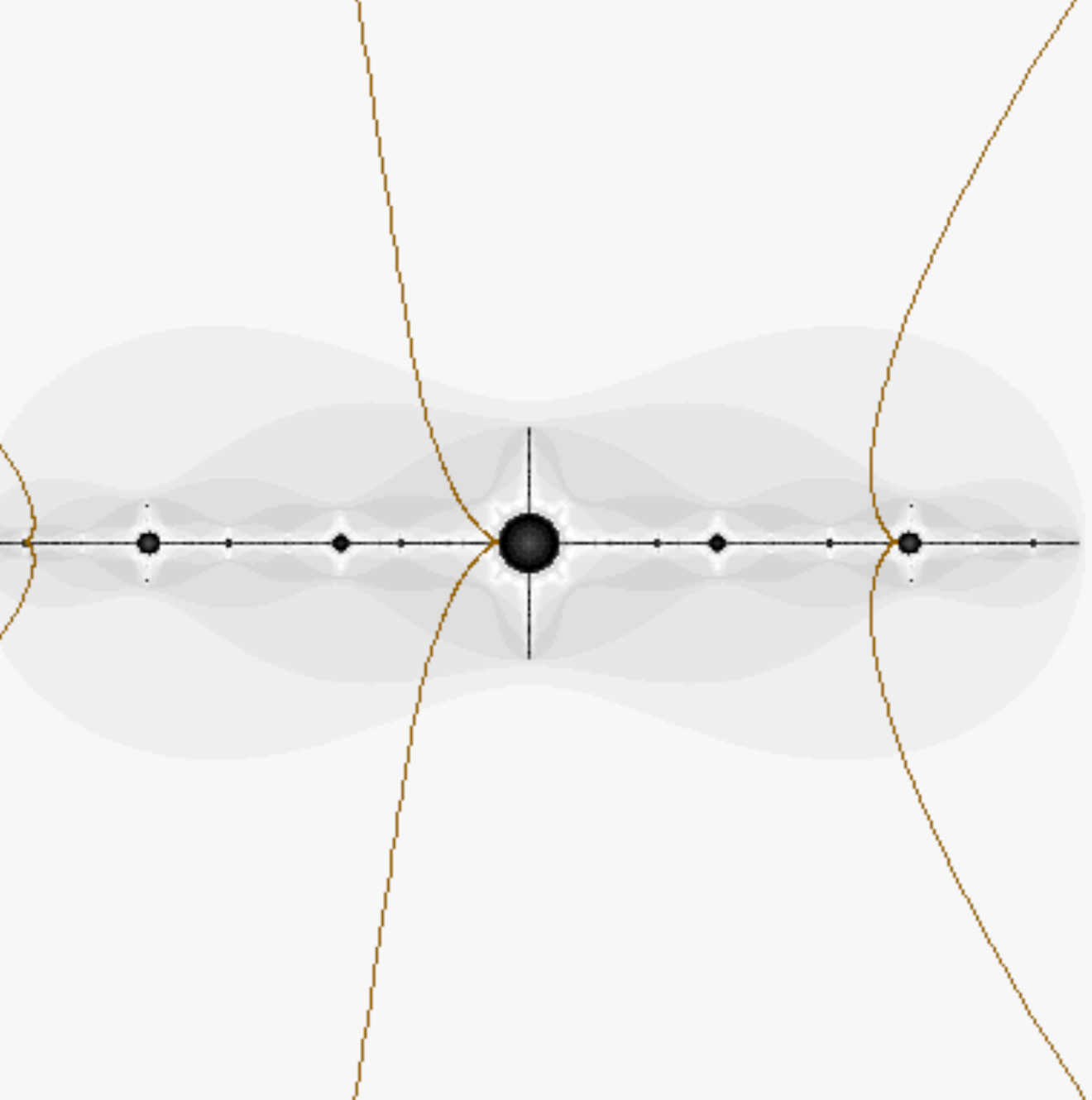}}}

\newcommand{\drawfigJonefifth}{\scalebox{.4}{\includegraphics{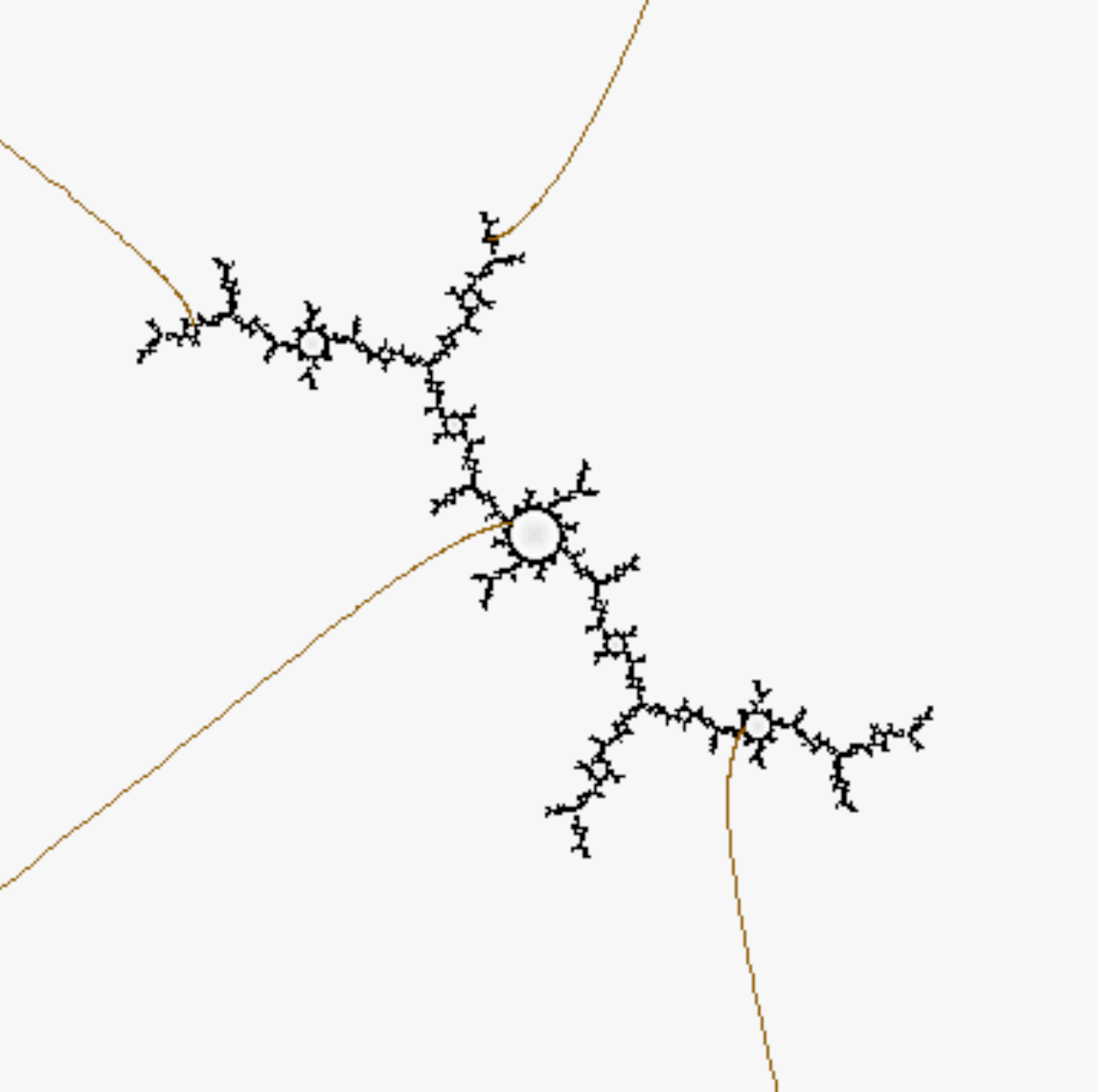}}}

\newcommand{\drawfigmandel}{\scalebox{.8}{\includegraphics{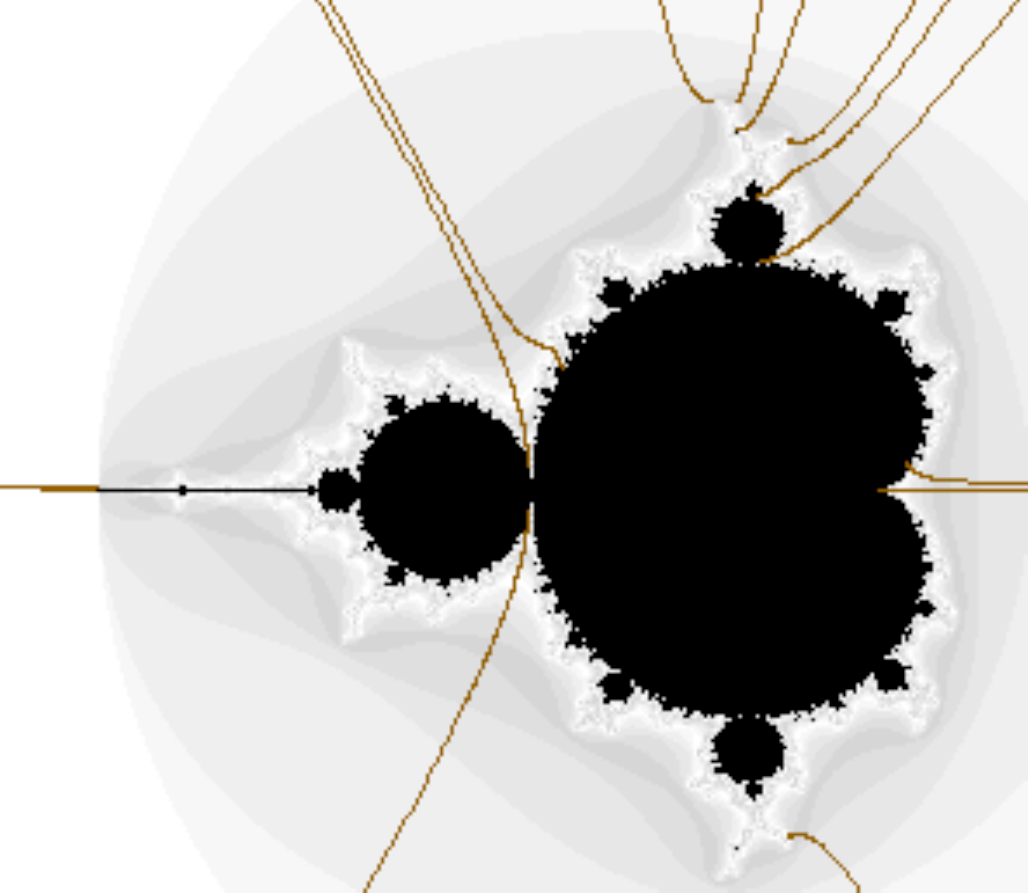}}}

\newcommand{\drawfigspiders}{\scalebox{.175}{\includegraphics{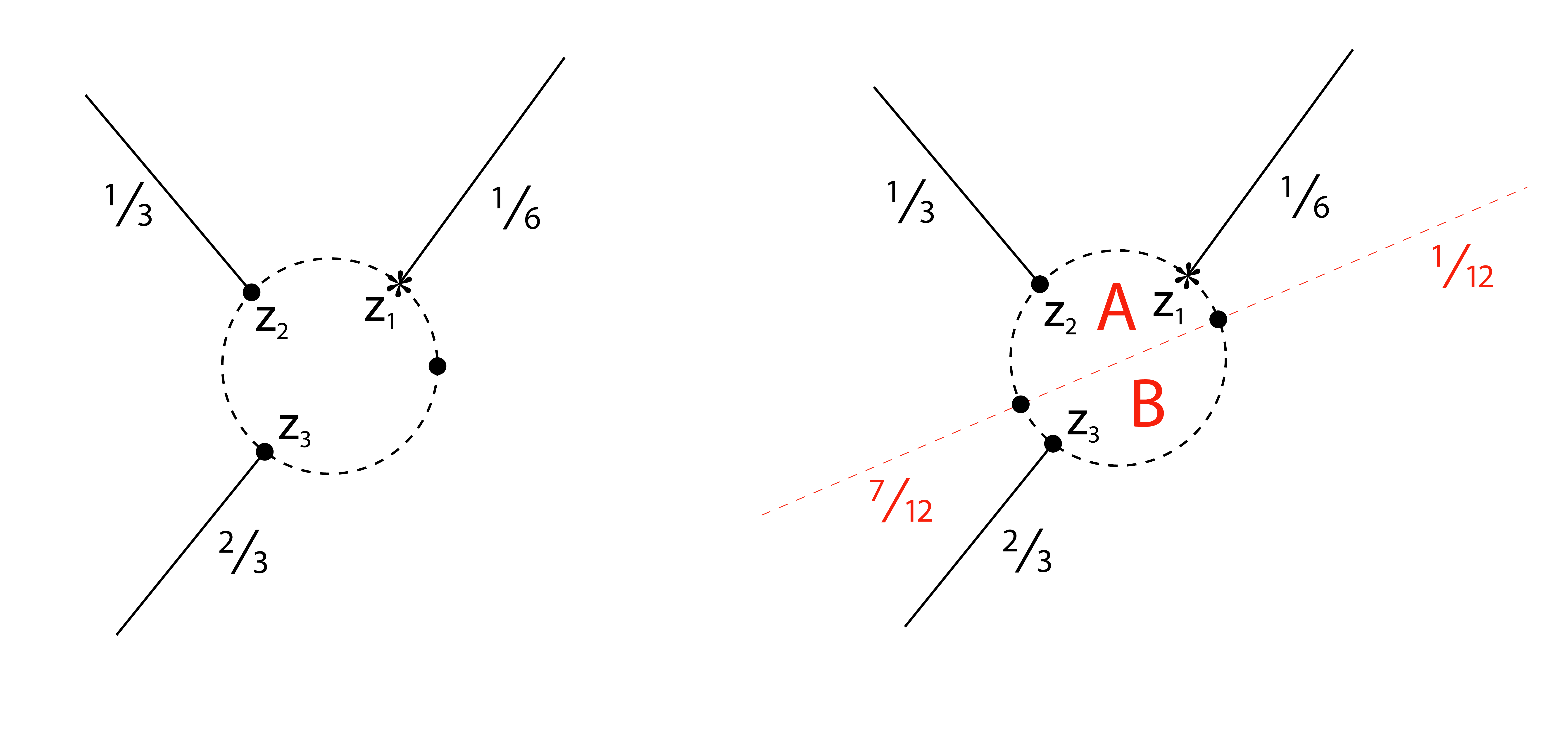}}}
\newcommand{\drawfigformmedusa}{\scalebox{.17}{\includegraphics{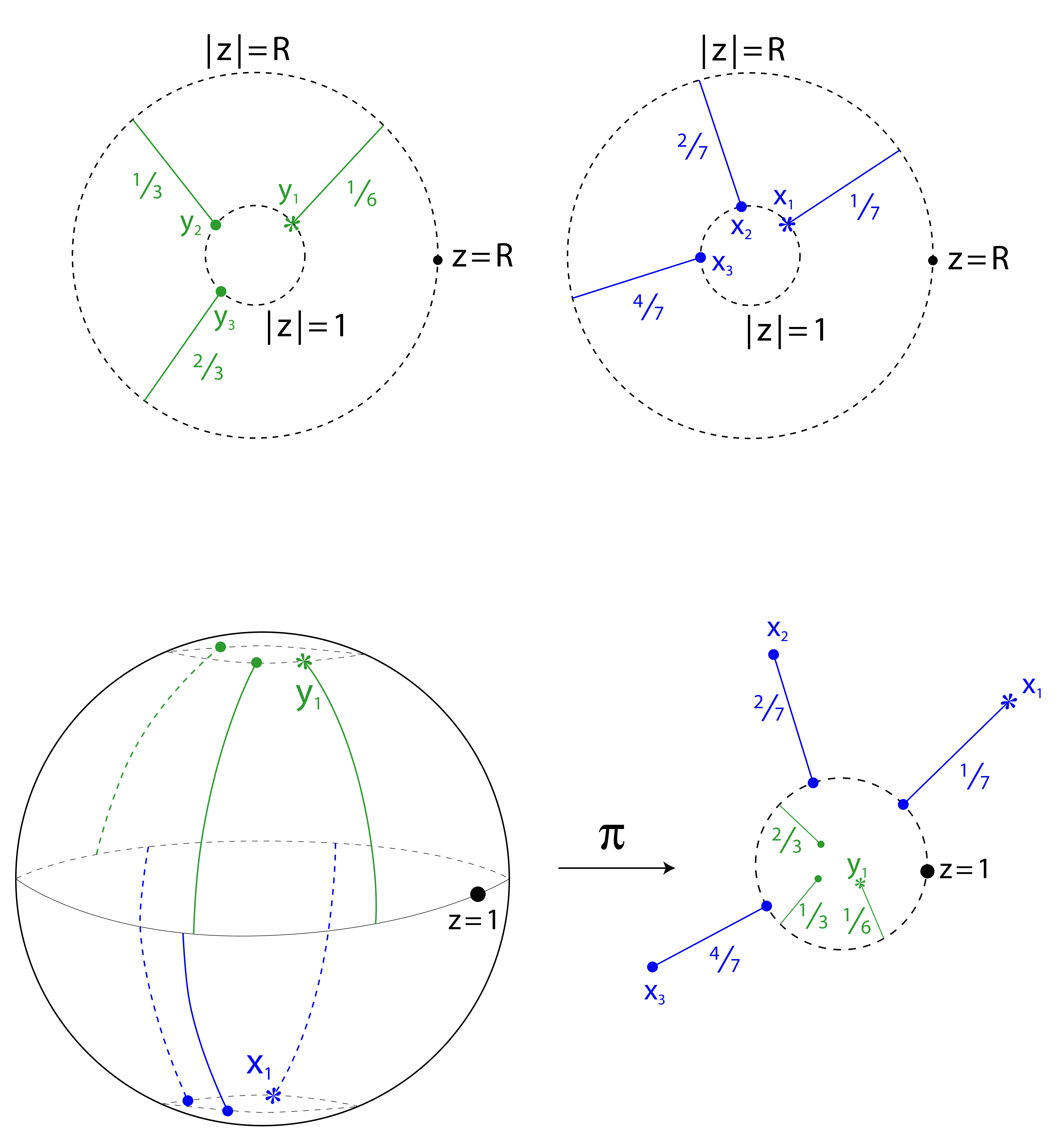}}}

\newcommand{\drawfigrealmedusazero}{\scalebox{.1}{\includegraphics{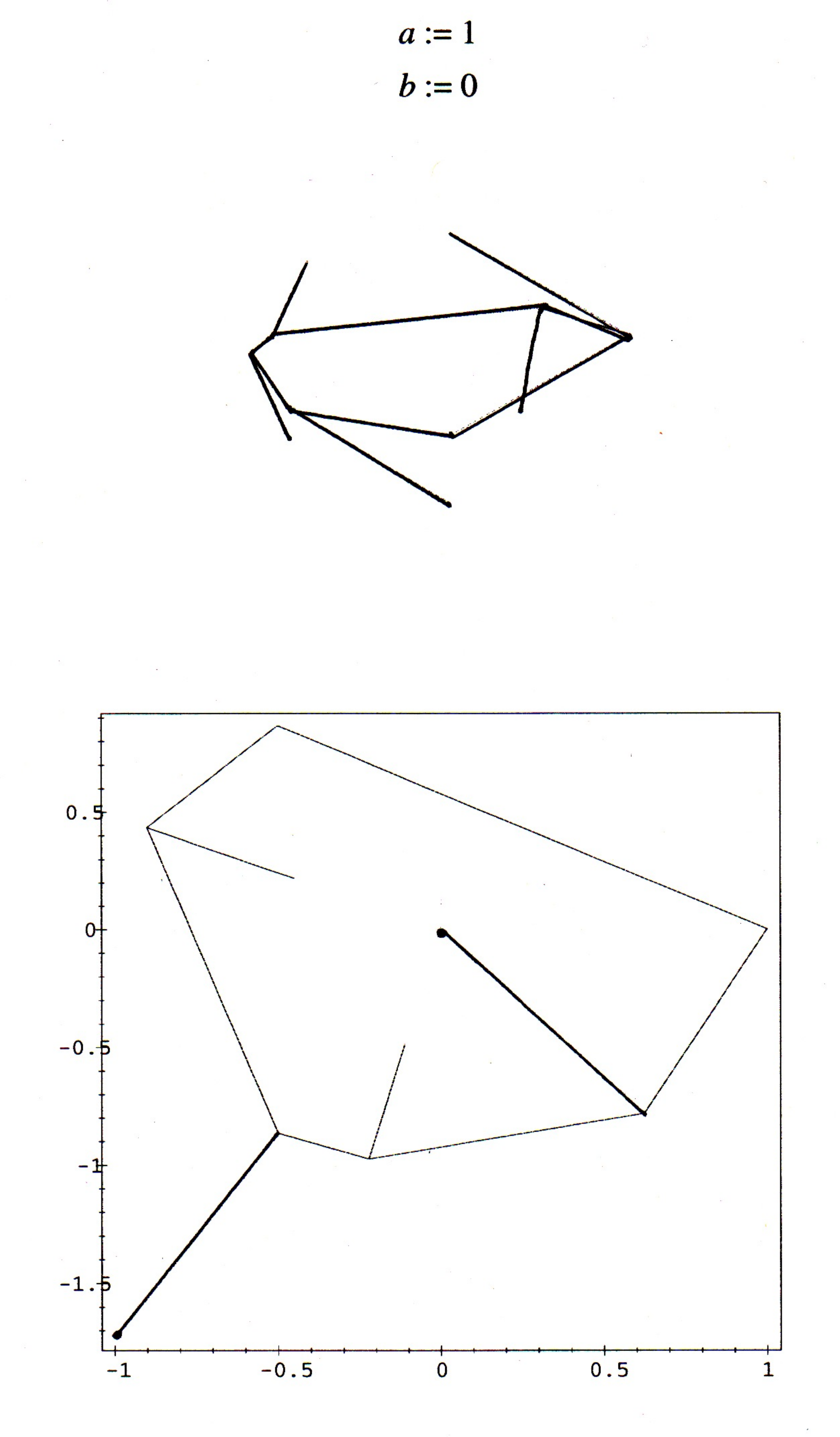}}}
\newcommand{\drawfigrealmedusatwo}{\scalebox{.1}{\includegraphics{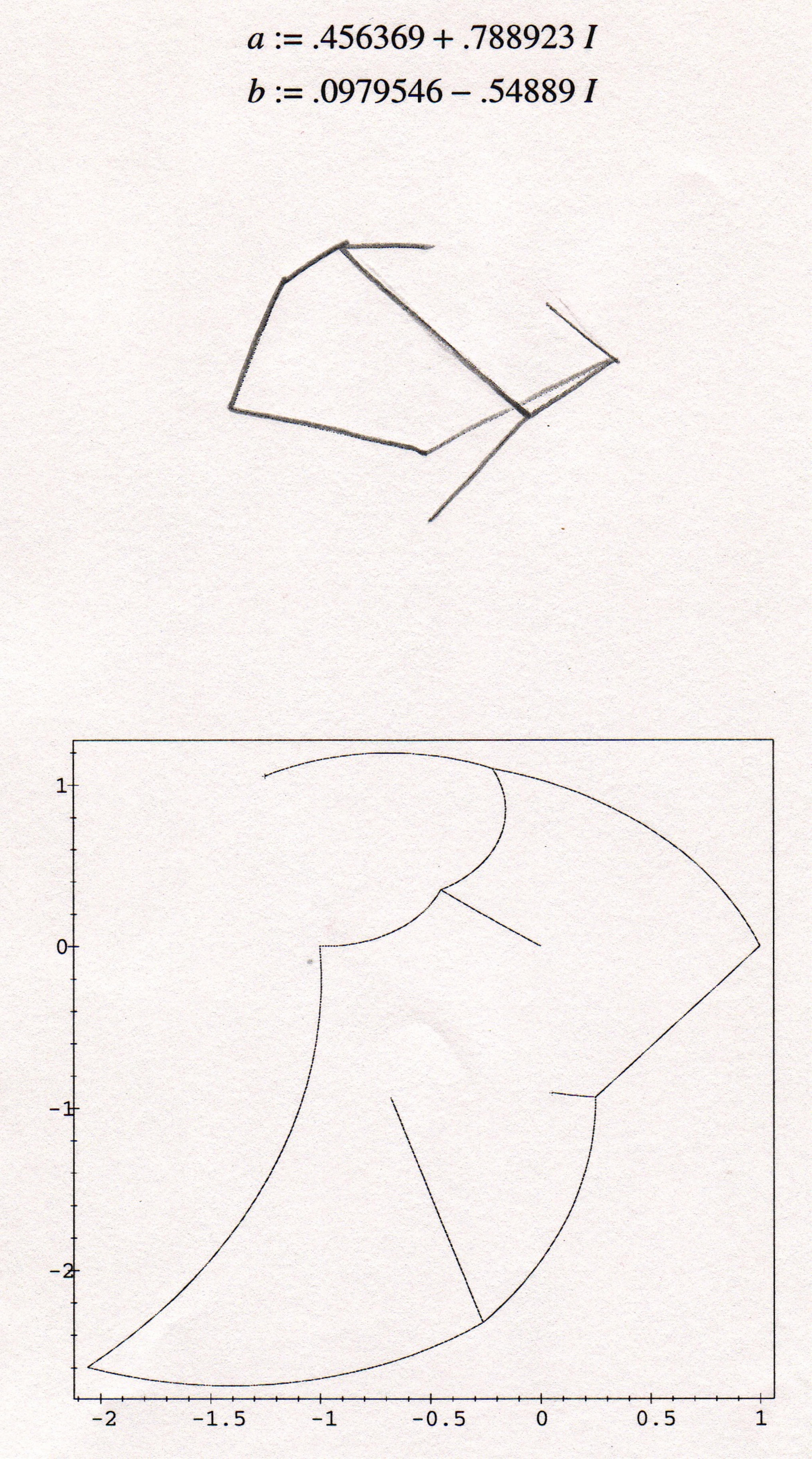}}}
\newcommand{\drawfigrealmedusatwenty}{\scalebox{.1}{\includegraphics{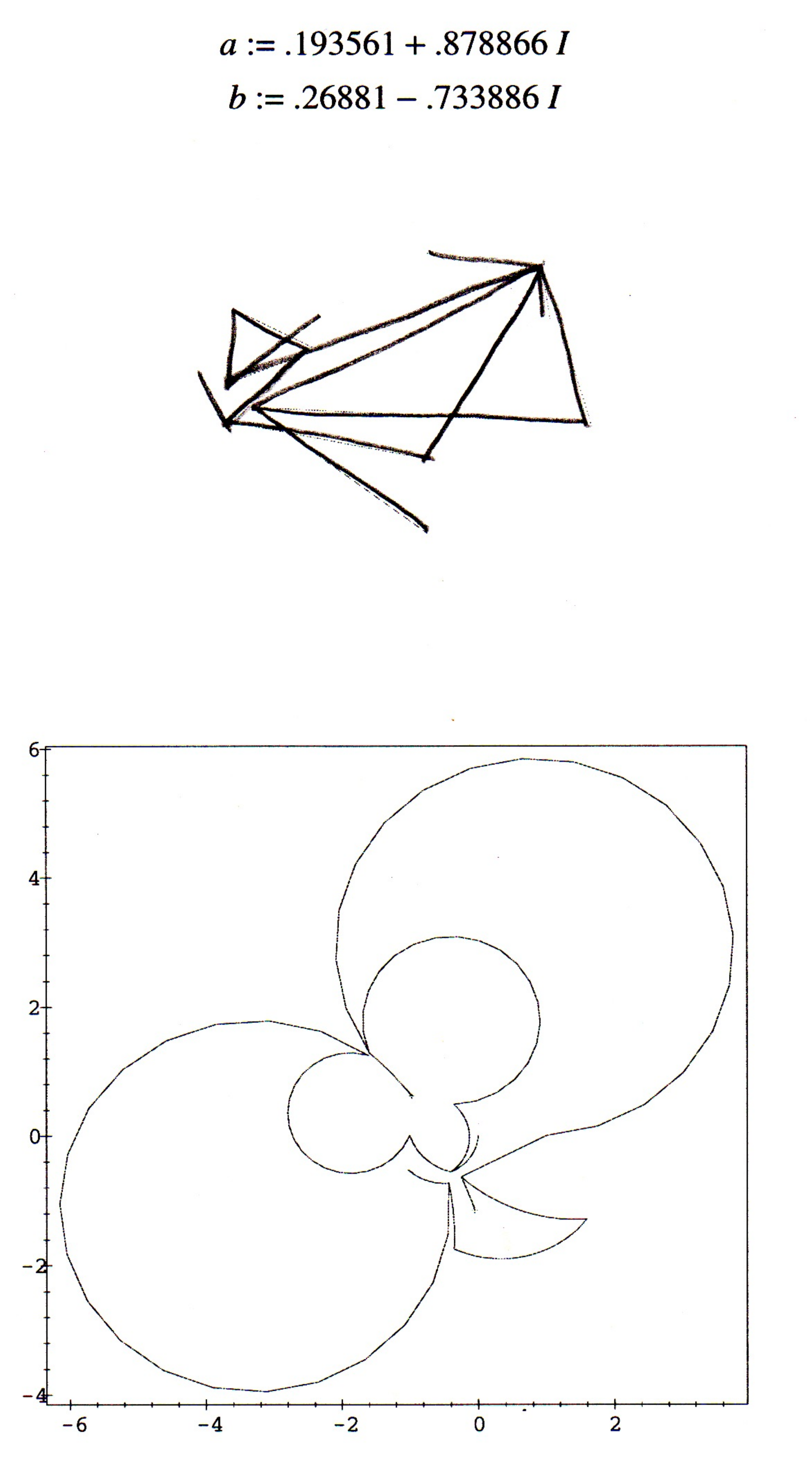}}}

\newcommand{\drawfiglatone}{\scalebox{.25}{\includegraphics{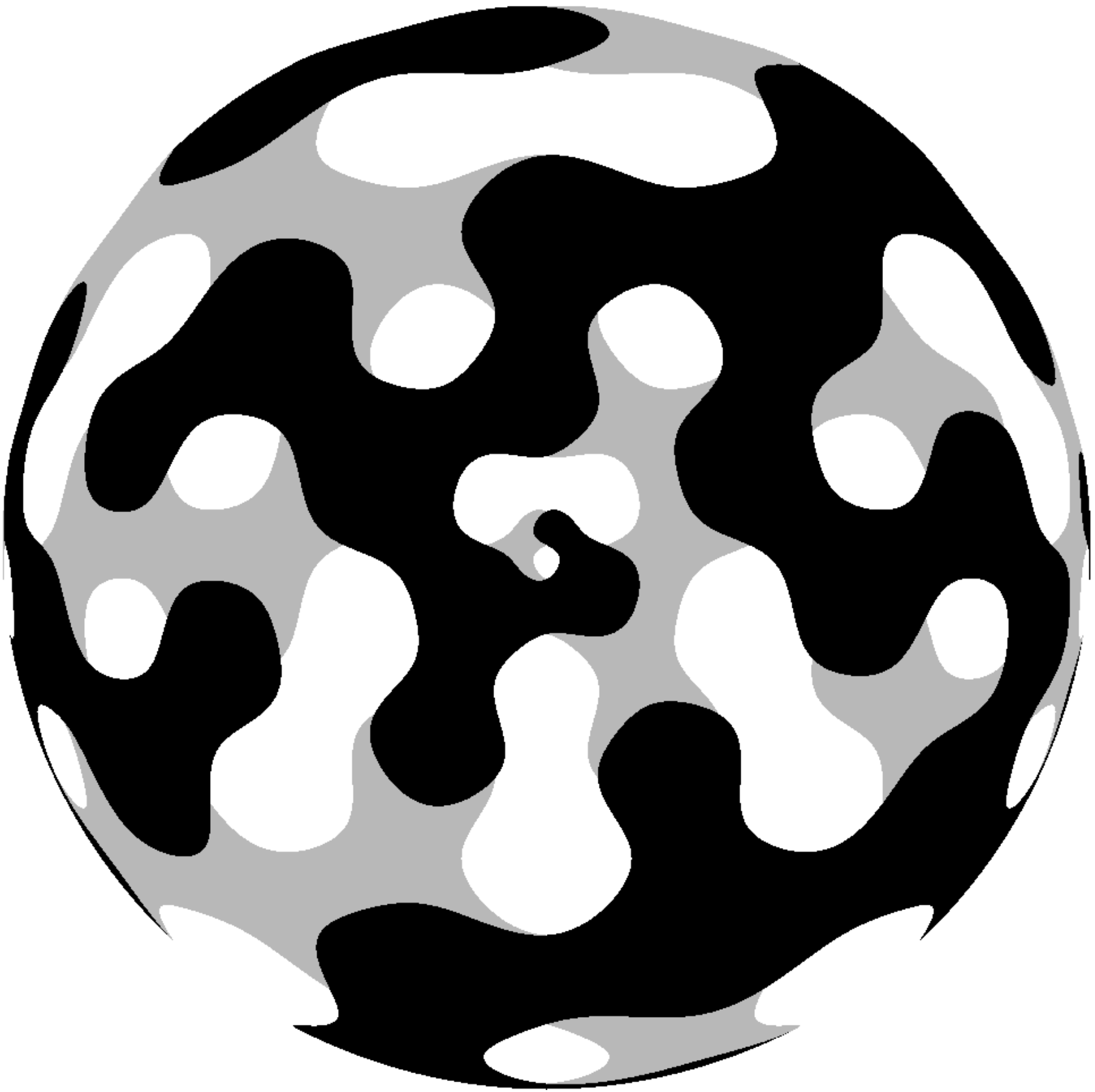}}}

\newcommand{\drawfiglattwo}{\scalebox{.25}{\includegraphics{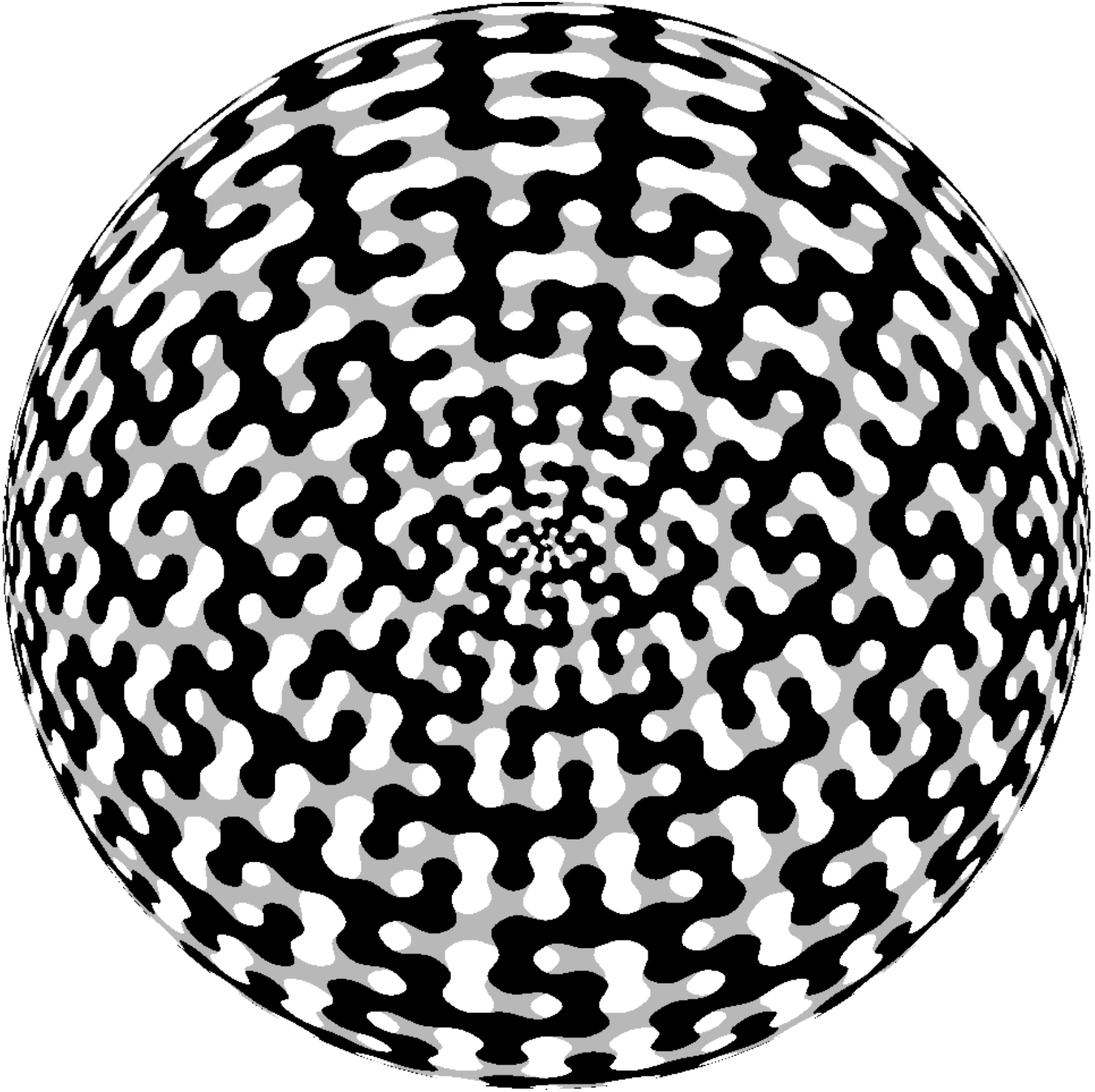}}}

\newcommand{\drawfiglatthree}{\scalebox{.25}{\includegraphics{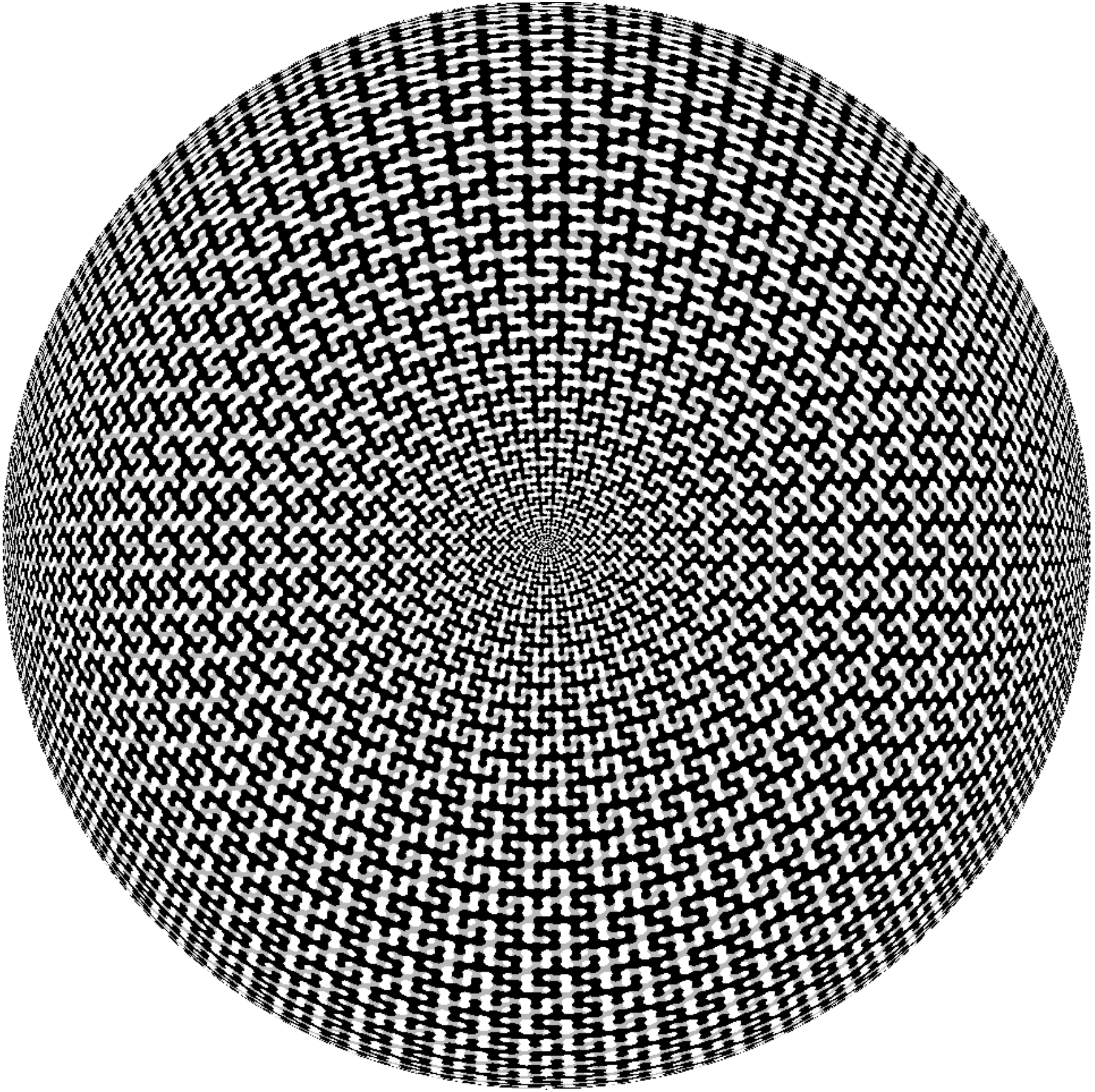}}}

\newcommand{\drawfigrabmatebas}{\scalebox{.25}{\includegraphics{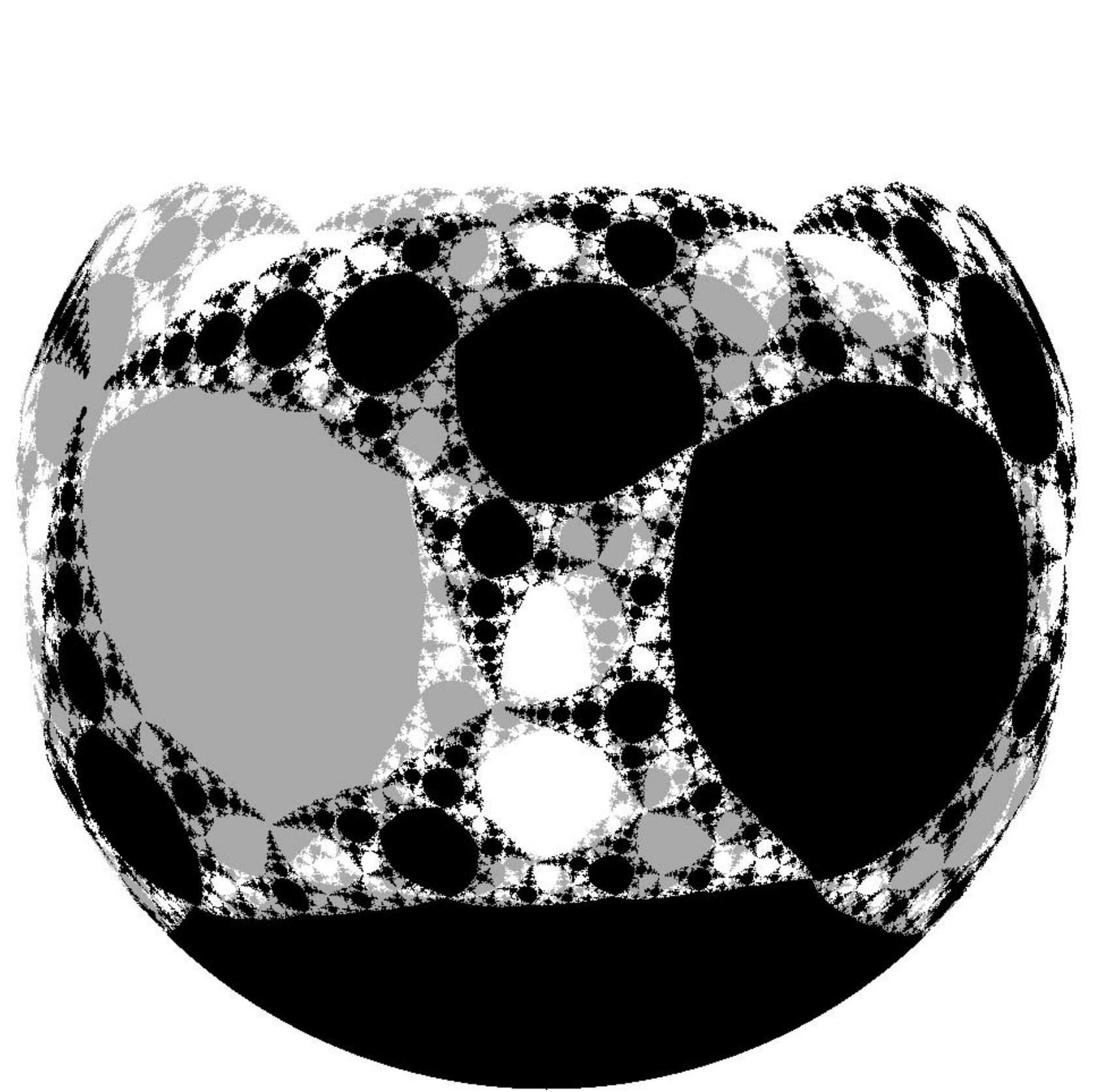}}}

\newcommand{\drawfigrabmaterab}{\scalebox{.25}{\includegraphics{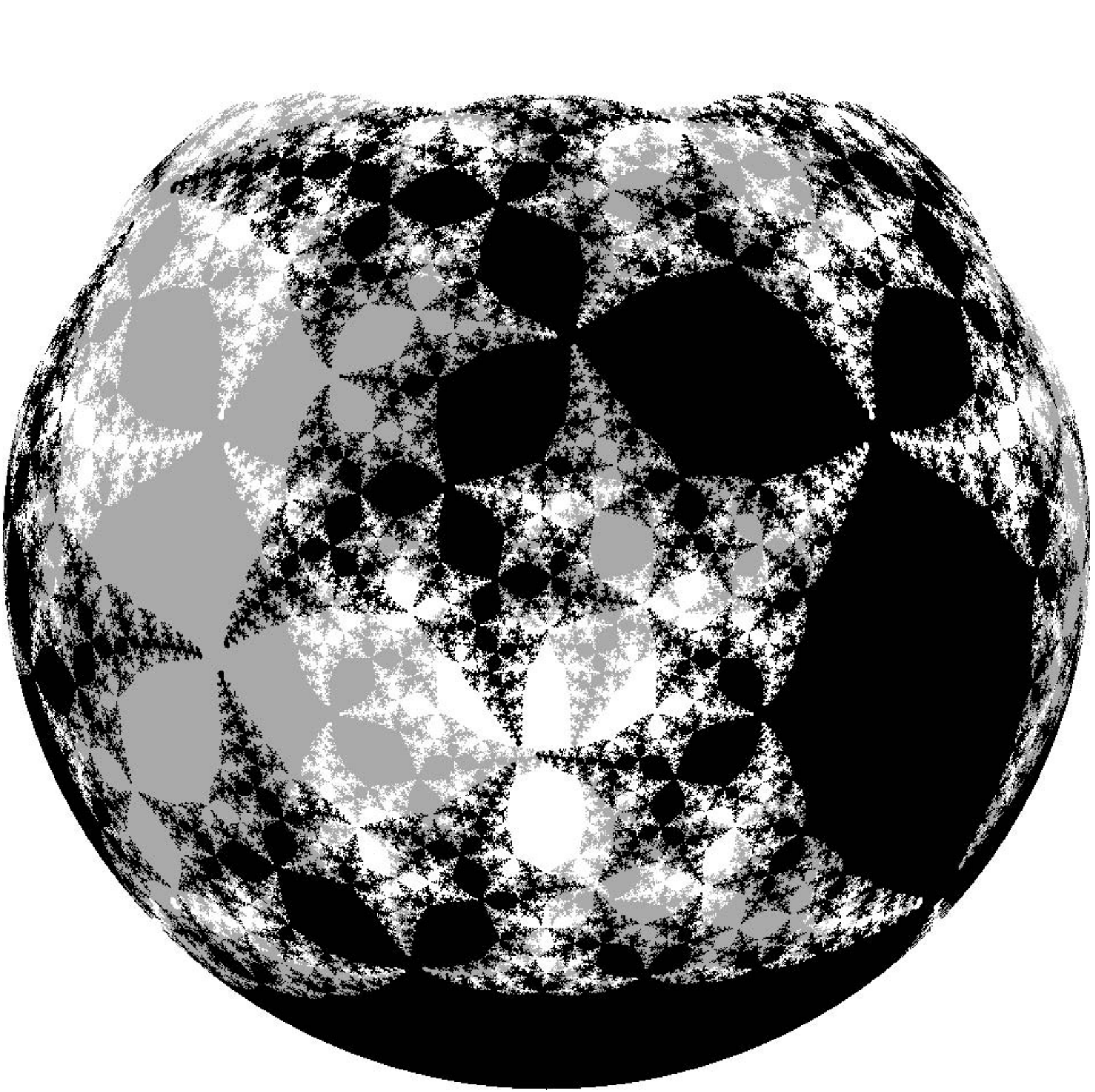}}}

 \newcommand{\drawfigrabMbasinrab}{\scalebox{.25}{\includegraphics{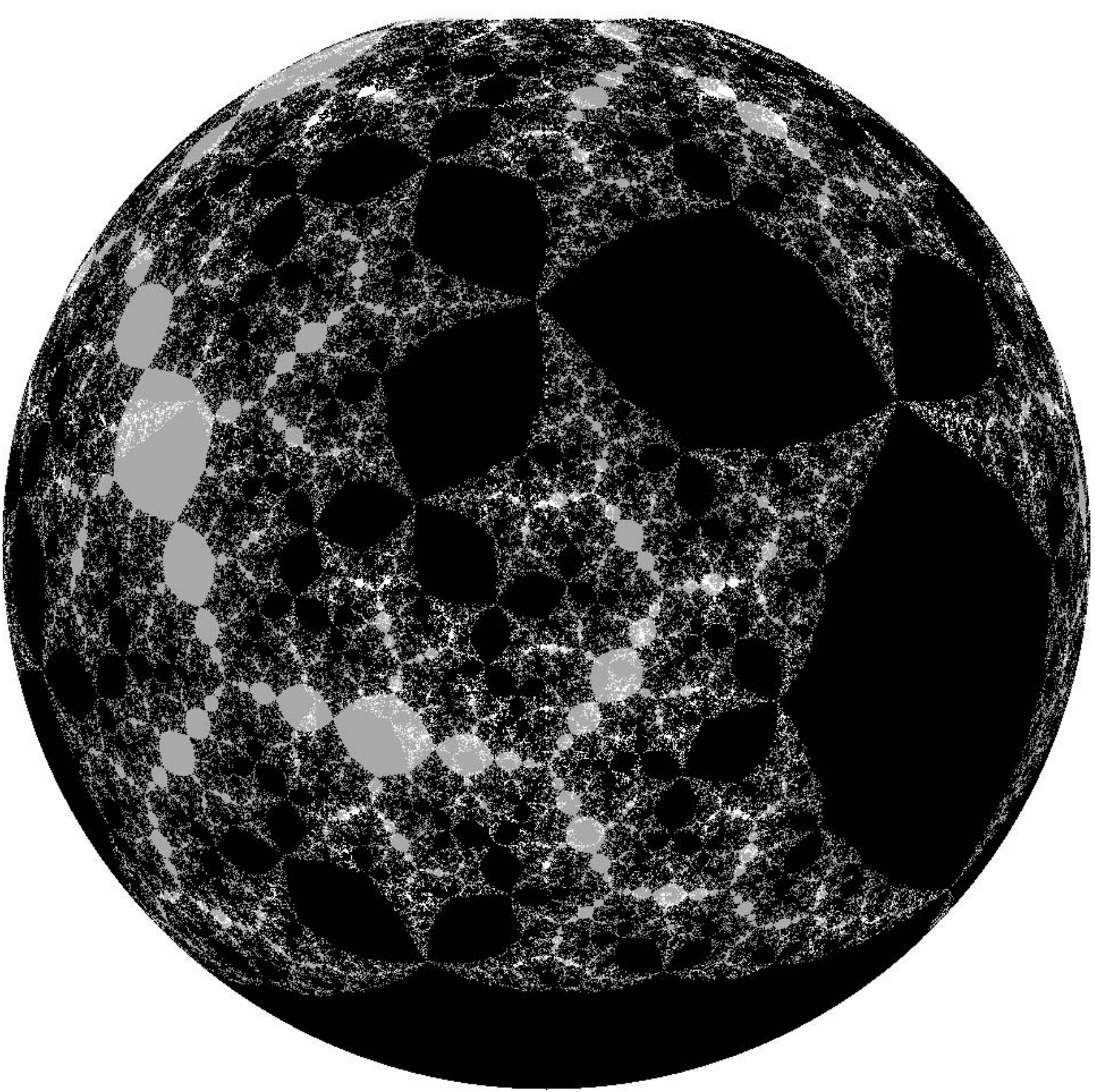}}}

\newcommand{\drawfigrabmateaero}{\scalebox{.25}{\includegraphics{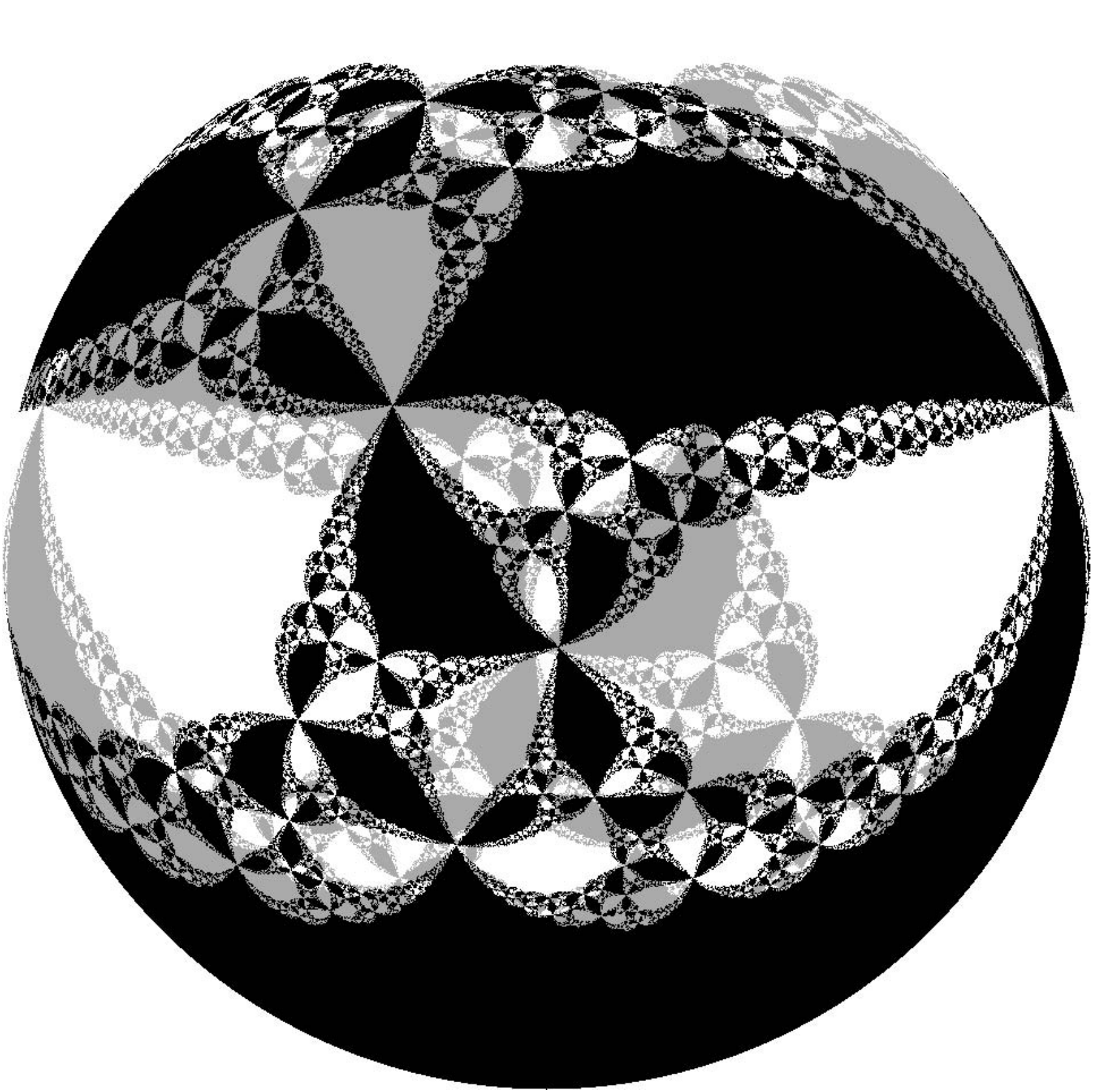}}}

\newcommand{\drawfigbasinrabmateaero}{\scalebox{.25}{\includegraphics{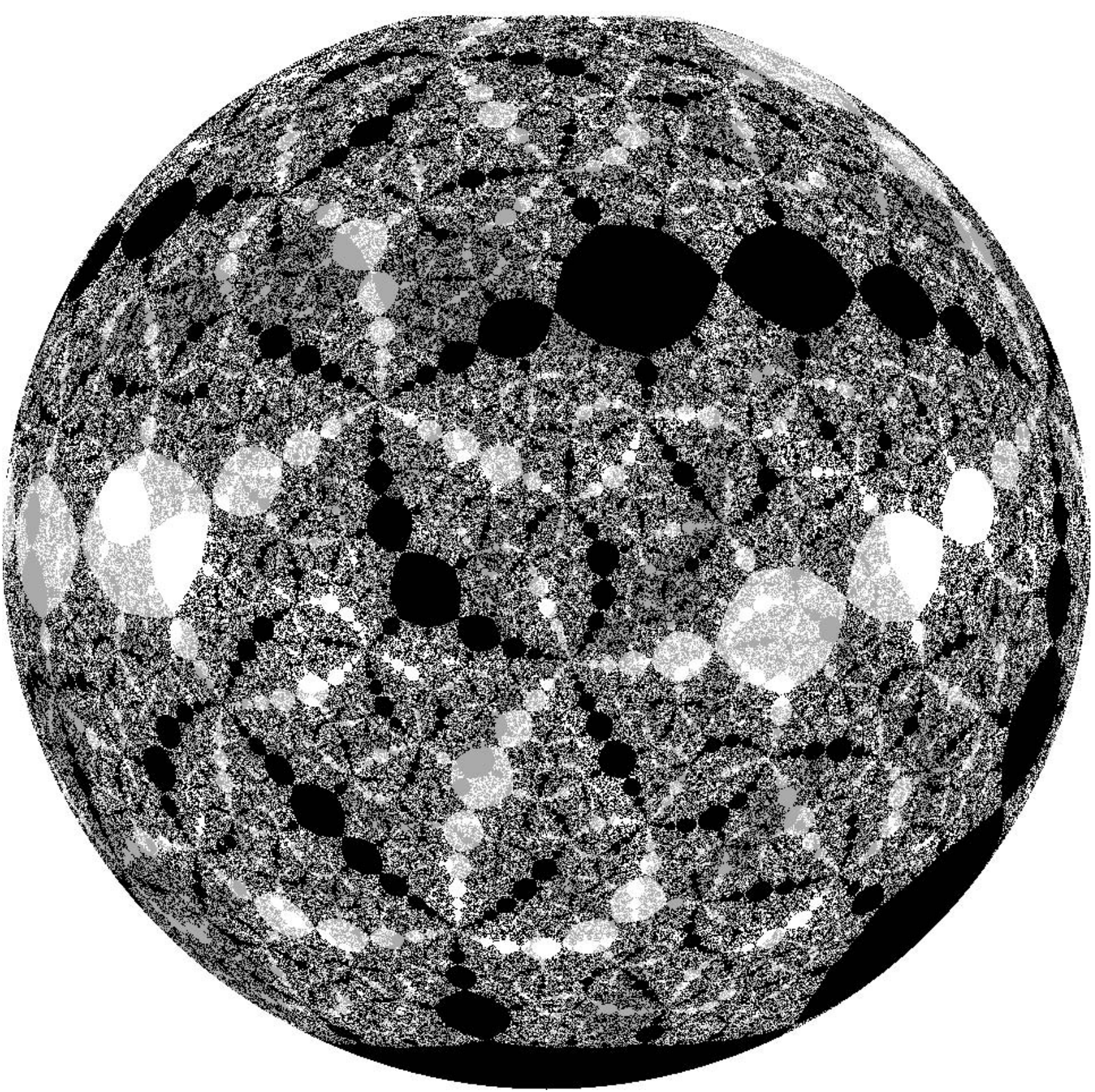}}}

\newcommand{\drawfiglattesone}{\scalebox{.25}{\includegraphics{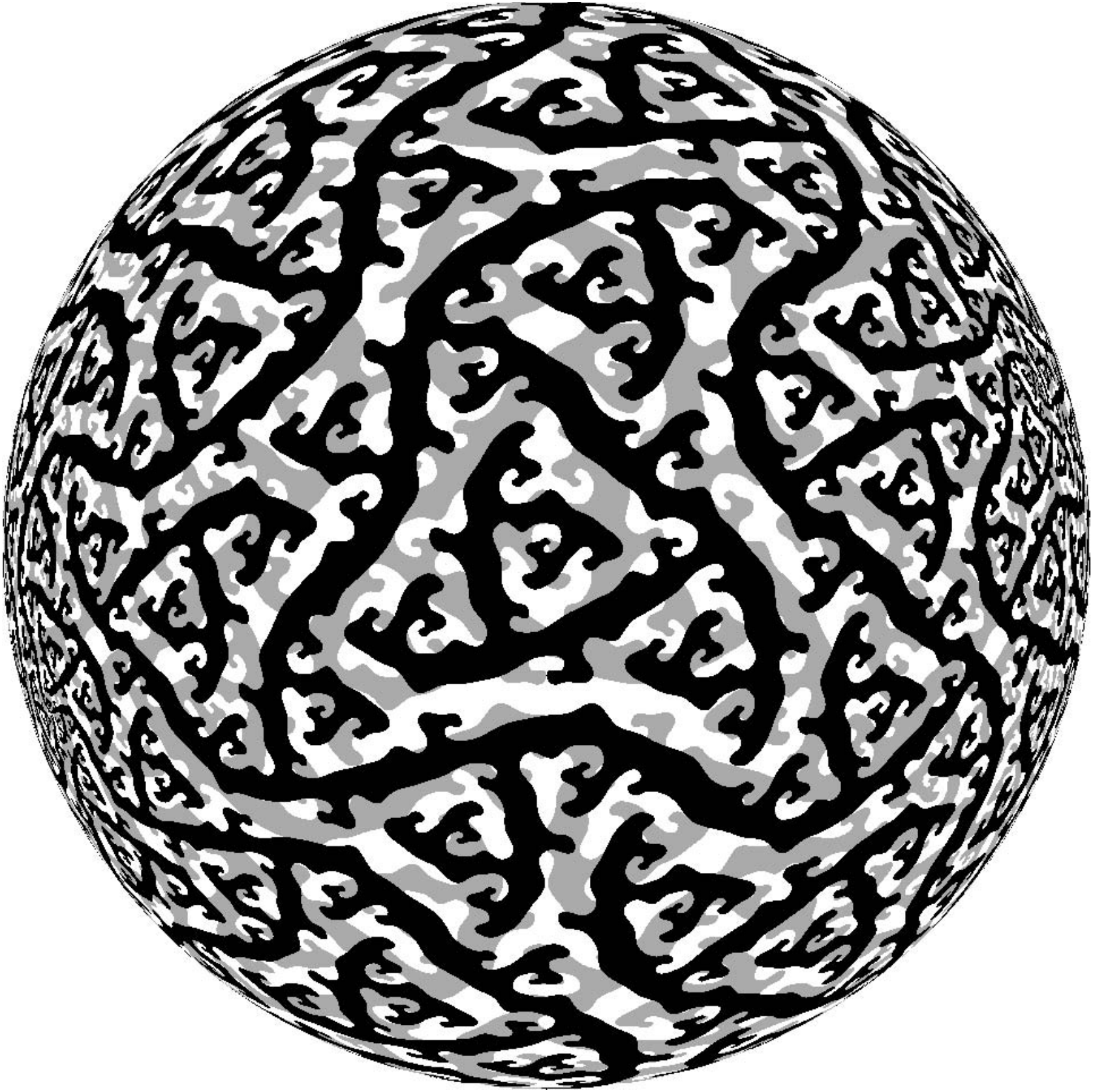}}}
\newcommand{\drawfiglattestwo}{\scalebox{.25}{\includegraphics{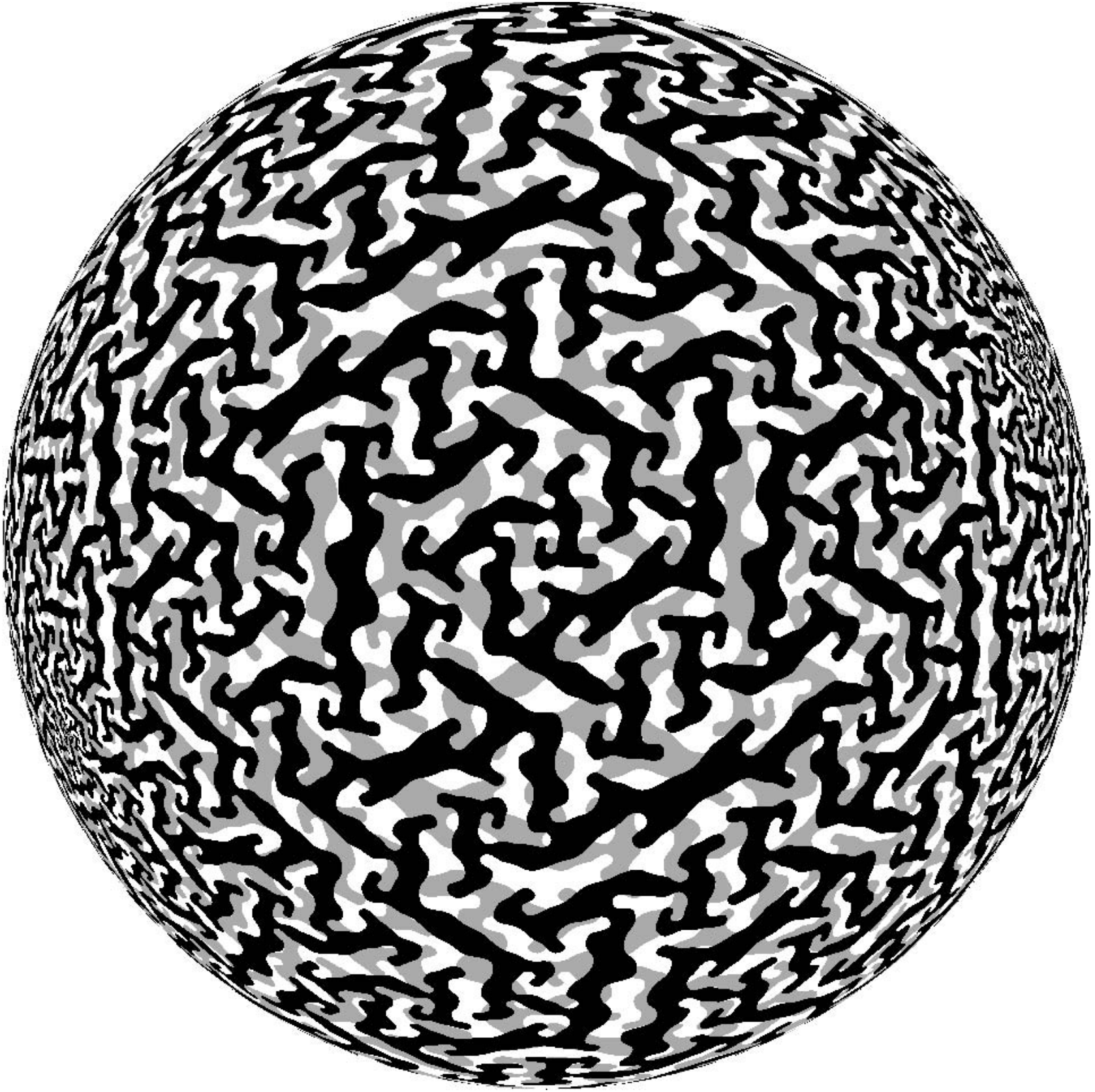}}}
\newcommand{\drawfiglattesthree}{\scalebox{.25}{\includegraphics{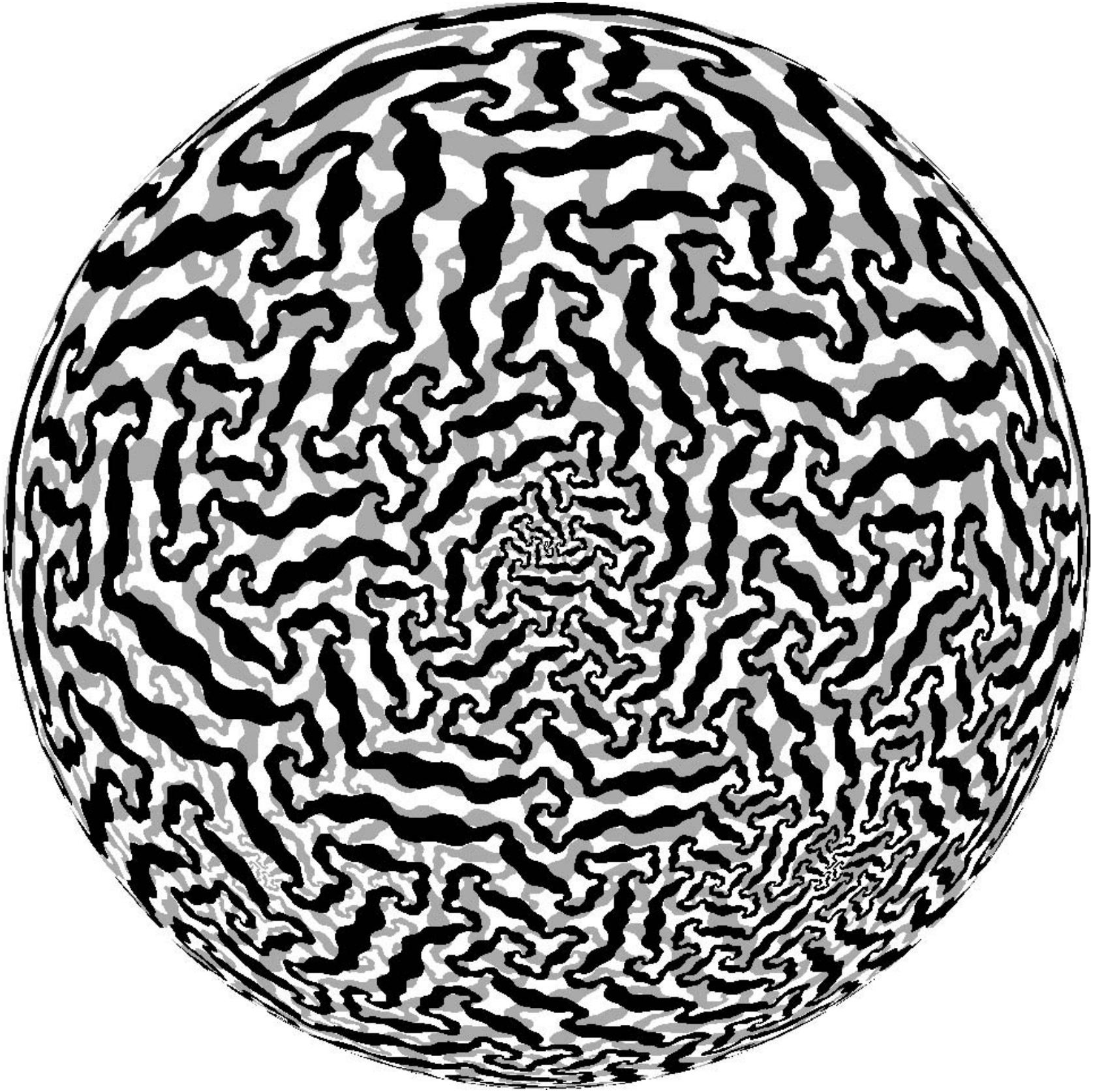}}}
\newcommand{\drawfiglattesfour}{\scalebox{.25}{\includegraphics{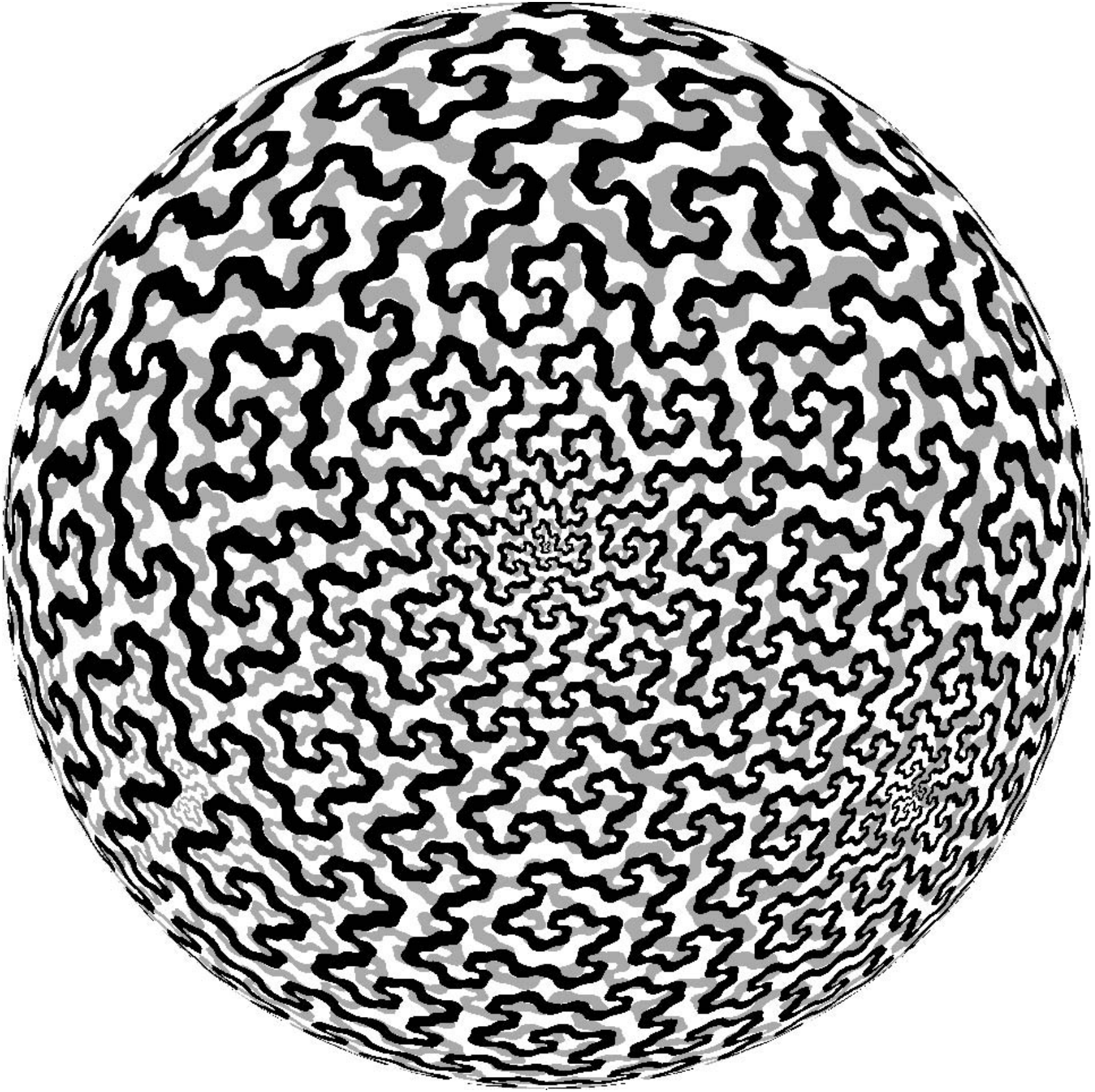}}}

\newcommand{\drawfigbabyrabmatebabyrab}{\scalebox{.25}{\includegraphics{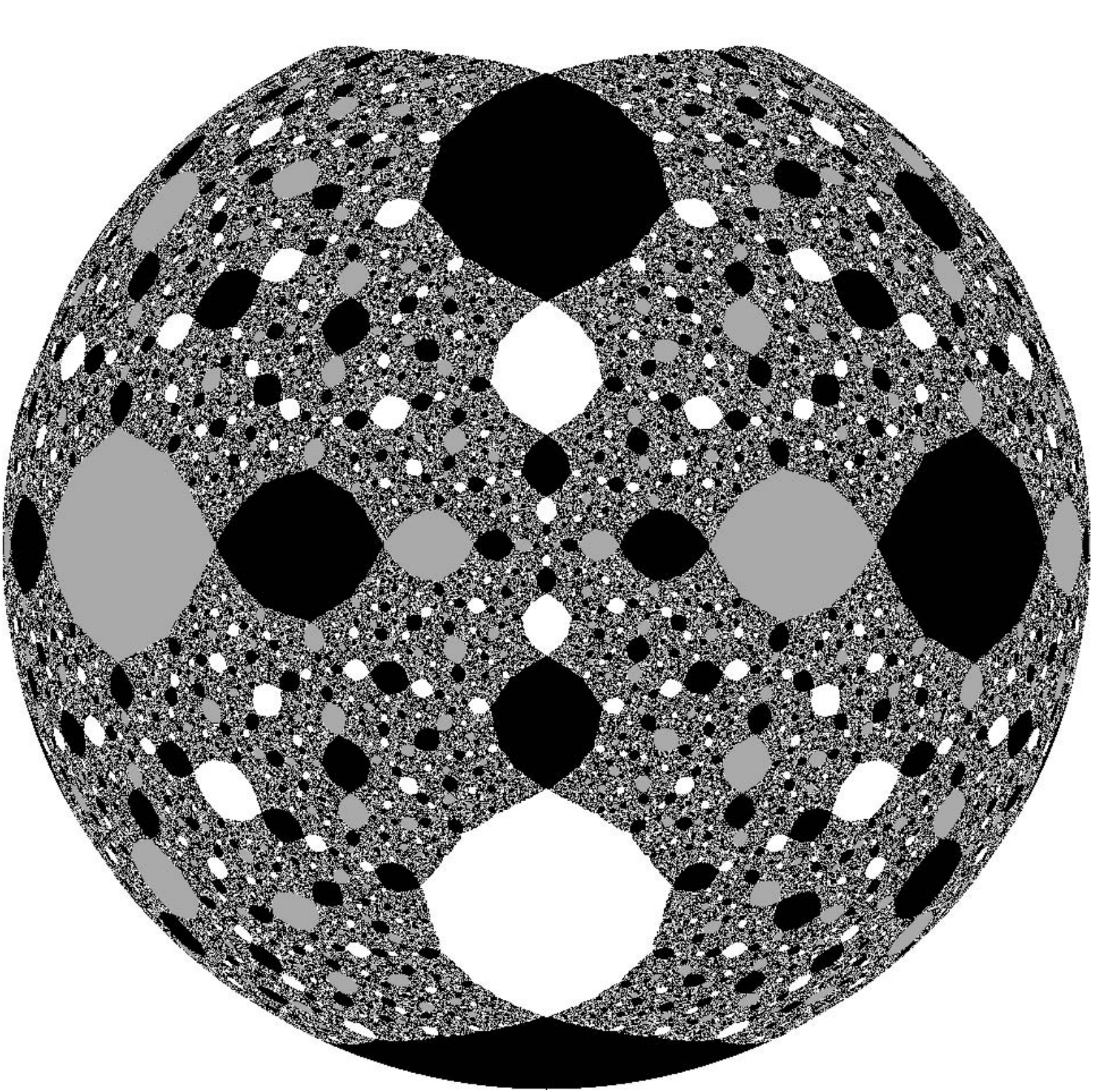}}}
\newcommand{\drawfigbabyrabMline}{\scalebox{.25}{\includegraphics{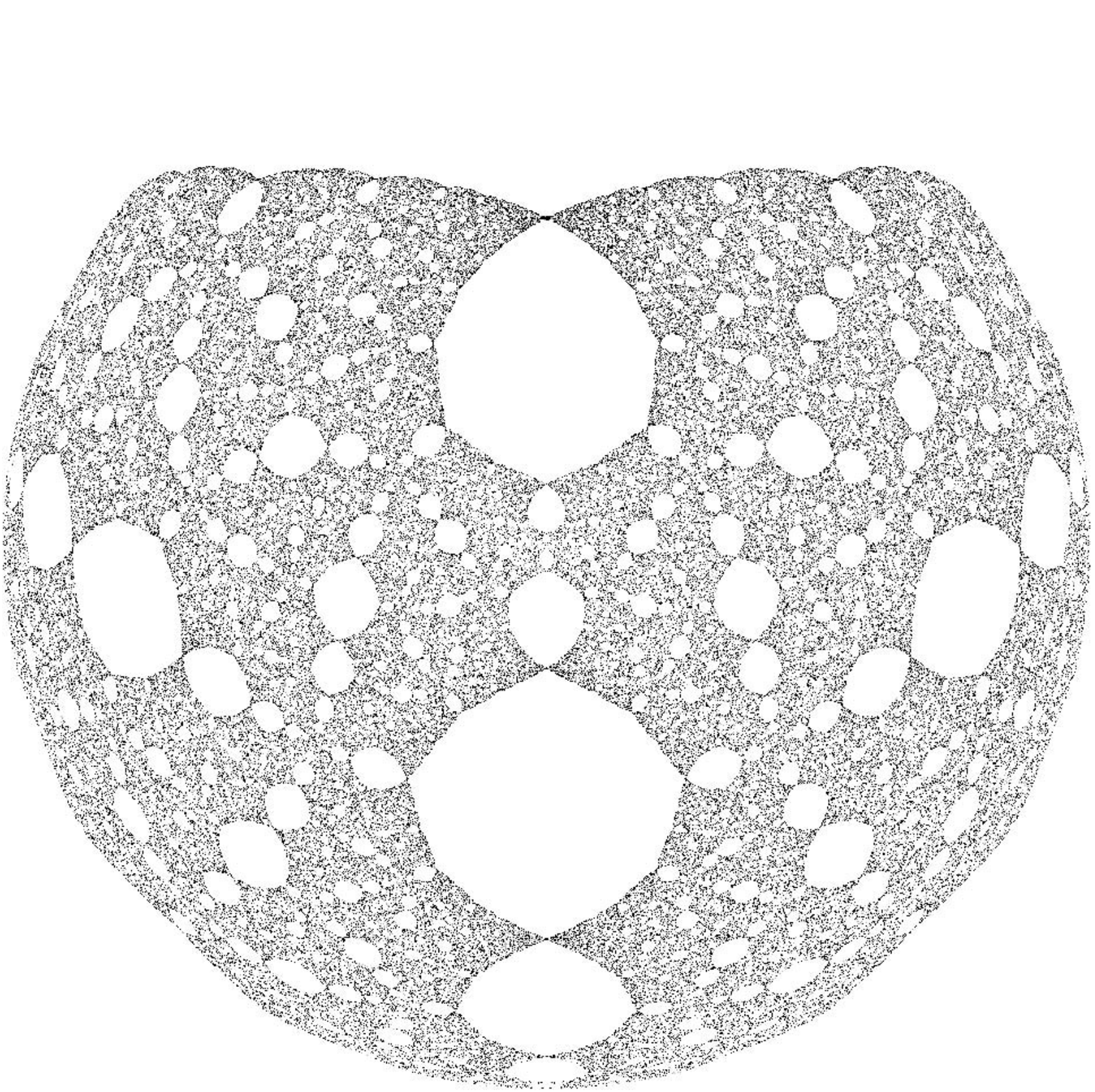}}}
\newcommand{\drawfigbabyrabMlineapprox}{\scalebox{.25}{\includegraphics{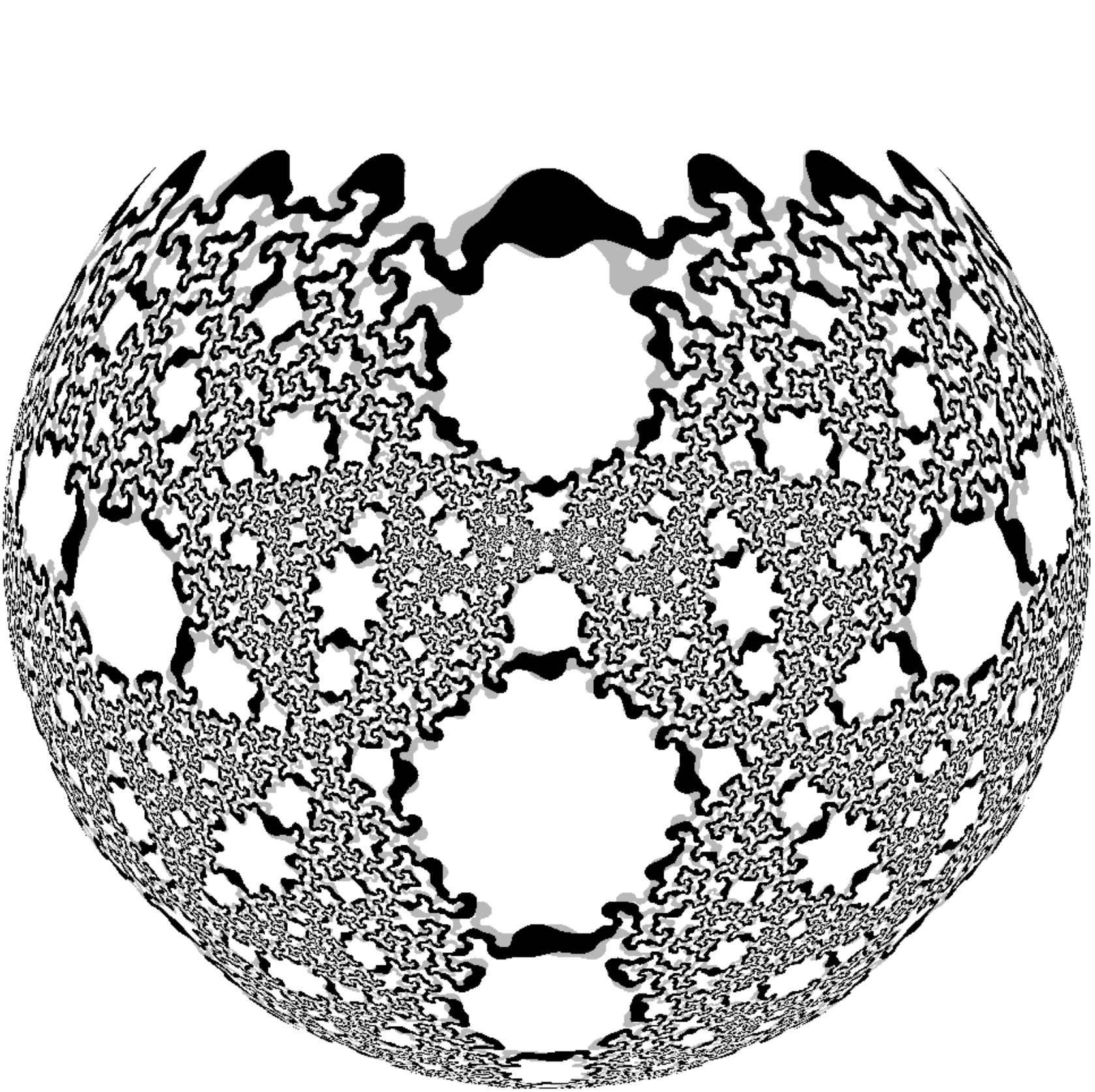}}}

\newcommand{\drawfigseqone}{\scalebox{.2}{\includegraphics{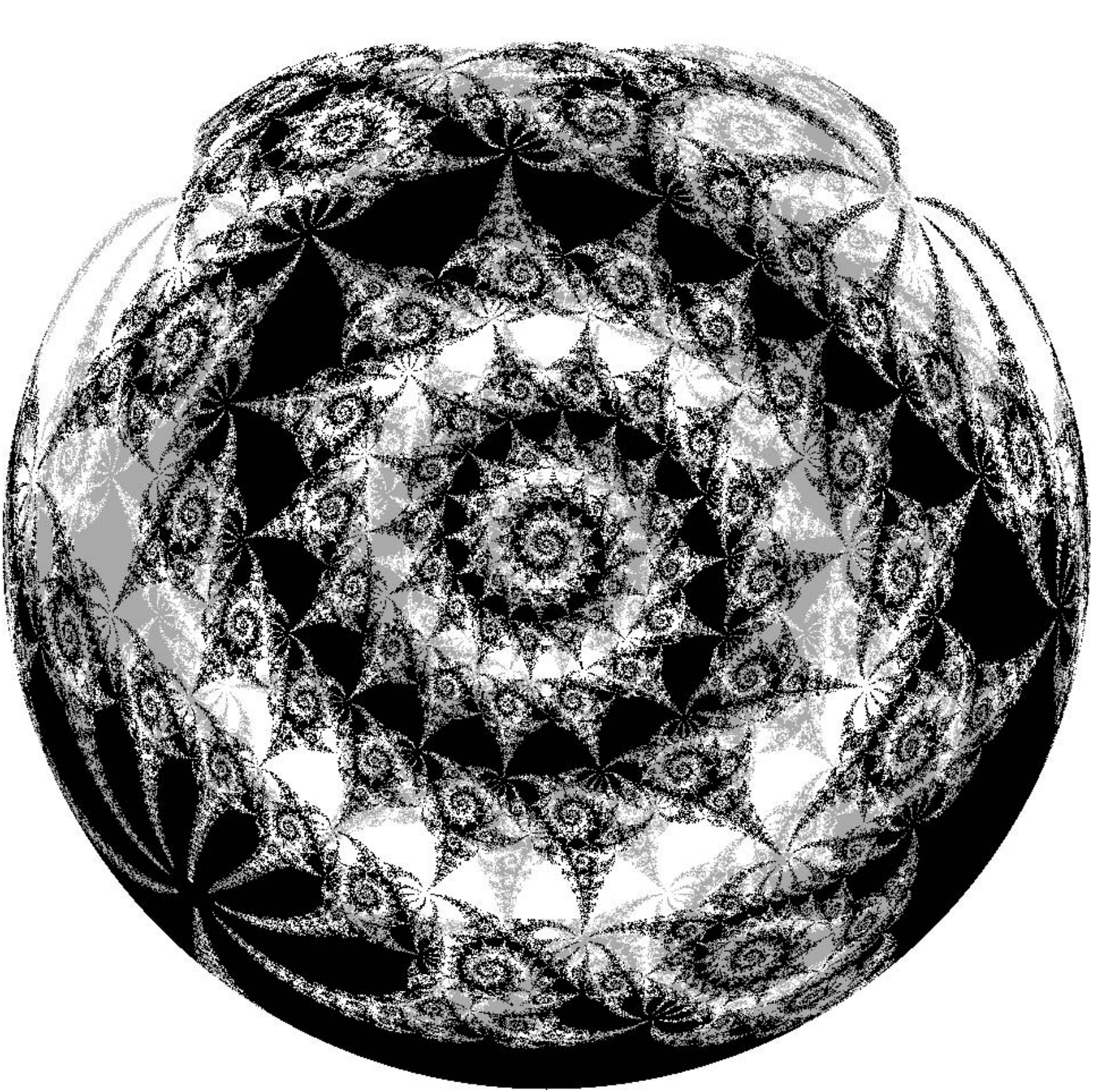}}}

\newcommand{\drawfigseqtwo}{\scalebox{.2}{\includegraphics{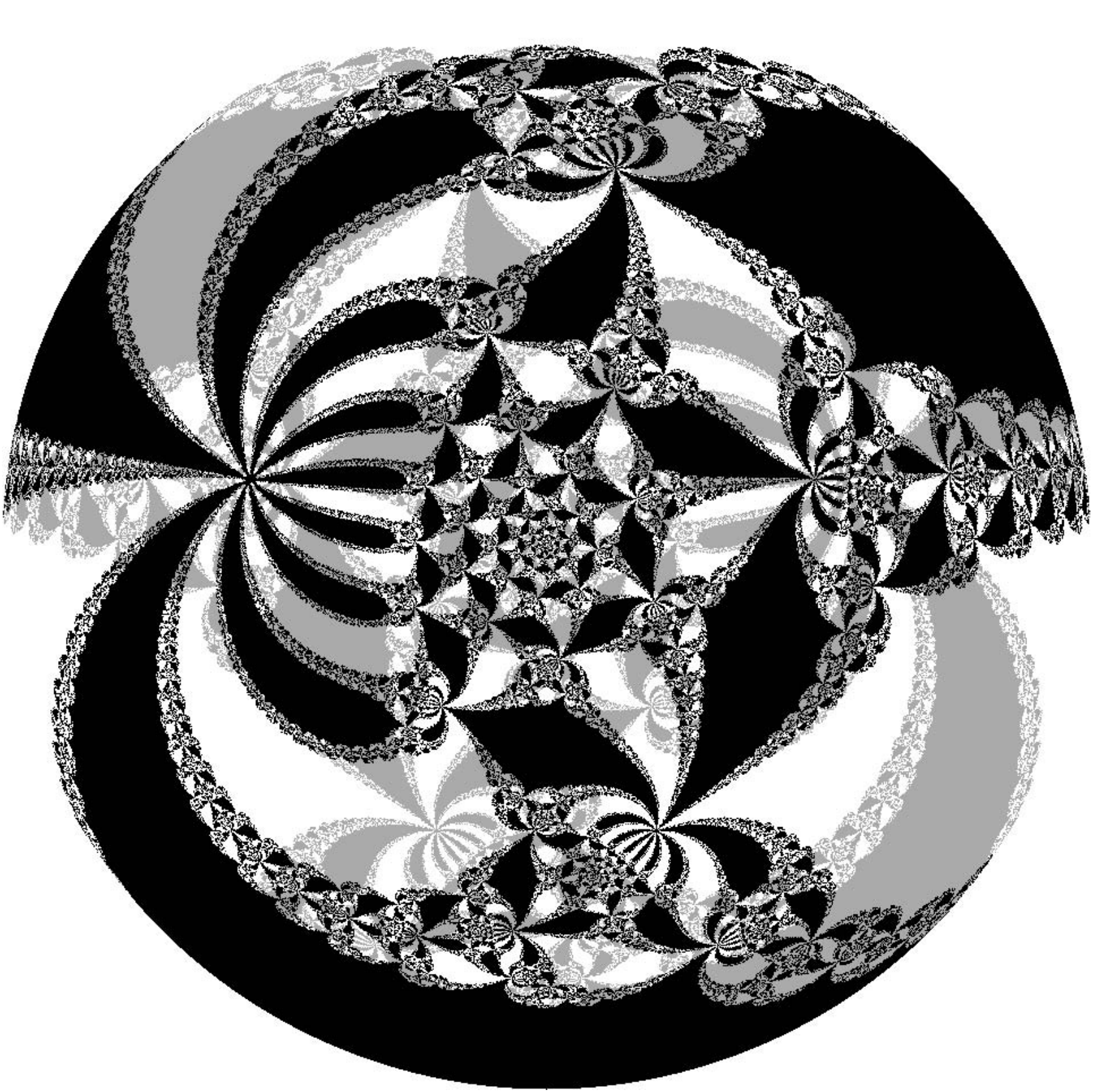}}}

\newcommand{\drawfigseqthree}{\scalebox{.2}{\includegraphics{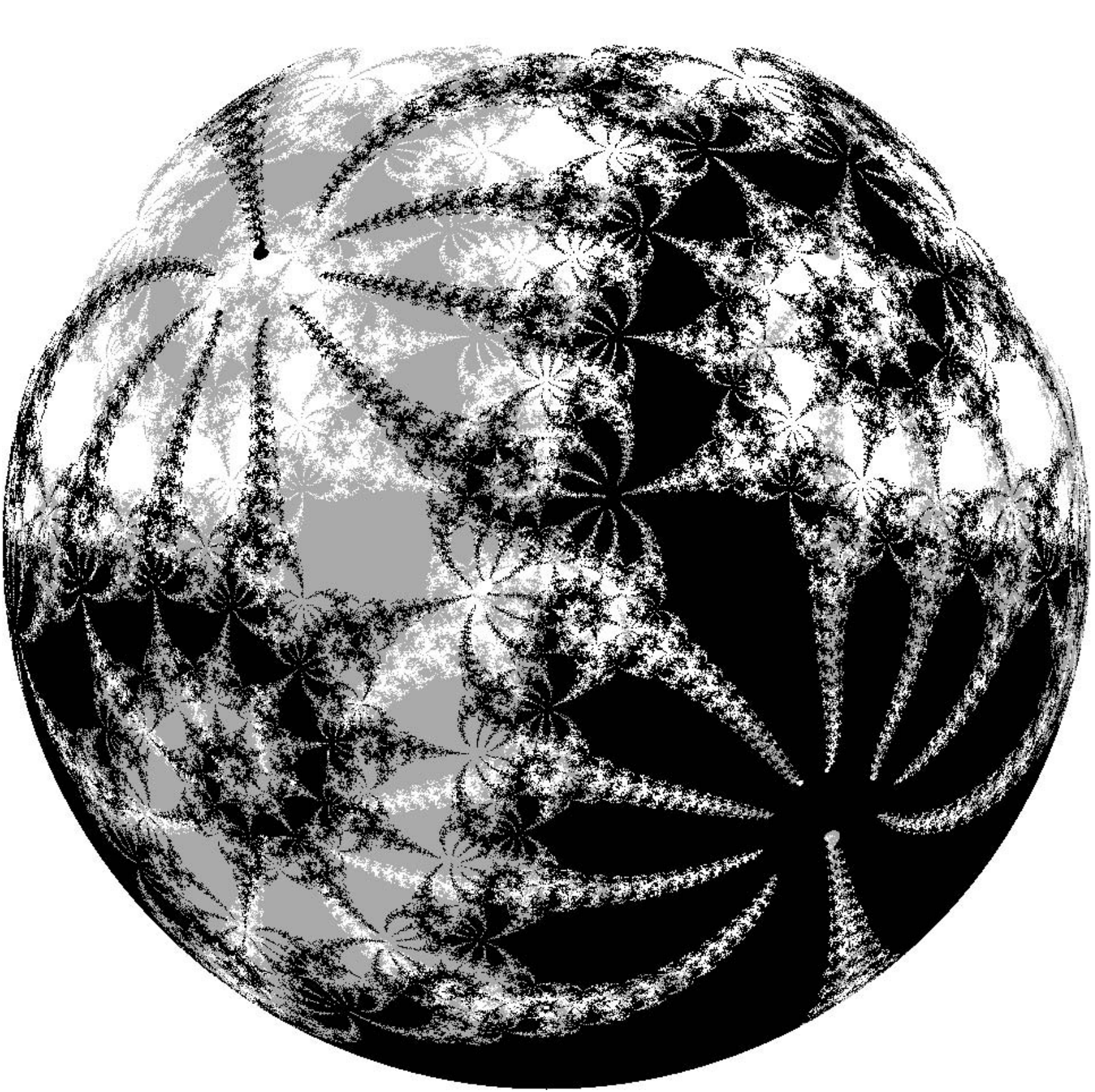}}}


\newcommand{\showcomments}{yes}

\newsavebox{\commentbox}
\newenvironment{comment}%
{\ifthenelse{\equal{\showcomments}{yes}}%
{\footnotemark
    \begin{lrbox}{\commentbox}
    \begin{minipage}[t]{1.25in}\raggedright\sffamily\small
    \footnotemark[\arabic{footnote}]}
{\begin{lrbox}{\commentbox}}}%
{\ifthenelse{\equal{\showcomments}{yes}}%
{\end{minipage}\end{lrbox}\marginpar{\usebox{\commentbox}}}
{\end{lrbox}}}

\newcommand{\hide}[1]{} 

\begin{document}

\title{The Medusa algorithm \\for polynomial matings}

\author[S.H. ~Boyd]{Suzanne Hruska Boyd
}
\address{Department of Mathematical Sciences\\
University of Wisconsin Milwaukee\\
PO Box 413\\
Milwaukee, WI 53201\\
USA}
\email{shruska@msm.umr.edu}

\author[C. ~Henriksen]{Christian Henriksen
}
\address{Department of Mathematics, Build. 303\\
Technical University of Denmark\\
DK -- 2800 Kgs. Lyngby}
\email{christian.henriksen@mat.dtu.dk}

\date{\today}

\begin{abstract}
The {\em Medusa algorithm} takes as input two postcritically finite quadratic polynomials and outputs  the quadratic rational map which is the mating of the two polynomials (if it exists).  Specifically, the output is a sequence of approximations  for the parameters of the rational map, as well as an image of its Julia set. Whether these approximations converge is answered using Thurston's topological characterization of rational maps.   

This algorithm was designed by John Hamal Hubbard, and implemented in 1998 by Christian Henriksen and REU students David Farris, and Kuon Ju Liu. In this paper we describe the algorithm and its implementation, discuss some output from the program (including many pictures) and related questions. Specifically, we include images and a discussion for some shared matings, Latt\`{e}s examples, and tuning sequences of matings.
\end{abstract}

\maketitle

\markboth{\textsc{S. Boyd and C. Henriksen}}
  {\textit{Medusa and matings}}


\section{Introduction}
\label{sec:introduction}

The study of the dynamics of rational maps of the Rieman sphere is greatly facilitated by the fact that a wide variety of dynamical phenomena can be illustrated using only the quadratic family $P_c(z) = z^2 +c$.  Of course most general theorems about rational maps have examples in the quadratic family, but further, in some cases the dynamics of a quadratic polynomial appear within a rational map. The most basic example of this phenomena is through polynomial-like behavior. In addition, there are several ways to combine two (or more) quadratic polynomials to produce rational maps whose dynamics can be described via a combination of the quadratic polynomial dynamics. Probably the first such example was a {\em polynomial mating} discovered by Adrien Douady \cite{Douady1983}.

In order to define matings, first we
 must step back to quadratic polynomials.
It is simple to write a computer program which, given a $c$, will compute (approximately) the orbit of any given point under the quadratic polynomial $P_c$. To illustrate the overall behavior one draws the {\em filled Julia set}, $K_c$, the set of points whose orbit under $P_c$ does not tend to $\infty$.  This also illustrates the {\em Julia set}, $J_c$, the topological boundary of $K$.  (See \S\ref{sec:background}, Figure~\ref{fig:juliarays} for a sample $J_c$.) We may examine experimentally the dynamics of one map at a time with such a program.

The next natural step is to understand how the dynamics changes with a change in the parameter, $c$.
We organize the parameter space by defining ${M}$, the {\em Mandelbrot set}, as the set of all $c$ in $\CC$ for which the Julia set $J_c$ is connected (see \S\ref{sec:background}, Figure~\ref{fig:mandelbrot}).  By Fatou's fundamental dichotomy theorem, this is equivalent to the set of all $c$ such that the orbit of the critical point $0$ under $P_c$ lies in $K_c$.   
Thus it is also a simple matter to generate a picture of ${M}$, and a program which will draw the Julia set $J_c$ when a parameter $c$ in ${M}$ is selected.  After a brief investigation with such a program, one sees intriguing patterns, and a relationship between ${M}$ and the Julia sets of its children, the quadratic polynomials.

In addition to the definition of ${M}$, many basic results in the theory of the iteration of rational functions support the premise that the behavior of the critical orbit is crucial for describing the dynamics.  
 The dynamics are most amenable to  analysis when the polynomial $P_c$ is {\em postcritically finite (PCF)}, i.e., the orbit of the critical point $0$ is finite.  A key technique in giving a mathematical description of the patterns of quadratic polynomials turns out to be combinatorics.  For a postcritically finite quadratic polynomial, we can build a labelled graph, called a {\em spider}, which gives  a combinatorial description of the dynamics of the polynomial.  This is described in \S\ref{sec:backgroundrays}.

The reverse problem, of starting with a combinatorial spider and producing a quadratic polynomial $P_c$ (i.e., producing a parameter $c$) whose dynamics are given by that model, is solved by the {\em spider algorithm}. The spider algorithm is an iterative procedure, based on Thurston's topological characterization of rational maps \cite{DH_T}, and is described fully in \cite{HS-spider}.  

The main subject of this paper is the {\em Medusa algorithm}, which takes two combinatorial spiders, glues them together in a certain manner (hence the name Medusa), then runs a sort of double spider algorithm which, if it converges, produces 
a rational map which is the {\em mating} of the two quadratic polynomials associated with the originally inputted spiders, see Theorem~\ref{thm:main}.


John Hamal Hubbard designed the Medusa algorithm, based on Thurston's theory (\cite{DH_T}) and the foundational theory of polynomial matings developed by Douady, Hubbard, Shishikura, Rees, Tan Lei and others (\cite{Douady1983, Rees1992, TanLei1992, Shish2000}, see \S\ref{sec:backgroundmatings}).  
The computer program implementing the algorithm was written under Hubbard's direction by David Farris, Christian Henriksen and Kuon Ju Liu, in a 1998 \textit{summer research experience for undergraduates program.} The full source code for Medusa is available for download at \cite{CUweb}.

Some progress has been made in the study of  polynomial matings since 1998, however there are still many intriguing questions.  The goals of experimental software like Medusa are to help form conjectural answers to existing questions, as well as inspire new questions.  After explaining the algorithm and implementation, in the final section of this paper we provide several examples of images we created using Medusa, which serve to illustrate and examine several  of the phenomena of matings.  
Specifically, we include images and a discussion for some Latt\`{e}s examples, shared matings, and tuning sequences of matings.
We hope this paper will energize future researchers to study polynomial matings, and we expect Medusa is of service in advancing the field.

\medskip\noindent{\bf Organization of sections.} 
In \S\ref{sec:background} we provide needed prerequisite material on the dynamics of quadratic polynomials and polynomial matings.
In \S\ref{sec:Medusa}, we describe the Medusa algorithm and its implementation, proving Theorem~\ref{thm:main}.  
The final section, \S\ref{sec:examples}, contains examples of output from the program related to a few areas of interest in the study of matings.

\subsection*{Acknowledgements}  The authors thank Dierk Schleicher, Adam Epstien and Tan Lei for inspiring discussions and advice on how to write this paper. All images of Julia sets of quadratic polynomials were generated with the Otis fractal program \cite{Otis}.

\section{Background}
\label{sec:background}

\subsection{Notation}
\label{sec:notation}
We write $\Ch = \CC \cup \{\infty\}$ for the Riemann sphere, i.e., the one point compactification of the complex plane, endowed with the complex structure with respect to which the identity restricted to $\CC$ is a chart, and $z\mapsto 1/z$ a conformal isomorphism. We write $\Stwo$ for $\Ch$ viewed as a topological manifold, i.e., not equipped with a canonical complex structure.

\subsection{Quadratic polynomials and combinatorics}
\label{sec:backgroundrays}


If $K_c$ is connected, then there is a unique conformal isomorphism 
$$
\psi_c \colon \Ch - \overline{\DD} \to \Ch - K_c,
$$ such that $\psi_c'(\infty) = 1.$  This map conjugates $w \mapsto w^d$ to $P_c$.
The curve $\mathcal{R}_t (c)= \mathcal{R}_t= \{ \psi(r e^{2\pi it}) \colon r > 1\}$ is the {\em external ray of angle $t$}.  
For a postcritically finite polynomial the filled in Julia set $K_c$ is locally connected, and then $\psi_c$ extends continuously to the boundary.
If we parameterize the circle by $\RR / \ZZ$, then the map $\psi_c$ on the boundary becomes 
$$\gamma_c \colon \RR / \ZZ \rightarrow J_c,$$
and $\gamma_c$ is a semiconjugacy of multiplication by two to $f$, i.e.,  $\gamma_c(2t) = P_c(\gamma(t))$.
Then $\gamma_c(t)$ is called the {\em landing point} of $\mathcal{R}_t(c)$.
Call $\gamma_c$ the {\em Carath\'{e}odory map of $P_c$}. See Figure~\ref{fig:juliarays} for a picture of a Julia set and some external rays.

\begin{figure}
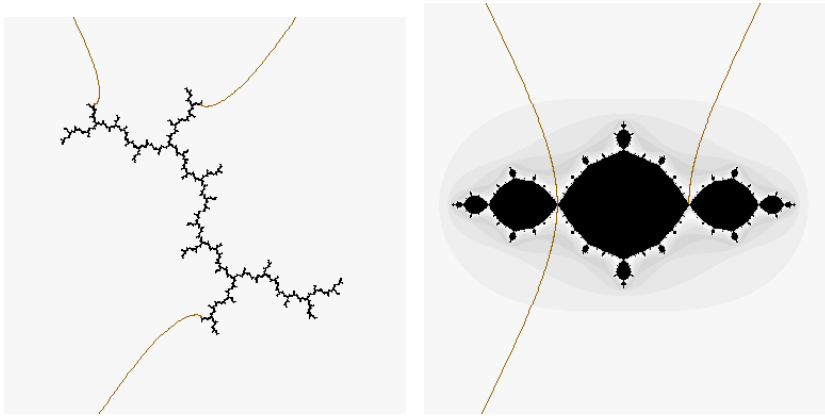

\begin{center}
 \drawfigJonesixth \ \ 
   \drawfigJonethirdonesixth
\caption{\label{fig:juliarays}
Left: the Julia set of $z \mapsto z^2+i$, and critical orbit rays $1/6, 1/3, 2/3$.
Right: the Julia set of $z \mapsto z^2-1$, and critical orbit rays $1/3, 2/3$, plus $1/6$ for comparison.
}
\end{center} 
\end{figure}

Given a postcritically finite quadratic polynomial, $P_c$, choose $\theta_c \in \RR / \ZZ$ so that $\mathcal{R}_{\theta_c}$ is the external ray associated with the critical value, $c$.  That is, $\mathcal{R}_{\theta_c}$ lands at $c$, if $c \in J_c.$ Otherwise the critical point is periodic. If the critical point is fixed, take $\theta_c = 0.$ If the critical point is periodic of period $n>1,$ the critical value is contained in the immediate basin $U$ of a superattracting cycle and there exists a pair of rays landing at the root of $U$ whose closure seperates the critical value from the other points in the critical orbit. Take $\theta_c$ to be one of the two angles corresponding to this pair of rays.

We can use $\gamma_c$ and $\theta_c$ to create a simple combinatorial model of the critical orbit.

Given a rational number $\theta \in \RR / \ZZ$, following Hubbard and Schleicher (\cite{HS-spider}) we define the {\em standard $\theta$-spider, $\Sbb_{\theta} \subset \Ch$} by:
$$
\Sbb_{\theta} = \{ r e^{2 \pi i 2^{j-1} \theta}  \colon r \geq 1, j = 1, 2, \ldots \} \cup \{ \infty \}.
$$
 See the image on the left in Figure~\ref{fig:spider} for an example, it shows the spider for one of the Julia sets of Figure~\ref{fig:juliarays}.
One may view this as a spider, with legs the rays emanating from the unit circle which are in the orbit of $\theta$ under angle doubling, and body the point at infinity.  

Since $\gamma_c$ semi-conjugates $P_c$ to angle doubling, $\gamma_c$ maps $\Sbb_{\theta_c}$ to the union of $\mathcal{R}_{\theta_c}$ and its images under $P_c$, plus the point at infinity.  Note if $\theta$ is rational, then it has finite orbit under angle doubling, so the spider has a finite number of legs. Similarly, if $P_c$ is postcritically finite, then $\theta_c$ will be rational.
We denote the endpoints on the unit circle of the spider legs by $z_j = e^{2 i \pi 2^{j-1} \theta}$.


\begin{figure}
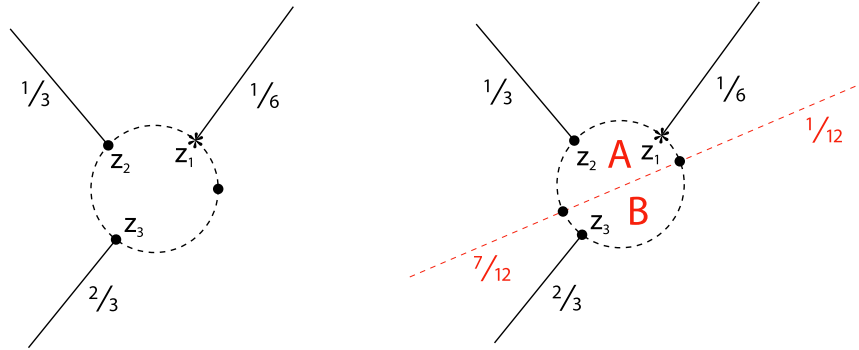

\begin{center}
   \drawfigspiders
\caption{\label{fig:spider}
Left: the spider for $\theta = 1/6$.  The critical orbit is $(1/6 \mapsto 1/3 \mapsto 2/3 \mapsto 1/3)$.  Right: the kneading sequence for this spider is $K(1/6) = A \ \overline{AB}$.  This spider models $f(z) = z^2+i$, whose Julia set is shown in Figure~\ref{fig:juliarays}.}
\end{center}
\end{figure}

The spider illustrates the critical orbit.  Using this diagram we can also create a sequence called the {\em kneading sequence of $\theta$} which records information about the order of the critical orbit in this diagram.  Take the plane containing the spider $\Sbb_{\theta}$, and cut along the line composed by the rays of angle $\theta/2$ and $(\theta+1)/2$.  Label by $A$ the open half of the plane containing $\theta$, label the other open half $B$.  See the right hand image of Figure~\ref{fig:spider}.
Label the ray of angle $\theta$ by $*_a$, and the ray of angle $(\theta+1)/2$ by $*_b$.  For any angle $t$, its $\theta$-itinerary is the infinite sequence of labels from $(A, B, *_a, *_b)$ corresponding to the position in the labelled plane of the points in the forward orbit of $t$ under angle-doubling.  The {\em kneading sequence of $\theta$}, denoted $k(\theta)$, is the $\theta$-itinerary of the angle $\theta$.  Note a symbol $*_n$ appears in this sequence if and only if $\theta$ is periodic under angle doubling.  


In this paper, we are interested in combining and comparing quadratic polynomials.  In order to keep track of the dynamics of various maps we are studying, we use the discovery of
Douady and Hubbard (see \cite{DH1}) on how $\theta_c$ relates to the position of $c$ in the Mandelbrot set.
They show the Mandelbrot set, ${M}$, is connected, with simply connected complement in $\Ch$, hence
there is a unique conformal isomorphism
$\Psi_M \colon \Ch - {M} \rightarrow \Ch - \overline{\DD}$ which fixes $\infty$ and such that $\Psi_M'(\infty)=1$.   Then $\Psi_M$ defines external rays outside of ${M}$, by images of straight rays outside of the disk.  It happens that for any rational angle $\theta = p/q$, the map $\Phi_M$ extends radially to the boundary, to define a landing point $c(\theta)$ for the ray of angle $\theta$. Given a postcritically finite polynomial $P_c$ to which we associate the angle $\theta_c,$ then the parameter ray of angle $\theta_c$ will either land at $c$ (in the preperiodic case) or at the root of the hyperbolic component of $M$ that has $c$ as a center (in the periodic case).
%
For example, for the basilica, $f (z) = z^2 - 1$, the external rays associated with the critical value $-1$ is of angle $1/3$ and $2/3$.  The parameter rays of angle $1/3$ and $2/3$ lands on the Mandelbrot set at the root point of the bulb containing the basillica (the real bulb). Figure~\ref{fig:mandelbrot} shows the Mandelbrot set and some external rays.


\begin{figure}
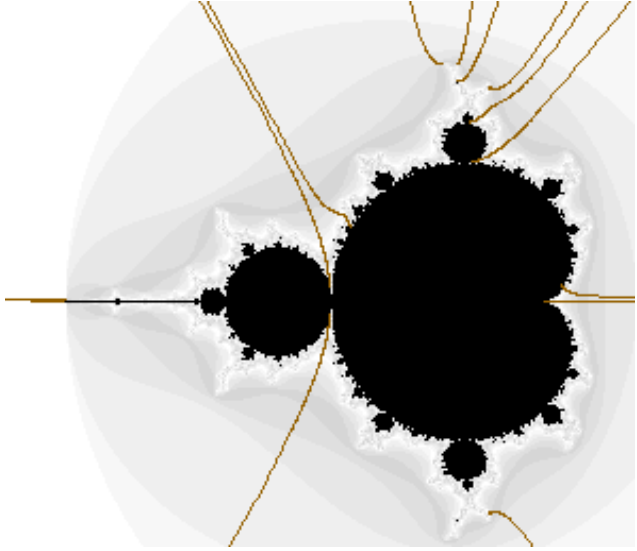

\begin{center}
   \drawfigmandel
\caption{\label{fig:mandelbrot}
The Mandelbrot set,  i.e., the set of all $c$ in $\CC$ for which the Julia set $J_c$ is connected, shown above in black, together with the external rays: $0, 1/511, 1/7, 10/63, 1/6, 3/14, 1/5, 1/4, 169/511, 1/3, 255/511, 1/2, 2/3, 5/6.$
}
\end{center}
\end{figure}


\subsection{Mating quadratic polynomials}
\label{sec:backgroundmatings}

Let $f_n(z) = z^2 + c_n, n=1,2$ be two quadratic polynomials, with Julia sets $J_n$. Assume each $J_n$ is locally connected, and  $\gamma_n$ is the Carath\'{e}odory map of $f_n$. 
Define $
\mathcal{K}
 = K_1 \sqcup K_2 / \backsim$ to be the quotient space of the disjoint union of $K_1$ and $K_2$ in which for each $t \in \RR / \ZZ$, we identify $\gamma_1(t)$ with $\gamma_2(-t)$.  In other words, we obtain a topological space $\mathcal{K}$ by gluing $K_1$ and $K_2$ together along their boundaries via $\gamma_1(t) \backsim \gamma_2(-t)$.   Consider this definition while viewing Figure~\ref{fig:juliarays}. In general one might imagine $\mathcal{K}$ as some bizarre balloon animal (possibly with infinitely many body segments), but we will see below that in many cases, $\mathcal{K}$ is simply a sphere.  
 
On the space $\mathcal{K}$, define the map $f_1 \mate f_2$ by $f_n$ on $K_n, n=1,2$.  Since $\gamma_n$ semiconjugates $f$ to multiplication by two on $J_n$, this map is well-defined and continuous (no matter how bizarre the space $\mathcal{K}$ may be).

If there is a quadratic rational map $F$ which is topologically conjugate on $\Ch$ to $f_1  \mate f_2$ on $\mathcal{K}$, then $F$ is called a {\em mating} of $f_1$ and $f_2$. We denote this relationship by $F \cong f_1 \mate f_2$, and in this case say the mating of $f_1$ and $f_2$ {\em exists}.  The conjugacy $h \colon \mathcal{K} \to \Ch$ is required to be an orientation preserving homeomorphism which is holomorphic on the interiors of each $K_n$.  It is believed that if $F$ exists, it is unique up to M\"{o}bius conjugation.

Note that a mating of any quadratic polynomial $f_1$ with $f_2(z) = z^2$ yields $F \cong f_1$.

Results of Rees, Shishikura, and Tan Lei (\cite{Rees1992, TanLei1992, Shish2000}) show that 
whether the mating of two PCF quadratic polynomials $f_1$ and $f_2$ exists 
can be answered in terms of the location of $c_1$ and $c_2$ in parameter space.  
The fundamental existence theorem is:

\begin{thm} 
\label{thm:mateexist}
If $f_1, f_2$ are PCF quadratic polynomials, TFAE:
\begin{itemize}
\item $\mathcal{K}$ is homeomorphic to the sphere $S^2$;
\item there exists a quadratic rational map $F$ which is the mating of $f_1$ and $f_2$;
\item $c_1$ and $c_2$ do not belong to complex conjugate limbs of the Mandelbrot set, ${M}$.
\end{itemize}
\end{thm}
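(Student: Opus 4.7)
The plan is to prove the three conditions are equivalent in a cyclic fashion, with the analytic heart of the argument being an application of Thurston's topological characterization of rational maps \cite{DH_T}. The implication $(2)\Rightarrow(1)$ is immediate: if $F\colon\Ch\to\Ch$ is topologically conjugate on $\Ch$ to $f_1\mate f_2$ on $\mathcal{K}$ via a homeomorphism $h\colon\mathcal{K}\to\Ch$, then $\mathcal{K}$ inherits the topology of $\Ch\cong\Stwo$. So the real work is $(1)\Rightarrow(2)$ and establishing the combinatorial criterion $(3)$.

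For $(3)\Rightarrow(2)$, I would build the \emph{formal mating} as a branched self-cover of $\Stwo$ as follows. Take two closed round disks $\overline{\DD}_1,\overline{\DD}_2$, embed each $K_n$ inside the corresponding disk, and glue along boundary circles by $t\sim -t$ to obtain a topological sphere $\Sigma$. Extend each $f_n$ over the complementary annulus by the model $z\mapsto z^2$ on the unit circle; the gluing is compatible because $\gamma_n$ semiconjugates angle-doubling to $f_n$, and the identification is preserved by $t\mapsto 2t$ on both sides. This produces a postcritically finite degree-$2$ orientation-preserving branched cover $g\colon\Stwo\to\Stwo$ with two critical points whose orbits are finite. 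By Thurston's theorem, $g$ is combinatorially equivalent to a rational map iff it admits no Thurston obstruction, i.e. no $g$-stable multicurve of essential simple closed curves in $\Stwo\setminus P_g$ whose associated transition matrix has leading eigenvalue $\geq 1$. The key step, due to Rees, Shishikura, and Tan Lei, is that for formal matings of quadratic polynomials any such obstruction can be reduced to a \emph{Levy cycle}, and a Levy cycle must come from a ray-equivalence class of a very particular shape — one produced by pairs of periodic external rays in $J_1$ and $J_2$ whose landing points glue to form a non-trivial loop surrounding points in the postcritical set on each side. A careful analysis of the combinatorics of external angles shows that the rays carrying such a cycle exist precisely when $c_1$ and $c_2$ lie in complex conjugate limbs of $M$. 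Hence $(3)$ rules out the obstruction, Thurston's theorem produces a rational $F$, and the resulting conjugacy identifies $F$ with $f_1\mate f_2$, giving $(2)$.

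For $(1)\Rightarrow(3)$, I would argue contrapositively: assuming $c_1$ and $c_2$ lie in complex conjugate limbs, use the explicit landing pattern of the parameter rays bounding those limbs to exhibit external rays in $J_1$ and $J_2$ whose gluing forces the quotient $\mathcal{K}$ to contain a loop that does not bound a disk, hence $\mathcal{K}$ is not simply connected and in particular not homeomorphic to $\Stwo$. Concretely, the Levy cycle from the previous paragraph corresponds under the semiconjugacy $\gamma_1\sqcup\gamma_2$ to a nontrivial identification pattern on the boundary circles that produces either pinching of $\mathcal{K}$ to a topologically nontrivial space or a direct failure of the Hausdorff/quotient space to be a topological manifold at the identified points.

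The main obstacle is the Thurston-obstruction analysis used to pass between condition $(3)$ and the absence of a Levy cycle for the formal mating. All the combinatorial bookkeeping — matching ray angles, tracking their orbits under angle doubling, and showing that a $g$-invariant multicurve in $\Stwo\setminus P_g$ forces a complex-conjugate-limb relation — is delicate and is precisely the content of Tan Lei's matching theorem and Shishikura's reduction to Levy cycles. Once that correspondence is in hand, Thurston's theorem does the rest, and $(1)\Leftrightarrow(2)\Leftrightarrow(3)$ follows.
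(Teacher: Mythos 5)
First, a point of comparison: the paper does not prove this theorem at all. It is stated as ``the fundamental existence theorem'' and attributed outright to Rees, Shishikura, and Tan Lei \cite{Rees1992, TanLei1992, Shish2000}, so there is no in-paper argument to measure your proposal against. What you have written is a sketch of the standard proof from those references, and its overall architecture --- formal mating as a PCF degree-two branched cover, Thurston's characterization, reduction of obstructions to Levy cycles, and Tan Lei's combinatorial identification of Levy cycles with the complex-conjugate-limb condition --- is the right one. The cyclic scheme $(3)\Rightarrow(2)\Rightarrow(1)\Rightarrow(3)$ is logically complete, and $(2)\Rightarrow(1)$ is indeed immediate from the paper's definition of mating.

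There is, however, a genuine gap in your $(3)\Rightarrow(2)$ step, at the sentence ``Thurston's theorem produces a rational $F$, and the resulting conjugacy identifies $F$ with $f_1\mate f_2$.'' Thurston's theorem only yields a rational map $F$ that is \emph{Thurston equivalent} to the formal mating $g$, i.e.\ related to it by a pair of homeomorphisms isotopic rel the postcritical set; it does not produce a topological conjugacy, and certainly not the map $h\colon\mathcal{K}\to\Ch$, holomorphic on the interiors of the $K_n$, that the paper's definition of mating requires. Upgrading Thurston equivalence of the formal mating to topological conjugacy with the \emph{topological} mating on $\mathcal{K}$ is precisely the main content of Shishikura's paper: one must show that the Thurston pullback iteration collapses exactly the ray-equivalence classes, and one needs Moore's theorem on upper semicontinuous decompositions of $\Stwo$ to conclude that collapsing those classes yields a sphere at all (each class must be shown to be closed, connected, and non-separating). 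Without this step you have not produced the conjugacy demanded by condition $(2)$, and since your proof of $(1)$ is derived from $(2)$, that implication is incomplete as well. A secondary, smaller issue: in $(1)\Rightarrow(3)$ you assert that a conjugate-limb Levy cycle ``forces $\mathcal{K}$ to contain a loop that does not bound a disk''; what actually happens is that the periodic ray-equivalence class forms a separating loop in the formal sphere whose collapse pinches $\mathcal{K}$ into something like a wedge of spheres (or a non-Hausdorff quotient), and this should be said precisely rather than offered as two alternatives.
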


We refer the reader to Milnor's book \cite{Mil-book} for detailed background on the dynamics of polynomial maps of $\CC$, and his article \cite{Mil-pasting} for a more complete discussion of the  definition of mating and its subtleties, a discussion of many foundational results on matings, and a detailed analysis of an interesting example of mating.

\section{From Thurston's Algorithm to the Medusa Algorithm}
\label{sec:Medusa}

Thurston's algorithm is a proof that given a branched covering $g$ of the sphere there exists a rational map $F$ that is Thurston equivalent to $g$ unless there exists a Thurston obstruction. The proof can be made into an iterative procedure computing a sequence of complex structures and rational maps $F_n$ which, when properly normalized, converges to $F$. In this section we see that we can take $g$ to be a model of the mating of two quadratic rational maps, and extract finite dimensional but crucial information about the complex structures produced by Thurston's Algorithm so that the sequence $F_n$ can be recovered. This is the heart of the Medusa Algorithm. Because of the finite dimensional information needed to run the algorithm, it lends itself to actual computation.

\subsection{The Theory}

\subsection*{Normalizing matings}
Assume $f_1, f_2$ are postcritically finite quadratic polynomials and $F \cong f_1  \mate f_2$.
Each $f_n$ has one critical point $0$, which lies in $K_n$.  Thus $F$ has two distinct critical points. By conjugating $F$ with a mobius transformation we can arrange that the critical point coming from $f_1$ is at the origin, the other critical point at infinity and the two glued-together beta fixed points are at $1.$
Therefore we know that any such mating belongs to the following family of maps.

\begin{notation} We normalize the rational maps which are matings by:
\begin{equation} \label{eqn:Family}
\Fam = \{F \text{ rational of degree 2}~|~0, \infty \text{ are critical points and } F(1)=1\}.
\end{equation}
\end{notation}

Note that every rational map of degree two is conjugate to (at least one) member of $\Fam.$

The following innocent lemma, which is trivial to prove, is of fundamental importance to why there is such a thing as the Medusa Algorithm.

\begin{lem} \label{FamilyLemma}
Given two distinct points $u, v \in \Ch \setminus \{ 1 \}$ there exists a unique $F \in \Fam$ so that $F(0) = u$ and $F(\infty) = v.$
\end{lem}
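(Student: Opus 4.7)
\medskip
\noindent\textbf{Proof plan.} The plan is to put a general $F \in \Fam$ into a canonical form and then solve a small linear system for the parameters.

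Any $F \in \Fam$ has degree $2$ and so is a branched double cover of the sphere. The deck transformation $\sigma$ of this cover is a M\"obius involution whose two fixed points are exactly the critical points of $F$. Since these are $0$ and $\infty$, we must have $\sigma(z) = -z$, so $F(-z) = F(z)$ and $F$ factors through the squaring map: $F(z) = G(z^2)$ for some M\"obius transformation $G$. Writing $G(w) = (\alpha w + \beta)/(\gamma w + \delta)$ with $\alpha\delta - \beta\gamma \ne 0$, determined up to a common nonzero scalar, puts $F$ in the canonical form
\[
F(z) = \frac{\alpha z^2 + \beta}{\gamma z^2 + \delta}.
\]

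Next I would impose the three conditions $F(0)=u$, $F(\infty)=v$, $F(1)=1$. In the generic case $u,v \in \CC$ these read
\[
\beta = u\delta, \qquad \alpha = v\gamma, \qquad \alpha + \beta = \gamma + \delta,
\]
and substituting the first two into the third yields $(v-1)\gamma = (1-u)\delta$. Since $u\ne 1$ and $v\ne 1$, this fixes the ratio $\gamma:\delta$; then $\alpha$ and $\beta$ are determined in turn, and the resulting tuple $(\alpha,\beta,\gamma,\delta)$ is unique up to the common scalar, so $F$ itself is unique. The nondegeneracy $\alpha\delta - \beta\gamma \ne 0$ boils down to $u \ne v$, which is the remaining hypothesis, so the $F$ so produced really does have degree $2$.

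The cases $u = \infty$ or $v = \infty$ are handled identically, by replacing the equation $\beta = u\delta$ or $\alpha = v\gamma$ with the projective form $\delta = 0$ or $\gamma = 0$ respectively; each yields a unique solution under the same hypotheses $u,v \ne 1$ and $u \ne v$. There is no real obstacle here — this is a direct case analysis, as the authors indicate. The one thing to be careful about is the projective normalization: ``uniqueness up to scalar'' of $(\alpha,\beta,\gamma,\delta)$ must be checked to give honest uniqueness of $F\in\Fam$, and the nondegeneracy $\alpha\delta - \beta\gamma \ne 0$ must be verified in every sub-case.
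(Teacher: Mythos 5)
Your proof is correct and follows essentially the same route as the paper's: write $F(z)=(\alpha z^2+\beta)/(\gamma z^2+\delta)$ using that $0$ and $\infty$ are the critical points, impose the three conditions $F(0)=u$, $F(\infty)=v$, $F(1)=1$ as a linear system, and observe that the solution $(\alpha,\beta,\gamma,\delta)$ is unique up to a scalar (the paper phrases this as the coefficient matrix having rank $3$, you solve it by substitution -- same computation). The only additions are that you justify the canonical form via the deck involution $z\mapsto -z$ and explicitly verify the nondegeneracy $\alpha\delta-\beta\gamma\neq 0$, both of which the paper leaves implicit; these are welcome but do not change the argument.
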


The lemma shows that there is some magic to quadratic rational maps. Normalized in the way described, we just need the position of the two critical values (and which correspond to which critical point) to uniquely determine the map. We don't need any extra combinatorial information.

\begin{proof}
We prove the lemma in the case where $u, v$ are different from infinity.
The case where either $u$ or $v$ equals infinity is just as easy and left to the reader.
First notice that
$F: z \mapsto  \frac{(u-1)v z^2 - u(v-1)}{(u - 1)z^2-(v-1)} \in \Fam,$
has the desired properties, so we need to show that this is the only such map in $\Fam.$ 
Since the origin and infinity are critical points, we can write
$$F(z) = \frac{a z^2 + c}{b z^2 + d}.$$
That $1$ is fixed, $F(\infty) = v$ and $F(0)=u$ implies that $a - vb = 0,$
$c - ud = 0$ and $a-b+c-d=0.$ When either $u$ or $v$ is different from $1$ the matrix
$$
\left[
\begin{array}{cccc}
1 & -v & 0 & 0 \\
0 &  0 & 1 & -u \\
1 & -1 & 1 & -1 
\end{array}
\right]
$$
has rank $3.$ It follows that every solution to the three equations
can be written $(a,b,c,d) = \lambda ((u-1)v, u-1, -u(v-1), -(v-1))$ for some
$\lambda \in \CC,$ and therefore $F$ is uniquely determined. 
\end{proof}

In the following we will write $F_{u, v}$ for the map given by the lemma.

\subsection*{The Standard Medusa}
We now build a model for the mating $F = f_1 \mate f_2$ of the two postcritically finite quadratic maps $f_1, f_2.$ We start by defining the standard Medusa.

\begin{defn}  \label{defn:stdmed}
Let $\theta_1, \theta_2 \in \ZZ$ be the two rational numbers we associate to $f_1$ and $f_2,$ as in \S\ref{sec:backgroundrays}. Define the $(\theta_1, \theta_2)$ \textit{standard Medusa}
$\MM(\theta_1, \theta_2) \subset \Stwo$ to be the union of the unit circle $\Sone,$ the \textit{interior legs}
$$\{\rho \exp(2 i \pi  2^j \theta_1)~|~\frac{1}{2} \leq \rho \leq 1, j=1, 2, \ldots \}$$
and the \textit{exterior legs}
$$\{\rho \exp(-2i \pi 2^j\theta_2)~|~1 \leq \rho \leq 2, j = 1, 2, \ldots \}.$$
\end{defn}

Defined in this way we have that $z \mapsto 1/z$ maps
$\MM(\theta_2, \theta_1)$ bijectively to $\MM(\theta_1, \theta_2).$
The endpoints of the interior legs we denote by $x_j$, and the endpoints of the exterior legs we denote by $y_j$, hence
$$x_j = 2 \exp(2i\pi 2^j \theta_1), j=1, 2, \ldots, \text{ and  }$$
$$y_j = 1/2 \exp(-2i\pi 2^j \theta_2), j=1, 2, \ldots .$$

We can think of the standard Medusa as a coupling of  two standard spiders $\Sbb_{\theta_1}, \Sbb_{\theta_2}$, where the bodies have been cut away, then the two are glued along the cut.
See Figure~\ref{fig:Medusaform} for a schematic diagram of this process.


\begin{figure}
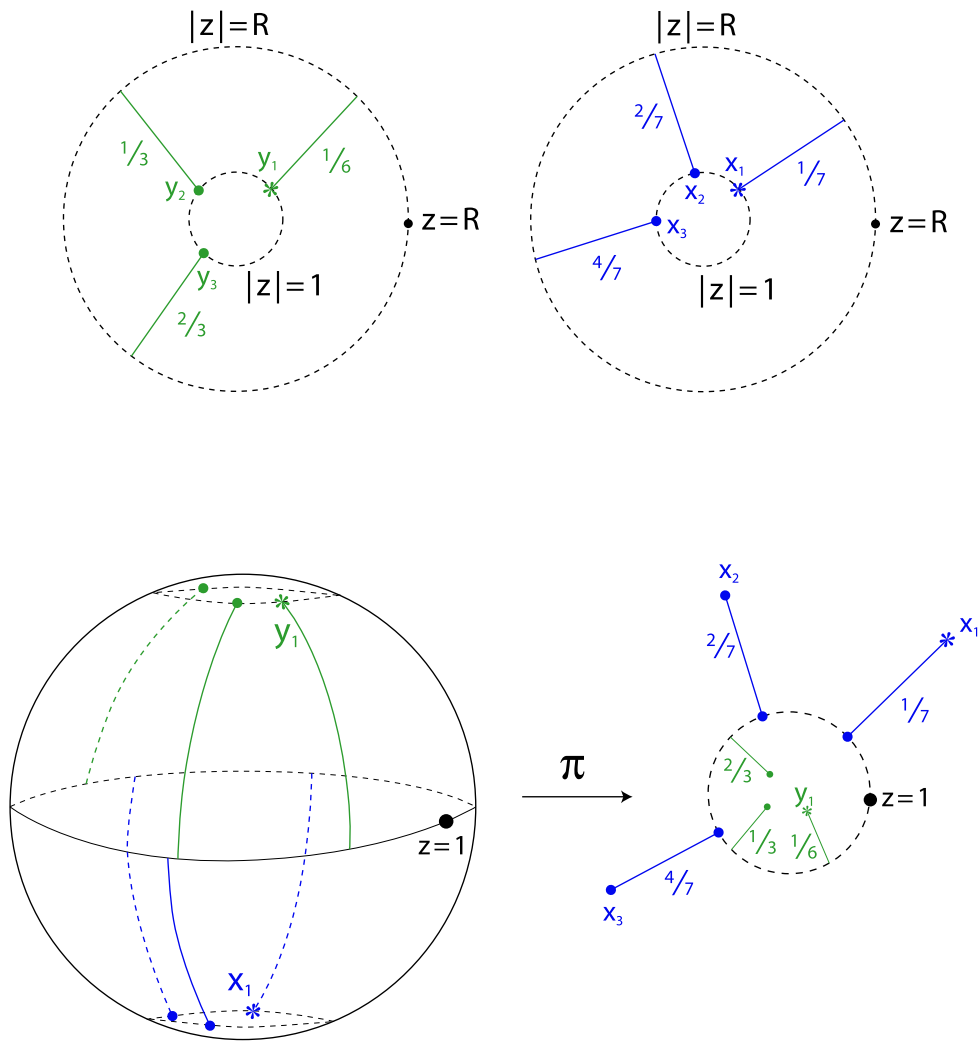

\begin{center}
  \drawfigformmedusa
     \caption{
 \label{fig:Medusaform} Above is a schematic of the process of mating $1/6$ with $1/7$.  The upper figures are the truncated spiders, the lower left is the Medusa on the sphere, and the lower right is the projection of the Medusa to the plane.
}
\end{center}
\end{figure}


\subsection*{Thurston Matings}
Recall that two postcritically finite branched coverings $F: \Stwo \to \Stwo$ and $g : \Stwo \to \Stwo$ with postcritical sets $P_F$ and $P_g$ are called \textit{Thurston equivalent} if there exists orientation preserving homeomorphisms $\phi$ and $\psi$ such that $\phi$ restricted to $P_F$ maps bijectively onto $P_g$ and $\psi^{-1} \circ \phi$ is isotopic to the identity on $\Stwo$ rel.\ $P_F.$

We proceed to define a branched covering $g$ of $\Stwo$ by itself that in nondegenerate cases is Thurston equivalent to the mating $F = f_1 \mate f_2$. Let
$g|_{\MM(\theta_1, \theta_2)}$ be the angle doubling map
$r \exp(i\phi) \mapsto r \exp(2i \phi).$ Extend $g$ smoothly to a degree two branched covering of the sphere so that:
\begin{enumerate}
\item $g : \DD \to \DD$ is a degree two branched coveing with critical value at $x_1,$ and
\item $g : \Stwo \setminus \overline{\DD} \to \Stwo \setminus \overline{\DD}$ is a degree two branched covering with the critical value at $y_1.$ 
\end{enumerate}
Denote by $\omega_1$ the critical point of $g$ in $\DD$ and by $\omega_2$ the critical point
of $g$ in $\Stwo \setminus \overline{\DD}.$ Notice that $\omega_i$ coincides with an endpoint of a leg if and only if $\theta_i$ is periodic under angle doubling,
$\theta \mapsto 2\theta \mod 1.$

Notice that if we redefine $g$ outside the unit circle to by setting it equal to $z \mapsto z^2$ here, we obtain a map that is Thurston equivalent to $f_1.$ Similarly, if we instead redefine $g$ inside the unit circle so it restricts to $z\mapsto z^2$ here, we obtain a mapping that is Thurston equivalent to $f_2.$ Hence it is reasonable to view $g$ as our branched covering model of the mating $F.$  Shishikura \cite{Shish2000} guarantees convergence in the nondegenerate case:

\begin{defn}  \label{defn:strongmate}
Let $f_1, f_2$ be PCF quadratic polynomials not in complex conjugate limbs of $M$.  If the two critical orbits of $F \cong f_1 \mate f_2$ are disjoint, then $f_1$ and $f_2$ are called {\em strongly mateable}.
\end{defn}

\begin{thm} [\cite{Shish2000}]  \label{thm:strongmate}
If $f_1, f_2$ are strongly mateable, then $g$ is Thurston equivalent to the mating $F \cong f_1 \mate f_2$.
\end{thm}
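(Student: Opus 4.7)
The plan is to build an explicit Thurston equivalence between $g$ and the rational mating $F$ by constructing homeomorphisms $\phi,\psi:\Stwo\to\Stwo$ directly from the conjugacy $h:\mathcal{K}\to\Ch$ that exhibits $F$ as the mating of $f_1$ and $f_2$, and then verifying the required isotopy condition using the disjointness of the two critical orbits.

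First I would define $\phi$ by transporting the standard Medusa $\MM(\theta_1,\theta_2)$ onto a natural ``dynamical Medusa'' inside $\Ch$. The unit circle is sent via $e^{2\pi i t}\mapsto h(\gamma_1(t))=h(\gamma_2(-t))$ onto the common image of the two Julia sets; the critical points $\omega_1,\omega_2$ are sent to $0$ and $\infty$; and the interior (respectively exterior) legs are sent to simple arcs inside the Fatou component of $F$ containing $0$ (respectively $\infty$), chosen so that $\phi(x_j)=F^j(0)$ and $\phi(y_j)=F^j(\infty)$. Because strong mateability guarantees that the two critical orbits of $F$ are disjoint, these arcs can be taken pairwise disjoint except at their common endpoint $0$ or $\infty$, and $\phi$ then extends to a homeomorphism of $\Stwo$ by a Schoenflies-type extension over the two complementary topological disks.

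Next I would produce $\psi$ as a lift. By construction, the branched coverings $\phi\circ g$ and $F\circ\phi$ have the same critical points, send each critical point to the same image, and induce the same map on the postcritical set $P_g$. Thus they differ only by an orientation-preserving homeomorphism of $\Stwo$ fixing $P_F$ pointwise, and the usual covering-space theory for branched covers with marked points produces a homeomorphism $\psi:\Stwo\to\Stwo$ satisfying $F\circ\psi=\phi\circ g$ and $\psi|_{P_g}=\phi|_{P_g}$.

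The main obstacle is showing that $\psi^{-1}\circ\phi$ is isotopic to the identity rel $P_g$. Here I would use the fact that $g^{-1}$ maps the Medusa into itself, so that $\phi\circ g$ sends the Medusa onto $\phi(\MM(\theta_1,\theta_2))$; this forces $\psi$ to agree with $\phi$, up to isotopy, on this one-dimensional $g$-invariant skeleton. The two complementary disks are then filled in by a further isotopy, whose existence reduces, with strong mateability preventing the two critical orbits from interfering, to the standard fact that two orientation-preserving homeomorphisms of a closed disk which agree on the boundary and on a finite interior set are isotopic rel that data. With this isotopy in place, the pair $(\phi,\psi)$ exhibits the required Thurston equivalence $g\sim F$.
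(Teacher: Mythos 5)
The paper offers no proof of this theorem---it is quoted from Shishikura (building on Rees and Tan Lei)---so your argument must stand on its own, and it has a genuine gap at the first step. You define $\phi$ on the unit circle by $e^{2\pi i t}\mapsto h(\gamma_1(t))$. But $\gamma_1$ is the Carath\'eodory \emph{semi}conjugacy: for every postcritically finite quadratic other than $z^2$ it is non-injective (for the basilica, the angles $1/3$ and $2/3$ land at the same point), and the mating relation $\gamma_1(t)\backsim\gamma_2(-t)$ collapses still more points through ray-equivalence chains. So your map sends the circle onto the Julia set of $F$ by a continuous surjection that is nowhere an embedding; it cannot be the restriction of any homeomorphism of $\Stwo$, and the Schoenflies extension you invoke is unavailable. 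A correct $\phi$ must send $\Sone$ to an honest equator separating the two critical points---the Julia set arises only as the Hausdorff limit of pullbacks of such an equator---and is constrained only on the postcritical set. A secondary problem: when $\theta_1$ is strictly preperiodic the critical orbit of $0$ under $F$ lies in the Julia set, and in the Latt\`es examples of \S\ref{sec:lattes} the Fatou set of $F$ is empty, so there is no ``Fatou component of $F$ containing $0$'' in which to route the interior legs.

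The deeper issue is that the sentence ``this forces $\psi$ to agree with $\phi$, up to isotopy, on this one-dimensional $g$-invariant skeleton'' conceals the entire content of the theorem. Even granting that $F$ and the conjugacy $h$ exist, what $h$ induces on the sphere where $g$ lives is only a semiconjugacy collapsing each ray-equivalence class to a point; turning this into a pair of genuine homeomorphisms $(\phi,\psi)$ with $\psi^{-1}\circ\phi$ isotopic to the identity rel the postcritical set is exactly where the work lies. The known proofs either verify the absence of Thurston obstructions for the formal mating (a combinatorial classification of obstructing multicurves in terms of ray-equivalence classes) and invoke Thurston's theorem, or show that each ray-equivalence class is a finite tree and apply Moore's theorem on upper semicontinuous decompositions of $\Stwo$ to approximate the collapsing map by homeomorphisms compatibly with the dynamics. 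Neither ingredient appears in your outline. Note also that strong mateability is needed for more than keeping arcs disjoint: it guarantees that the postcritical set of $g$ maps bijectively onto that of $F$, so that the two marked spheres carry matching Teichm\"uller data; without it the notion of Thurston equivalence you are trying to verify does not even have the correct marked set.
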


Thurston's algorithm is an iterative process that will give us a sequence of rational maps converging to $F$ when $F$ and $g$ are Thurston equivalent. Using $g$ as our model map, it works as follows. 
\begin{figure}
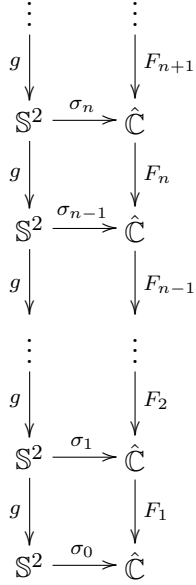

$$
\diagram
\vdots \dto_g & \vdots \dto^{F_{n+1}} \\
\Stwo \rto^{\sigma_n} \dto_g & \Ch \dto^{F_n} \\
\Stwo \rto^{\sigma_{n-1}} \dto_g & \Ch \dto^{F_{n-1}} \\
\vdots \dto_g & \vdots \dto^{F_2}  \\
\Stwo \rto^{\sigma_1} \dto_g & \Ch \dto^{F_1} \\
\Stwo \rto^{\sigma_0} & \Ch 
\enddiagram
$$
\caption{\label{fig:comdiag}A commutative diagram representing the maps involved in Thurston's algorithm.}
\end{figure}
Let $\sigma_0 : \Stwo \to \Ch$ be an orientation preserving homeomorphism mapping $\omega_1$ to $0,$ $\omega_2$ to $\infty$ and fixing $1.$ Recursively define $\sigma_n$ and $F_n$ as follows for $n = 1, 2, \ldots$ Interpret $\sigma_{n-1}$ as a global chart defining a complex structure on $\Stwo.$ This complex structure can be pulled back by $g.$ Indeed, since $g$ is a local homeomorphism everywhere except at $\omega_i,$ $i=1,2$ we can just compose restrictions of $g$ with $\sigma_{n-1}.$ The complex structure defined in this way can be uniquely extended to the missing points $\omega_1, \omega_2.$ By the uniformization theorem $\Stwo$ equipped with the pullback complex structure is conformally equivalent to $\Ch.$ So let $\sigma_n : \Stwo \to \Ch$ be the conformal isomorphisms 
 and normalize it so $\omega_1$ is mapped to $0,$ $\omega_2$ to $\infty$ and $1$ is fixed. By construction $F_n$ defined by the composition  $\sigma_n \circ g \circ \sigma_n^{-1}$ is holomorphic. The sequence of maps constructed can be illustrated by the commutative diagram shown in Figure~\ref{fig:comdiag}.

In principle Thurston's algorithm solves our problem, the sequence of generated maps rational maps should converge to our mating. However the set of possible complex structures on $\Stwo$ is beyond actual computations, so we need to adapt the algorithm to allow for this. This is exactly what Hubbard's Medusa Algorithm does for us.


\subsection*{The Medusa Algorithm}

Notice that each map $F_n$ in Thurston's algorithm (in the strongly mateable described) is a degree two rational map fixing $1$ and having the origin and infinity as critical points. In other words, $F_n \in \Fam.$ By Lemma \ref{FamilyLemma} we just need to know to where $0$ and $\infty$ are mapped to identify $F_n$. Hence we don't need all the information contained in the sequence of complex structures to find $F_n,$ it is enough knowing $\sigma_{n-1}$ restricted to the standard Medusa $\MM(\theta_1, \theta_2).$ Motivated by this we make the following definition.

\begin{defn} \label{defn:medsp} Set
$$\Mspace_0(\theta_1, \theta_2) = \{ \sigma |_{\MM(\theta_1, \theta_2)}
~|~ \sigma \in \textrm{homeo}^+(\Stwo \to \Ch) \text{ and normalized}\},$$ 
where normalized here means that $\sigma(\omega_1) = 0, \sigma(\omega_2) = \infty, \sigma(1) = 1,$ and define the \textit{Medusa space}
$\Mspace$  as the quotient of $\Mspace_0$ with the equivalence relation that identifies $\sigma_1$ and $\sigma_2$ if and only if the two maps are isotopic rel $\{ x_1, x_2, \ldots\} \cup \{y_1, y_2, \ldots, \}$ through mappings in $\Mspace_0.$
\end{defn}

Notice there is a natural projection $\pi$ from the complex structures on $\Stwo$ onto $\Mspace.$ Given a complex structure $\Sigma$ we know by the uniformization theorem that there exists a conformal isomorphism $\sigma : (\Stwo, \Sigma) \to \Ch$ which we can normalize so that $\omega_1$ maps to $0,$ $\omega_2$ to infinity and $1$ is fixed. We let
$\pi(\Sigma)$ equal the equivalence class of $\sigma|_{\MM(\theta_1, \theta_2)}$ in $\Mspace(\theta_1, \theta_2).$


One can show that there is a natural bijection between $\Mspace(\theta_1, \theta_2)$ and the Teichm\"uller space of $\Stwo \setminus \{x_1, x_2, \ldots, y_1, y_2, \ldots, 1 \},$ so Medusa space is a finite dimensional complex manifold in a natural way.

Mappings in Medusa space can be lifted. More precisely we have the following lemma.

\begin{lem} \label{MedusaIterate}
Let $s_{n-1} \in \Mspace_0(\theta_1, \theta_2)$ be given. Set $u_n = s_{n-1}(x_1), v_n = s_{n-1}(y_1)$ and let $F_{u_n, v_n} \in \Fam$
be the unique mapping as in Lemma~\ref{FamilyLemma}. Then there is a unique mapping $s_n \subset \Mspace_0(\theta_1, \theta_2)$ such that the following diagram commutes.
$$
\diagram
\MM(\theta_1, \theta_2) \rto^{s_n} \dto_g & \Ch \dto^{F_{u_n, v_n}} \\
\MM(\theta_1, \theta_2) \rto^{s_{n-1}} & \Ch
\enddiagram
$$
If $s_{n-1}'$ and $s_{n-1}$ represent the same element in $\Mspace(\theta_1, \theta_2)$ then
the lifts $s_n'$ and $s_n$ also represent the same element in $\Mspace(\theta_1, \theta_2).$
\end{lem}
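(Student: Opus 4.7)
The plan is to construct $s_n$ by lifting against the degree-two branched covering $F_{u_n, v_n}$. First extend $s_{n-1}$ to a normalized orientation-preserving homeomorphism $\tilde{s}_{n-1}: \Stwo \to \Ch$, which exists by the definition of $\Mspace_0$. The composition $\tilde{s}_{n-1} \circ g$ is then a degree-two orientation-preserving branched covering with critical points $\omega_1, \omega_2$ and critical values $u_n = \tilde{s}_{n-1}(x_1)$ and $v_n = \tilde{s}_{n-1}(y_1)$, while $F_{u_n, v_n}$ is a degree-two branched covering with critical points $0, \infty$ and the same two critical values. Restricted to the complements of the critical points, both are unbranched double covers of the twice-punctured sphere $\Ch \setminus \{u_n, v_n\}$; since $\pi_1(\Ch \setminus \{u_n, v_n\}) \cong \ZZ$ has a unique index-two subgroup, these covers are equivalent and path-lifting produces a homeomorphism $\tilde{s}_n: \Stwo \to \Ch$ with $F_{u_n, v_n} \circ \tilde{s}_n = \tilde{s}_{n-1} \circ g$.

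The only remaining ambiguity in $\tilde{s}_n$ is post-composition with the deck involution of $F_{u_n, v_n}$, which (being a M\"obius involution fixing both critical points $0$ and $\infty$) is $z \mapsto -z$ and thus swaps $1$ with $-1$. Since $F_{u_n, v_n}(1) = 1$ by construction, exactly one of the two lifts sends $1$ to $1$; take this one as $\tilde{s}_n$. Ramification now forces $\tilde{s}_n(\omega_1) = 0$ and $\tilde{s}_n(\omega_2) = \infty$, because $F_{u_n, v_n}^{-1}(u_n) = \{0\}$ and $F_{u_n, v_n}^{-1}(v_n) = \{\infty\}$ while $\tilde{s}_{n-1} \circ g$ sends $\omega_i$ to the corresponding critical value. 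The restriction $s_n := \tilde{s}_n|_{\MM(\theta_1, \theta_2)}$ therefore lies in $\Mspace_0$. Since $g(\MM(\theta_1, \theta_2)) \subset \MM(\theta_1, \theta_2)$, the lifting equation on $\MM(\theta_1, \theta_2)$, the normalization $s_n(1) = 1$, and connectedness of $\MM(\theta_1, \theta_2)$ determine $s_n$ by path-lifting alone, independent of the chosen extension, giving both uniqueness and the claim that $s_n$ depends only on $s_{n-1}|_{\MM(\theta_1, \theta_2)}$.

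For the isotopy statement, suppose $s_{n-1}$ and $s_{n-1}'$ are connected by an isotopy $H_t$ in $\Mspace_0$ rel the endpoint set $E = \{x_j\} \cup \{y_j\}$. Carry out the construction at each $t$: by Lemma~\ref{FamilyLemma} the maps $F_{H_t(x_1), H_t(y_1)}$ depend holomorphically on $(H_t(x_1), H_t(y_1))$ and hence continuously on $t$, and continuous path-lifting with the fixed normalization at $1$ yields a continuous family of lifts, providing an isotopy in $\Mspace_0$ between $s_n$ and $s_n'$. For $p \in E$, the identity $F_{H_t(x_1), H_t(y_1)}(H_t^{\mathrm{lift}}(p)) = H_0(g(p))$ confines $H_t^{\mathrm{lift}}(p)$ to a finite preimage set fixed in $t$, so continuity forces it to be constant and the isotopy is rel $E$. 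The main obstacle throughout lies in choosing the correct lift among the two: ramification automatically handles the critical-point normalization, but matching the fixed point at $1$ requires the very special property of $\Fam$ exploited by Lemma~\ref{FamilyLemma}.
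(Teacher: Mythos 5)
Your proof is correct, and it reaches the lift by a genuinely different decomposition than the paper's. The paper works from the inside out: it restricts to the unit circle, shows that $(s_{n-1}\circ g)_*$ and $(F_{u_n,v_n})_*$ have the same image $2\ZZ$ in $\pi_1(\gamma')$ where $\gamma'=\sigma_{n-1}(\Sone)$, lifts on $\Sone$ by the lifting criterion, then extends the lift leg by leg, and finally must check separately that interior (exterior) legs lift to the bounded (unbounded) component of $\Ch\setminus\gamma$ so that $s_n$ extends to a normalized sphere homeomorphism and hence lies in $\Mspace_0(\theta_1,\theta_2)$. You instead extend $s_{n-1}$ to a sphere homeomorphism first and lift globally, using that $\tilde s_{n-1}\circ g$ and $F_{u_n,v_n}$ are both connected unbranched double covers of the twice-punctured sphere $\Ch\setminus\{u_n,v_n\}$ and that $\ZZ$ has a unique index-two subgroup. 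The two arguments rest on the same covering-space fact, but yours buys automatic membership of the restriction in $\Mspace_0(\theta_1,\theta_2)$ and automatic placement of the critical points via ramification, at the cost of having to show that the restriction to $\MM(\theta_1,\theta_2)$ is independent of the chosen extension --- which you handle correctly by unique path-lifting on the connected set $\MM(\theta_1,\theta_2)$ together with the normalization $s_n(1)=1$, which is in substance the paper's own uniqueness argument. On the isotopy statement you are in fact more careful than the paper, since you verify that the lifted isotopy is rel the endpoint set; one simplification you could exploit there is that because the isotopy $H_t$ is rel $\{x_j\}\cup\{y_j\}$, the points $H_t(x_1)$ and $H_t(y_1)$ do not move, so $F_{H_t(x_1),H_t(y_1)}$ is constant in $t$ and you are lifting a homotopy through a single fixed covering rather than a continuously varying family.
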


\begin{proof} Since the simple closed curve $\gamma' = \sigma_{n-1}(\Sone)$ seperates one critical point $0$ and its image $u_n = F_{u_n, v_n}(0)$ from the other critical point $\infty$ and its image $v,$ the preimage $\gamma$ of $\gamma'$ by $F_{u_n, v_n}$ is a simple closed curve and
$F_{u_n, v_n} : \gamma \to \gamma'$ is a two to one covering map. Identify the fundamental group on $\Sone$ with $\ZZ$ so that a curve having index $1$ with respect to $0$ correspond to $+1 \subset \ZZ.$ Do similarly for $\gamma$ and $\gamma'.$ Then the induced map $g_* : \ZZ \to \ZZ$ is multiplication by two. Since $s_{n-1}$ extends to a homeomorphism that maps $\omega_1$ to $0$
$(s_{n-1})_* : \ZZ \to \ZZ$ is the identity. Finally, $F_{u_n, v_n}$ maps the bounded component of $\Ch \setminus \gamma$ onto the bounded component of $\Ch \setminus \gamma'$ which implies that $(F_{u_n, v_n})_* : \ZZ \to \ZZ$ is multiplication by $+2.$
Hence $(s_{n-1} \circ g)_* : \pi_1(\Sone) \to \pi_1(\gamma')$ has the same image as
$(F_{u_n, v_n)})_* : \pi_1(\gamma) \to \pi_1(\gamma').$ It follows by a fundamental theorem of algebraic topology that there exists a covering map $s_n : \Sone \to \gamma$ so that $g \circ s_{n-1} = s_{n-1} \circ F_{u_n, v_n}$ on $\Sone,$ and this lift is unique when we require that $s_n(1) = 1.$ We can extend $s_n$ to $\MM(\theta_1, \theta_2)$ by lifting each leg seperately, in the way that agree with how $s_n$ is defined on the circle. In this way we have obtained a homeomorphism $s_n$ mapping $\MM(\theta_1, \theta_2)$ to its image, and we must show that $s_n \in \Mspace_0(\theta_1, \theta_2).$ 
However, since $F_{u_n, v_n}$ maps the bounded (unbounded) part of $\Ch \setminus \gamma$ to the bounded (unbounded) $\Ch \setminus \gamma'$ the image of an interior (exterior) leg is interior (exterior), so we can extend $s_n$ to a orientation preserving homeomorphism of the sphere as required.

We still need to show uniqueness of $s_n.$ For $s_n$ to be an element of $\Mspace_0(\theta_1, \theta_2)$ we must have $s_n(1) = 1$ and that uniquely determines $s_n$ on $\Sone.$ Knowing $s_n$ on the unit circle means we know to where the base point of the legs must lift and therefore there is only on extension to $\MM(\theta_1, \theta_2)$ such that
$g \circ s_{n-1} = s_{n-1} \circ F_{u_n, v_n}.$

Finally suppose that $s_{n-1}'$ and $s_{n-1}$ represent the same element in $\Mspace(\theta_1, \theta_2)$ and let $s_n, s_n' \in \Mspace_0(\theta_1, \theta_2)$ be the two unique lifts. By assumption there exists an istopy connecting $s_{n-1}'$ to $s_{n-1},$ through maps in $\Mspace_0(\theta_1, \theta_2).$ This isotopy can be lifted to an isotopy connecting $s_n$ and $s_n'.$ Each map in the isotopy maps $1$ to $1$ so as before we can prove that it is an element of $\Mspace_0(\theta_1, \theta_2).$ 
\end{proof}

Let a starting point $S_0 \in \Mspace(\theta_1, \theta_2),$ be given. The Medusa algorithm consists of repeatedly applying Lemma \ref{MedusaIterate} to get a sequence $S_n \in \Mspace(\theta_1, \theta_2)$ and rational maps $F_{u_n, v_n} \in \Fam$ for $n = 1, 2, \ldots.$
The beauty of the algorithm is that we produce the same sequence of rational maps that Thurston's algorithm produces.

\begin{thm} \label{EquivalenceTheorem}
Suppose $\pi(\sigma_0) = S_0.$ Then the rational maps produced by Thurston's algorithm equal those produced by the Medusa algorithm, $F_n = F_{u_n, v_n}.$ Futhermore $\pi(\sigma_n) = S_n,$ $n=1, 2, \ldots$
\end{thm}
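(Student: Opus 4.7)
The plan is to proceed by induction on $n$, with the base case $n = 0$ supplied directly by the hypothesis $\pi(\sigma_0) = S_0$. For the inductive step, assume $\pi(\sigma_{n-1}) = S_{n-1}$, and then derive both $F_n = F_{u_n, v_n}$ and $\pi(\sigma_n) = S_n$ in that order. The crucial observation that makes the argument work is that every $F_n$ produced by Thurston's algorithm lies in $\Fam$, so by Lemma~\ref{FamilyLemma} it is pinned down by just two pieces of data: the images $F_n(0)$ and $F_n(\infty)$.

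First I would identify $F_n$ with $F_{u_n,v_n}$. Evaluating the Thurston commutative relation $F_n \circ \sigma_n = \sigma_{n-1} \circ g$ at $\omega_1$ and $\omega_2$, and invoking the normalizations $\sigma_n(\omega_1) = 0$, $\sigma_n(\omega_2) = \infty$ together with $g(\omega_1) = x_1$ and $g(\omega_2) = y_1$, gives $F_n(0) = \sigma_{n-1}(x_1)$ and $F_n(\infty) = \sigma_{n-1}(y_1)$. Because $x_1$ and $y_1$ are among the endpoints held pointwise fixed by the isotopies defining the equivalence relation on $\Mspace$, the values $s_{n-1}(x_1)$ and $s_{n-1}(y_1)$ depend only on the class $S_{n-1}$; taking the representative $s_{n-1} = \sigma_{n-1}|_{\MM(\theta_1,\theta_2)}$, which is permitted by the inductive hypothesis, yields $u_n = F_n(0)$ and $v_n = F_n(\infty)$. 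Since $F_n \in \Fam$, the uniqueness part of Lemma~\ref{FamilyLemma} forces $F_n = F_{u_n, v_n}$.

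Next I would verify $\pi(\sigma_n) = S_n$. Restricting the Thurston diagram to $\MM(\theta_1,\theta_2)$ and substituting the equality just proved shows that $\sigma_n|_{\MM(\theta_1,\theta_2)}$ is a normalized element of $\Mspace_0(\theta_1,\theta_2)$ satisfying $F_{u_n, v_n} \circ (\sigma_n|_{\MM}) = (\sigma_{n-1}|_{\MM}) \circ g$, which is precisely the lifting equation characterizing $s_n$ in Lemma~\ref{MedusaIterate} when $s_{n-1}$ is chosen to be $\sigma_{n-1}|_{\MM}$. The uniqueness clause of that lemma then identifies $\sigma_n|_{\MM}$ with $s_n$ in $\Mspace_0$, hence with $S_n$ in $\Mspace$; the second half of the same lemma guarantees that this $\Mspace$-class is independent of which representative of $S_{n-1}$ was used to run the Medusa step, so the identification $\pi(\sigma_n) = S_n$ is unambiguous.

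The main obstacle is not any single deep computation but the careful bookkeeping of normalizations and equivalence classes: one must verify that the Thurston-produced $\sigma_n$ really descends to an element of $\Mspace_0$ with the correct normalization, that $x_1$ and $y_1$ are postcritical points (so $u_n$ and $v_n$ are well defined on $\Mspace$-classes), and that isotopy classes on $\MM$ match up across the two constructions. Each ingredient follows transparently from the setup, but they must be assembled in the right order for the induction to close.
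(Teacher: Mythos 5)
Your argument is correct and follows essentially the same route as the paper's proof: induction on $n$, identifying $F_n$ with $F_{u_n,v_n}$ via the two critical values and Lemma~\ref{FamilyLemma}, then invoking the uniqueness of lifts in Lemma~\ref{MedusaIterate} to conclude $\pi(\sigma_n)=S_n$. The extra bookkeeping you supply (evaluating the commutative relation at $\omega_1,\omega_2$ and checking well-definedness on $\Mspace$-classes) is exactly what the paper leaves implicit.
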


\begin{proof} Assume $\pi(\sigma_{n-1}) = S_{n-1},$ this is case when $n=1$ by assumption. Then
$s_{n-1} = \sigma_{n-1} |_{\MM(\theta_1, \theta_2)}$ is a representative of $S_{n-1}.$
Now $F_n \in \Fam$ maps $0$ to $\sigma_{n-1}(x_1)$ and $\infty$ to $\sigma_{n-1}(y_1).$
So too does $F_{u_n, v_n}.$ Hence, by Lemma \ref{FamilyLemma} $F_n = F_{u_n, v_n}.$
We have that $\sigma_n |_{\MM(\theta_1, \theta_2)} \in \Mspace_0(\theta_1, \theta_2),$ is a lift
of $s_{n-1}.$ So by the uniqueness part of Lemma \ref{MedusaIterate} $\pi(\sigma_n) = S_n.$
The theorem now follows by induction.
\end{proof}
 

Now we can justify the Medusa algorithm, by combining Theorems~\ref{thm:mateexist}, \ref{thm:strongmate}, and~\ref{EquivalenceTheorem}:

\begin{thm} \label{thm:main}
If $f_1$ and $f_2$ are strongly mateable, then the Medusa algorithm converges to the mating $F  \cong f_1 \mate f_2$.
\end{thm}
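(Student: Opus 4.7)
The theorem is essentially a synthesis of the preceding results, so the plan is to chain them together rather than invent new machinery. First I would unpack the hypothesis: since $f_1, f_2$ are strongly mateable, Definition~\ref{defn:strongmate} tells us that $c_1, c_2$ do not lie in conjugate limbs of $M$, so Theorem~\ref{thm:mateexist} produces an actual quadratic rational map $F \cong f_1 \mate f_2$, normalized to lie in $\Fam$ (placing the two critical points at $0$ and $\infty$ and the identified $\beta$-fixed-points at $1$, as arranged in the ``Normalizing matings'' discussion).

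Next I would invoke Theorem~\ref{thm:strongmate} (Shishikura's nondegeneracy statement) to conclude that the branched covering $g$ built from the standard Medusa $\MM(\theta_1, \theta_2)$ is Thurston equivalent to $F$. This is exactly the input needed to run Thurston's algorithm on $g$: because there is no Thurston obstruction in the strongly mateable case, the classical theorem of Douady--Hubbard guarantees that, once the sequence of complex structures is normalized so that $\omega_1 \mapsto 0$, $\omega_2 \mapsto \infty$, and $1$ is fixed, the resulting sequence of rational maps $F_n = \sigma_n \circ g \circ \sigma_n^{-1}$ converges (uniformly on $\Ch$, and in particular at the level of coefficients) to $F$.

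Finally I would apply Theorem~\ref{EquivalenceTheorem} to transfer this convergence from Thurston's algorithm to the Medusa algorithm. Given any starting point $S_0 \in \Mspace(\theta_1, \theta_2)$, choose a representative $\sigma_0$ with $\pi(\sigma_0) = S_0$ (this is possible by definition of $\Mspace$ and the projection $\pi$ described before Lemma~\ref{MedusaIterate}). The equivalence theorem then gives $F_n = F_{u_n, v_n}$ for every $n$, so the Medusa iterates $F_{u_n, v_n}$ are literally the Thurston iterates and therefore converge to $F$.

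The only point that requires care, and which I would flag as the main obstacle, is the convergence statement for Thurston's algorithm itself: it must be invoked in a normalization that matches the one used to define $\Fam$, so that the limit of the $F_n$ is a specific element of $\Fam$ and not merely a Möbius conjugate of one. This is handled by the shared normalization ($\omega_1 \mapsto 0$, $\omega_2 \mapsto \infty$, $1 \mapsto 1$) used both in the definition of $\Mspace_0$ and in the construction of $F \in \Fam$; once this compatibility is observed, the three cited theorems combine directly to yield the conclusion.
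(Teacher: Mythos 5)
Your proposal is correct and follows exactly the route the paper takes: the paper's entire justification of Theorem~\ref{thm:main} is the one-line remark that it follows by combining Theorems~\ref{thm:mateexist}, \ref{thm:strongmate}, and~\ref{EquivalenceTheorem}, which is precisely the chain you spell out (existence of $F$, Thurston equivalence of $g$ and $F$, convergence of Thurston's algorithm in the unobstructed case, and the identification $F_n = F_{u_n,v_n}$). Your added attention to the shared normalization $\omega_1 \mapsto 0$, $\omega_2 \mapsto \infty$, $1 \mapsto 1$ is a sensible elaboration of a point the paper leaves implicit.
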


In practice, the algorithm seems to converge without assuming the maps are strongly mateable.
Thus we expect that a stronger theorem holds; namely, it should be the case that anytime $f_1$ and $f_2$ are PCF quadratic polynomials  in complex conjugate limbs of $M$, the Medusa algorithm should converge to the mating.   The case not covered by Thurston's theorem is when two polynomials that are not in complex conjugate limbs have a mating with only one critical orbit.  In this case naively running the Medusa algorithm produces a sequence of Medusas which does not converge (rather tends to the boundary of the Teichmuller space), but the obstruction points (the critical orbits becoming identified) are all pushed together upon iteration of the algorithm, hence the sequence of rational maps seems to converge to the mating.  To prove this stronger result one could  investigate how the maps in the Medusa algorithm are converging as the boundary of the Medusa space is approached.  We expect the techniques of Nikita Selinger's PhD thesis \cite{Nik} on convergence at the boundary of Teichmuller space could be adapted to solve this question, and leave this future result to the interested reader. 

\subsection{The Implementation}
The point of the Medusa algorithm is that it lends itself to implementation as a computer program. The implementation is an adoption of the implementation of the spider algorithm to the more general setting of quadratic rational maps.

To initiate the program, the user inputs two rational angles $\theta_1, \theta_2$.  The implementation defines an initial Medusa $s_0 \colon \MM (\theta_1, \theta_2) \to \Ch$, say close to the identity.  

To describe our matings,  we define a chart on $\Fam$ by letting $R_{a,b} : z \mapsto \frac{az^2+(1-a)}{b z^2 + (1-b)},$ $(a,b) \in \CC^2 \setminus \{ (z,z)~|~z\in \CC \}.$ In this way we parametrize all the maps in $\Fam.$ Supposing that $F \in \Fam$ maps $0$ to $u$ and $\infty$ to $v,$ we let $a = \frac{v(u-1)}{u-v}$ and $b = \frac{u-1}{u-v}.$ Then $R_{a,b} = F = F_{u, v}.$

We represent a mapping $s : \MM(\theta_1, \theta_2) \to \CC$ by several lists of points in $\Ch.$ One list represent the image of the unit circle, and the other lists represent the images of the legs. Also we always let the list of points representing the image of the unit circle start with the point $1.$

We adopt the convention that two consecutive points in the image of the unit circle or in a leg is connected by an arc of circle. For the points on the image of the circle or on the interior legs the circle chosen is that through $s(y_1),$ and the arc of circle chosen is the one connecting the two points and omitting $s(y_1).$ For consecutive points on the exterior legs adopt the convention that they are connected by the arc of the circle through the points and $s(x_1).$ The arc is the one that connects the two points and omits $s(x_1).$

Clearly, with the information contained in the lists of points and the convention just mentioned we can reconstruct, not $s,$ but the isotopy class of $s.$

An iteration consists of finding the class of the pullback of $s_{n-1}$ (as in Lemma \ref{MedusaIterate}). As in the implementation of the spider algorithm we break the process down into three steps: a pullback step, a rectifying step and a pruning step.

\subsection*{Pullback.}
Given $s_{n-1}$ as lists of points as described we first find $F_{u_n, v_n} = R_{a_n, b_n}.$ This corresponds to solving 
\begin{equation} \label{eqn:medusainitialparams}
\frac{1-a}{1-b} = u_n = s_{n-1}(x_1) \  \text{ and } \ \frac{a}{b} = v_n = s_{n-1}(y_1).
\end{equation}
In other words 
$$a_n = \frac{(u_n - 1)v}{u-v} \ \text{ and } \ b_n = \frac{u_n-1}{u-v}.$$
 Notice that $R_{a_n, b_n}$ is the composition of a Mobius transformation with $z \mapsto z^2.$ Hence, pulling back a point consists of first pulling it back by a Mobius transformation $M_n$ and then by the square. The question that needs to resolved is, what branch of the squareroot do we need to choose.

First we pullback the points corresponding to the image of the unit circle. Suppose that we have pulled back a point $z_k$ and obtained the point $w_k$ and want to pullback the next point in the list $z_{k+1}.$ Pulling back first by the Mobius transformation we get that the circle through $z_k, z_{k+1}$ and $v_n$ becomes a circle through $M_n^{-1}(z_k), M_n^{-1}(z_{k+1})$ and $\infty$ i.e. a line. Since the arc of circle connecting the two points was chosen to be the one that did not contain $v_n$ the pullback of the arc of circle by the Mobius transformation becomes simply a line segment between $M_n^{-1}(z_k), M_n^{-1}(z_{k+1}).$ The preimage of a line by the square is a hyperbola, the two branches of which are contained in opposite quarter planes. Hence knowing one preimage $w_k,$ we need to choose the square root so that $w_k$ and $w_k+1$ lies in the same halfplane.

So the pullback the points corresponding to the circle we construct to lists, $A, B.$ The first element of $A$ is $1$ and the first element of $B$ is $-1,$ i.e. the two preimages of $1$ by $R_{a_n, b_n}.$ This was the first step. Next we iterate through the remaing points in the list. The $k$'th step consists in finding the two preimages of $z_k,$ call them $w_k$ and $w_k'.$ If the last inserted point in the list $A$ lies in the same quarter plane as $w_k$ then we insert $w_k$ in $A$, and $w_k'$ in $B$. Otherwise we insert $w_k$ in $B$ and $w_k'$ in the $A$. It is easy to verify that the points in the list $A$ are images of the points on the unit circle of angles in the interval $0 \leq \theta < \pi,$ whereas the points in $B$ correspond to angles $\theta$ with $\pi \leq \theta < 2\pi.$ Having pulled back all the points we can concanate the two list so we get one list (starting with the point $1$) representing the image of the circle by $s_n$. Notice that this list contains 
 twice the points of the one we have just pulled back.

Next we pull back the interior legs. The leg corresponding to angle $\theta$ is the preimage of the leg corresponding to angle $2\theta.$ If $0 \leq \theta < \pi$ the point in the list $A$ list that is the preimage of the anchor point of the leg of angle $2\theta$ will be the anchor point of the new $\theta$ leg, otherwise it will be the corresponding point in the list $B$. Hence we have already computed (and can localize) the pull-back of the first point in the leg. Hence, as before we can pull back the rest of the leg, we need to chose the square root so the consecutive points lies in the same halfplane.

Pulling back the outer legs is essentially the same, except that now pulling back by $M_n$ two consecutive defines a arc of circle, where the circle goes through $0.$ However since $z \mapsto z^2$ commutes with $z \mapsto 1/z$ we can write the squre as the composition of $1/z,$ $z^2$ and then $1/z$ again. Hence pulling back by $M_n$ and then making the change of coordinates
$w = 1/z$ we are back in the same situation as the one we were facing when pulling back the interior legs.

In this way we obtain a list of point representing the map $s_n.$ However, the points are now connected by arcs of hyperbolas and not arcs of circles. The next step, rectifying, remedy this situation.

\subsection*{Rectifying.}
Perhaps a better word for the second part of an iteration would be \textit{circlyfying.} We want to bring us back to the starting position where consecutive points in the lists are connected by arcs of circles. This is the most delicate part of the implementation. What we want to do is replace the arcs of hyperbolas with arcs of appropiate circles without changes the isotopy class of the corresponding element in $\Mspace(\theta_1, \theta_2).$
So given two consecutive points $z_1, z_2$ we want to see if there is a homotopy from an arc of hyperbola to an arc of circle so that the intermediate curves does not cross any of the \textit{distinguished points} $s_n(x_1), s_n(x_2), \ldots, s_n(y_1), s_n(y_2), \ldots\}.$ It is rather tedious so we will only outline how it is done. The circle and the hyperbola are two (real) quadratic curves and we first find their intersection. This can be boiled down to finding the roots of a degree $4$ equation in one real variable. However, since we know that $z_1$ and $z_2$ lies on both curves, we can do a division of polynomial and the remaing points (if any) can be found by solving a quadratic equation. The most difficult case when the branch of hyperbola containg $z_1$ and $z_2$ intersect the circle in four points. Then the union of the circle and the branch of hyperbola cuts the plane into six parts. By elementary geometric reasoning, one can find exactly to which of the six parts a given point belongs, and this knowledge is enough to decide if the homotopy exists.

If the homotopy exists then we can move on, but if it doesn't we need to do something. What we do is to subdivide the arc of hyperbola in two halves, $z_1, \zeta$ and $\zeta, z_2$ and recursively rectify each half. In case we are not dealing with a leg terminating at a distinguished point, then by compactness the distinguished points are a definite distance away from the arc of hyperbola between $z_1, z_2.$ Given any $\epsilon > 0,$ any fine enough subdivision of the arc of hyperbola, $z_1, \zeta_1, \zeta_2, \ldots, \zeta_k, z_2$ will satisfy, that if we replace the parts of hyperbolas with arcs of circles we will stay with a spherical $\epsilon$ neighborhood of the original arc of hyperbola. Hence, we are able to rectify after adding only a finite number of points. In the case that the arc of hyperbola terminates in a distinguished point $z_2$ then we are dealing with the image of a leg. It is not difficult to see that we do not change the isotopy class of $s_n$ by allowing the homotopy to cross $z_2.$ In practice, this means that when rectifying a leg, we do not consider the endpoint of the leg a distinguished point, and we are sure that we can rectify adding only a finite number of points.

\subsection*{Pruning.}
After pulling back and rectifying, we have new lists of point representing $s_n,$ but the number of points representing the image of the unit circle has at least doubled. This means that unless we do something we will run out of memory in a finite number of iterations.

What we do is \textit{pruning} which amounts to checking each point $z_2$ that is not the attachment point or terminal point of the leg whether it can be removed without changing the isotopy class of the represented map. In practice this means checking whether two arcs of circles, one  through $z_1$ and $z_2$ the other through $z_2$ and $z_3,$ can be replaced by an arc of circle going from $z_1$ to $z_3$ without changing isotopy class. Using a Mobius transformation to change coordinates the question becomes whether a line segment $(w_1, w_2)$ and a line segment $(w_2, w_3)$ can be homotopied to a line segment $(w_1, w_3)$ without crossing distinguished points, a question that can be easily answered.


\begin{figure}
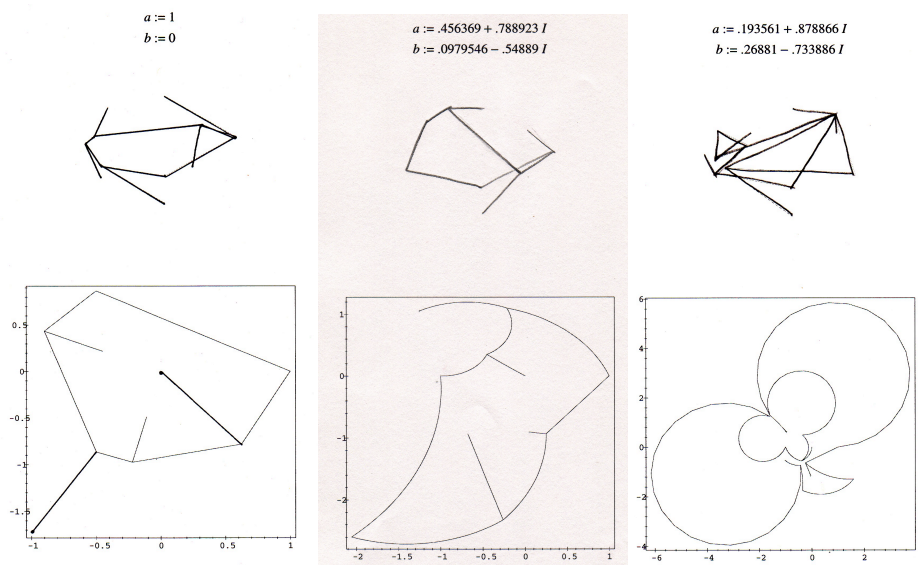

\begin{center}
   \drawfigrealmedusazero \drawfigrealmedusatwo \drawfigrealmedusatwenty 
   \caption{
\label{fig:actualmedusas}
Each of the three columns above shows Maple output of the actual Medusas used in the iteration of the Medusa algorithm for the mating of $1/7$ with $1/3$ (rabbit mate basilica). In each column, the top figure is the Medusa on the sphere, the lower figure is the Medusa projected onto the plane.  Leftmost is the initial Medusa, central is after 2 steps, rightmost is after 20 steps. 
}
\end{center}
\end{figure}

\subsection*{Drawing the Julia set.}
In addition to producing a sequence of maps $R_{a_n, b_n}$ converging to the mating, the Medusa algorithm can be used to draw successive approximations to the Julia set of the mating.  
At the beginning of the program, a ``painted'' sphere $\mathcal{K}_0$ is created, with each point in the upper hemisphere painted black, and each point in the lower hemisphere painted white (or clear).
At each iteration of the algorithm, given parameters $a_m, b_m$ 
and a painted sphere $\mathcal{K}_{m-1}$ (i.e., a sphere with each point marked one of black or white),
the program computes the pull back of $\mathcal{K}_{m-1}$ by $R^{-1}_{a_m,b_m}$, to create $\mathcal{K}_{m}$.

When the sequences  ${(a_m, b_m)}$ converge, then $R_{a_m, b_m}$ converges to $R_{a,b} \cong f_1 \mate f_2$, and $\mathcal{K}_m$ converges to $\mathcal{K}$, with white or clear marking the Julia set of $f_1$, and black the Julia set of $f_2$.  

For example, let $c_{1/4}$ be the parameter which is the landing point in the Mandelbrot set of the external ray of angle $1/4$ ($c_{1/4} \approx -0.228 + 1.115 i$).  This is a tip point on the rabbit bulb.  The mating of $z^2 + c_{1/4}$ with itself exists, and is studied in detail in \cite{Mil-pasting}.  
In this case the Julia set of the mating is the entire sphere, so the approximations $\mathcal{K}_n$ drawn by Medusa are particularly interesting.  Figure~\ref{fig:lattessequence} shows approximations $\mathcal{K}_{6}$, $\mathcal{K}_{10}$, and $\mathcal{K}_{14}$ for this mating. Also see \S\ref{sec:lattes} for other similar examples. 


\begin{figure}
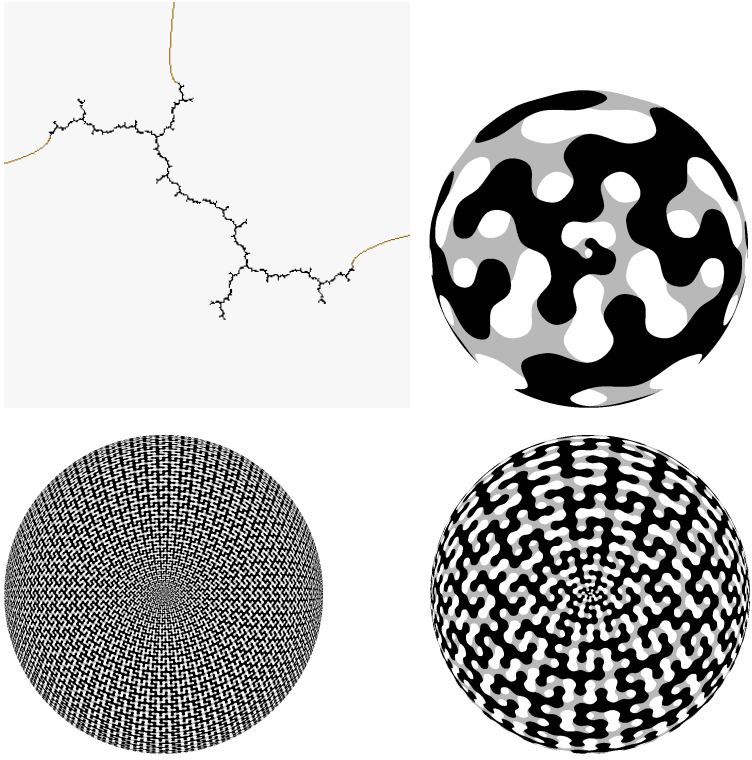

\begin{center}
 \drawfigJonefourth   \ \ \drawfiglatone  \\
 \vspace{.125in}
 \drawfiglatthree \ \  \ \ \ \ \ \ \ \ \ \drawfiglattwo
   \caption{
 \label{fig:lattessequence}
Upper Left: the Julia set of $f (z) = z^2 + c_{1/4}$, where $c_{1/4}$ is the landing point in the Mandelbrot set of the external ray of angle $1/4$ (so $c_{1/4} \approx -0.228 + 1.115i$), shown with critical orbit rays $1/4, 1/2, 0$.  Clockwise around (upper right, lower right, lower left):  approximations on the sphere $\mathcal{K}_6, \mathcal{K}_{10}, \mathcal{K}_{14}$, respectively, to the Julia set of $f (z) = z^2 + c_{1/4}$ mated with itself.
%
}
\end{center}
\end{figure}


The full source code for Medusa is available for download at \cite{CUweb}.
There are still a few bugs, most notably: when mating with a $p/q$
where $q$ is even, the algorithm will converge properly for a few steps,
then start diverging. 

\hide{
\subsection{Remarks on the code.}

\begin{comment}What follows is a short user guide Suzanne wrote to the code in 1999.  I'm not sure whether we want this level of detail in the paper, perhaps we'll just put it online.  But I'm including it here just in case, for now. \end{comment}

Below are brief instructions for how to use these programs.

There are 4 executable files that work together to form matings.

1. {\bf mate\_interact} is the first program one would want to run (for unix
users, after you have downloaded, unzipped, and compiles the programs,
simply type  {\bf mate\_interact} at a command prompt). Enter a $p_1/q_1$ and $p_2/q_2$,
and mate\_interact computes a list of real numbers $(a_n, b_n)$  which will
converge to the parameters $(a,b)$ in the rational pap $f(z) =(az^2 + 1 -
a)/(bz^2 + 1 - b)$, which is the mating of the two polynomials. 
 {\bf mate\_interact} builds a text file containing this list, called
{ \it param.txt} by default. 

2.  {\bf drawpullback} will output a postscript picture of the mating found by
mate\_interact, which is stored in {\it param.txt}.  To use: type
{\bf drawpullback}.  It will ask for an ``Output filename".  Type in something
with a .eps ending.  It will ask ``Read from file", here type the file you
just created with {\bf mate\_interact}, called {\it param.txt} if you chose the
default.  The picture it creates is on the plane, so it will ask you for
bounds.  If you want the square $-4...4$ by $-4...4$, tell it: min(Re(z))
is $-4$, max(Re(z)) is 4, min(Im(z) is $-4$, and max(Im(z) is $4$.  It finally
asks what resolution(dpi) you want,  $144$ usually suffices. Then you will see
it printing a rough sketch of the picture to the screen.  This is so you
can monitor it's progress, and also tell if the bounds you chose are big
enough.  When it is done, the .eps file you named is in your current directory.

3. {\bf draw\_pb\_sphere} is an alternate program for drawing pictures of
matings.  A mating is naturally a picture on a sphere.  The flat version
drawn in {\bf drawpullback} is a stereographic projection.  {\bf draw\_pb\_sphere}
will draw the mating on the sphere.  It also outputs an .eps file, and
reads in the text file created by {\bf mate\_interact}.  If you've mated $p_1/q_1$
with $p_2/q_2$, then what you see in the {bf draw\_pb\_sphere} picture is $p_1/q_1$ in
black and $p_2/q_2$ in clear.  The grey you see is the back of the black
julia set, which you see because you can look through the clear julia set
and see the other side of the sphere. Zero lies in the center of each
julia set.  The center of $p_1/q_1$ is at the south pole.  The center of $p_2/q_2$
is at the north pole.  Note that each julia set still preserves its
structure (you can recognize it in the mating pic), but parts of each are
pulled and distorted to fill up the cracks between parts of the other set.

4. {\bf find\_matings} is independent of the other three.  This program computes
the matings of $n/256$ for each $n$ with one other angle $p/q$.  It was never
quite polished ... it is written for $p/q = 3/7$, and to run with a
different $p/q$ the user must change the source code by hand. 
}

\section{Examples}
\label{sec:examples}

In this section we discuss several types of matings with different properties.  
For simplicity, we will refer to a PCF quadratic polynomial simply by its rational angle $\theta_c = p/q$, or sometimes $f_{p/q}$. 

\subsection{Simple examples}
We explain our first example of an image of a mating produced by the Medusa algorithm in detail.   We will mate the two quadratic polynomials shown in Figure~\ref{fig:rabmatebas}: $f_1$ will be the rabbit, $1/7$, and $f_2$ will be the basilica, $1/3$.  

\begin{figure}
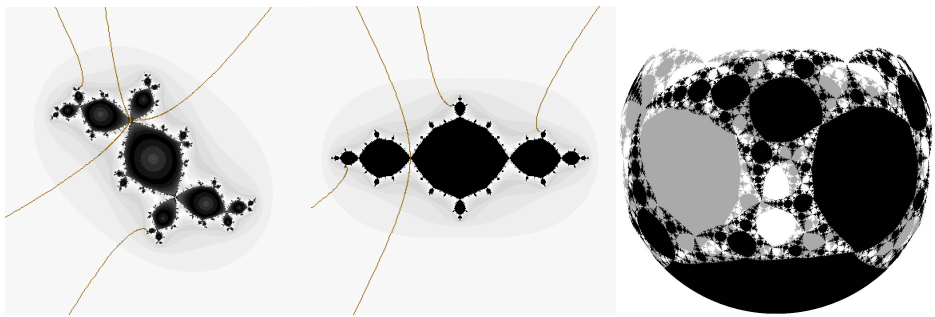

\begin{center}
   \drawfigJoneseventhonethird \drawfigJonethirdoneseventh 
  \drawfigrabmatebas
   \caption{
 \label{fig:rabmatebas}
From left to right:  The Julia set of the rabbit, critical angle $1/7$, then the Julia set of the basilica, critical angle $1/3$, both shown with both sets of critical orbit rays ($1/7, 2/7, 4/7, 1/3, 2/3$) for comparison; finally, the mating $1/7$ mate $1/3$ on the sphere, with $1/3$ in black, and $1/7$ clear.
}
\end{center}
\end{figure}

Let $F = f_1 \mate f_2 = 1/7 \mate 1/3$.  The rightmost sphere in Figure~\ref{fig:rabmatebas} illustrates the Julia set of the mating $F$.    Due to our normalization (Equation~\ref{eqn:Fnormalform}), the critical point $0$ of $f_1$ is always at $z=0$ in the sphere, shown as the south pole, and the critical point $0$ of $f_2$ is sent to $z=\infty$ in the sphere, shown as the north pole.  The portion of the filled Julia set of the mating $F$ which corresponds to $J(f_1)$ (the rabbit) is shown in clear, and ``centered'' about the north pole. The portion  corresponding to $J(f_2)$ (the basilica) is shown in black on the front half of the sphere, and grey on the back half (to indicate that to see this, you are looking through $J(f_1)$).   However, due to the symmetry of the Julia sets of quadratic polynomials, this image is invariant under $180$ degree rotation about the vertical axis, hence the grey image in the back does not convey new information.  
Also, the fixed point $z=1$ (corresponding to the $\beta$-fixed points of $f_1, f_2$), is in the dead center of the image, in the front.  
Note reversing the order of mating, drawing the image of $1/3 \mate 1/7$, would have the effect of a $180$ degree rotation about the central horizontal axis (from $z=1$ to $z=-1$), and flipping the colors.

\medskip \noindent{\bf Self-mating.} The limb of the mandelbrot set enclosed by rays of angle $1/3, 2/3$ (see Figure~\ref{fig:mandelbrot}) is the only limb which is its own complex conjugate.  As such,  any PCF quadratic polynomial which is not in that limb can be mated with itself.  Such a mating clearly has extra symmetries.  The leftmost image in Figure~\ref{fig:rabmaterab} is the rabbit $1/7$ mated with itself.  We discuss self matings more in \S\ref{sec:selfmate}.

\begin{figure}
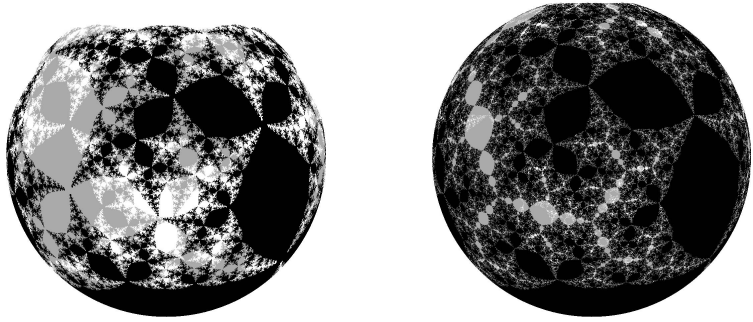

\begin{center}
   \drawfigrabmaterab  \hspace{0.5in}
      \drawfigrabMbasinrab
   \caption{
 \label{fig:rabmaterab}
Left: $1/7$ mate $1/7$, rabbit mate rabbit.
Right: $1/7$ mate $10/63$, i.e., replace each disk in the leftmost clear rabbit with a basillica.
}
\end{center}
\end{figure}

\medskip \noindent {\bf Tuning.}  One simple way to make a mating more complicated is by tuning one of the quadratic polynomials.  The result shows up as you would expect.   In figure~\ref{fig:rabmaterab}, compare the rabbit mate rabbit on the left with the right figure, in which the clear rabbit has been tuned with a basilica.  We explore further expectations (and surprises) concerning tunings in \S\ref{sec:sequences}.

\subsection{Shared Matings}
\label{sec:sharedmatings}

One of the intriguing observations in the study of matings is that it can happen that two distinct pairs of PCF quadratic polynomials give rise to the same mating $F$.
If $f_1 \mate f_2 \cong F \cong f_3 \mate f_4$, and $f_1 \neq f_3$ or $f_2 \neq f_4$, then we call $F$ a {\em shared mating}. 

The simplest kind of shared mating is when $f_1 \mate f_2 \cong f_2 \mate f_1$. For example, the left side of Figure~\ref{fig:rabmateaero} illustrates such a shared mating of the rabbit ($1/7$) and aeroplane ($3/7$).
Of course, taking a shared mating and performing the same tuning on each quadratic polynomial will produce another shared mating, for example as on the right side of  Figure~\ref{fig:rabmateaero}.

\begin{figure}
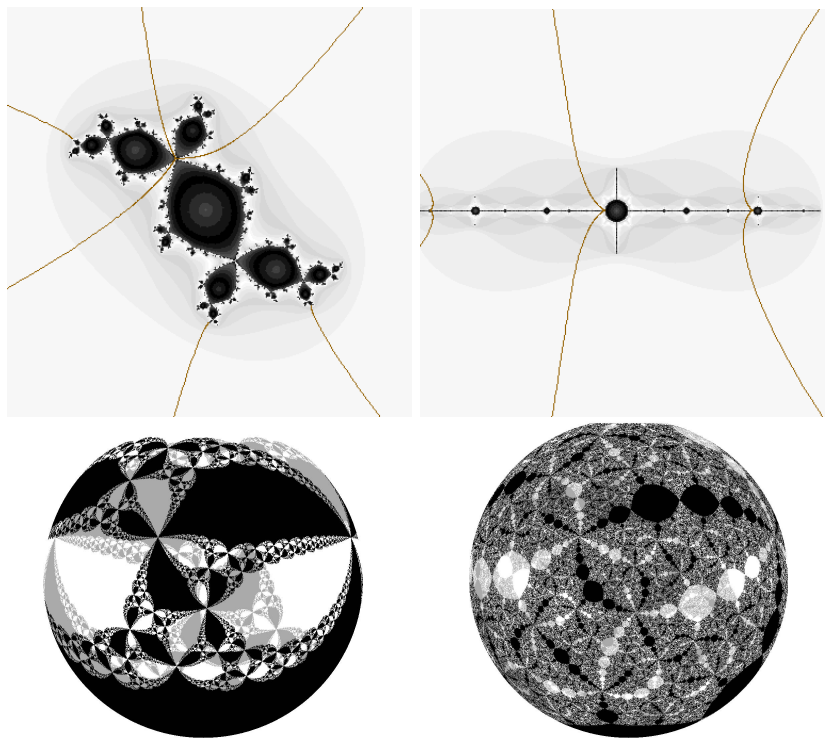

\begin{center}
    \drawfigJonesevenththreeseventh \ \drawfigJthreeseventhoneseventh
   \\   \drawfigrabmateaero
      \hspace{.5in}
      \drawfigbasinrabmateaero
   \caption{
 \label{fig:rabmateaero}
Upper left:  the rabbit, $1/7$; Upper right: the aeroplane, $3/7$, both shown with both sets of critical orbit rays ($1/7, 2/7, 4/7, 3/7, 6/7, 5/7$). 
Lower left: the shared mating the rabbit mate the aeroplane, $1/7$ mate $3/7$, equivalently, the aeroplane mate the rabbit.
Lower right: basilicas in the rabbit mate basilicas in the aeroplane, $10/63$ mate $28/63$.
}
\end{center}
\end{figure}

Wittner (\cite{WittnerT}) studied this, and related, shared matings.

\subsection{Space-filling curves and Latt\'{e}s mappings}
\label{sec:lattes}

A very different example of a shared mating, discussed in detail in \cite{Mil-pasting}, is a \textit{Latt\'{e}s} map which can be realized as a mating  in four distinct ways:  

\bigskip 

$1/6 \mate 5/14 \ \ \cong \ \ 3/14 \mate 3/14 \ \ \cong \ \ 3/14 \mate 1/2 \ \ \cong \ \ 5/6 \mate 1/2.$

\bigskip

It is not known whether there is a bound on the number of ways in which a quadratic rational map can be realized as a mating. The quadratic polynomials involved above are: $f_{1/6} (z) = z^2 +i,$ a tip point on the rabbit limb; $f_{5/6}(z) = z^2-i$, the complex conjugate of $f_{1/6}$;  $f_{5/14}$, a tip point of the bulb on the basilica bulb corresponding to the rabbit; and $f_{1/2} (z) = z^2 - 2$, the real tip point of the basilica limb (the leftmost point in the mandelbrot set). 
The Julia set for each of $1/6, 5/14, 3/14, 1/2$ is a dendrite, hence has empty interior. For example, the Julia set of $f_{1/4}$ is a dendrite, shown in Figure~\ref{fig:lattessequence}.
Below is a characterization of when this occurs.

\begin{fact} \label{fact:evendenom}
Suppose $P_c$ is a PCF quadratic polynomial. Let $\theta_c = p/q$ be a reduced fraction.  TFAE:
\begin{enumerate}
\item $K_c$ has empty interior; 
\item $q$ is even;
\item $\theta_c$ is strictly pre-periodic under angle doubling.
\end{enumerate}
\end{fact}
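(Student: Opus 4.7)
My plan is to run the proof through the dichotomy on the critical orbit of $P_c$: either the critical point $0$ is periodic (superattracting case) or it is strictly pre-periodic (Misiurewicz case). I will show that each of (1), (2), (3) is equivalent to the critical point being strictly pre-periodic.

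The equivalence (2) $\iff$ (3) I would dispatch first, as a purely arithmetic statement about the doubling map on $\mathbb{Q}/\mathbb{Z}$. Write $q = 2^k m$ with $m$ odd. Then $2^j \theta_c$ in lowest terms has denominator $2^{\max(k-j,0)}\cdot m$, so after exactly $k$ doublings the denominator becomes $m$, which is odd. Since multiplication by $2$ is a bijection of $\mathbb{Z}/m\mathbb{Z}$ (as $\gcd(2,m)=1$), every element with odd denominator is periodic. Hence $\theta_c$ is periodic under doubling precisely when $q$ is odd, and strictly pre-periodic precisely when $q$ is even.

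For (1) $\iff$ (3), I split into the two PCF cases. If $0$ is periodic, then $K_c$ contains the immediate superattracting basin, so it has nonempty interior. Moreover, by the definition of $\theta_c$ given just before Figure~\ref{fig:spider}, $\theta_c$ is either $0$ (fixed case) or the angle of one of two rays landing at the root of the immediate basin; in either case $\theta_c$ lands at a (repelling, or fixed super-attracting in the degenerate $c=0$ case) periodic point, and rays landing at periodic points have periodic angles under doubling via the semiconjugacy $\gamma_c(2t) = P_c(\gamma_c(t))$. Conversely, if $0$ is strictly pre-periodic, then $c = P_c(0)$ is strictly pre-periodic too (a short computation using that $P_c$ is $2$-to-$1$ away from $0$ shows $c$ cannot lie on the eventual cycle without forcing $0$ itself to be periodic), and $\theta_c$ is defined as the angle of the external ray landing at $c$; the same semiconjugacy then gives that $\theta_c$ is strictly pre-periodic under doubling. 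To close the loop I need that $K_c$ has empty interior in this case: this is the standard consequence for PCF quadratics that if the critical orbit is not captured by a super-attracting cycle, the Fatou set is empty, so $K_c = J_c$ is a dendrite.

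The only step requiring care is the last one, showing that a Misiurewicz $P_c$ has empty Fatou set. I would cite the Fatou–Sullivan classification: every Fatou component of a rational map is eventually periodic, and every periodic Fatou cycle must absorb a critical orbit. Since the single critical orbit of $P_c$ is strictly pre-periodic (hence eventually lands in the Julia set, at a repelling cycle), no Fatou cycle can exist, so $K_c$ has empty interior. The other direction, that a periodic critical orbit forces nonempty interior, is immediate from the existence of the super-attracting basin. With these pieces in place, chaining (1) $\iff$ (critical point strictly pre-periodic) $\iff$ (3) $\iff$ (2) completes the proof.
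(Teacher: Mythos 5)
The paper states this as a ``Fact'' and offers no proof at all, so there is nothing to compare against; your write-up supplies the standard argument, and it is correct. The arithmetic equivalence $(2)\Leftrightarrow(3)$ via writing $q=2^k m$ with $m$ odd is exactly right: the denominator drops to $m$ after $k$ doublings, doubling is a bijection on fractions with denominator $m$ (so every such angle is periodic), and for $k\geq 1$ the orbit can never return to denominator $2^k m$, so $\theta_c$ is strictly pre-periodic precisely when $q$ is even. Your reduction of $(1)\Leftrightarrow(3)$ to the periodic/Misiurewicz dichotomy on the critical orbit, including the observation that $P_c^n(c)=c$ forces $P_c^n(0)=0$ because $0$ is the unique preimage of $c$, and the use of the semiconjugacy $\gamma_c(2t)=P_c(\gamma_c(t))$ together with the paper's definition of $\theta_c$ in each of the two cases, is sound.

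One small imprecision to fix in the wording: twice you say the Fatou set of a Misiurewicz $P_c$ is empty, or that ``no Fatou cycle can exist.'' The basin of $\infty$ is always a (superattracting) Fatou cycle for a polynomial, so what you mean is that there are no \emph{bounded} Fatou components, equivalently that the interior of $K_c$ is empty. The argument itself is unaffected: any bounded periodic Fatou cycle of a PCF quadratic must be superattracting (parabolic basins and rotation domains are excluded by finiteness of the postcritical set), hence must contain the finite critical point $0$ in a cycle, contradicting strict pre-periodicity; and by Sullivan every bounded component is eventually periodic. You should also note explicitly, as you implicitly do, that rationality of $\theta_c$ (hence eventual periodicity under doubling) is what upgrades ``not periodic'' to ``strictly pre-periodic'' in the Misiurewicz case.
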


Thus the mating of any two quadratic polynomials satisfying Fact~\ref{fact:evendenom} (including the shared mating above) has Julia set the entire Rieman sphere.
You can visualize such a mating as a space-filling curve on the sphere (each of the empty interior Julia sets is a curve which is pulled into becoming a space-filling curve). 
Further, since the  Julia set of $f_{1/2}$ is a line segment, any mating of the form $p/q \mate 1/2$ where $q$ is even will create a space-filling Peano curve.

Since the Julia set is the entire Riemann sphere, we cannot very well study such matings by drawing their Julia sets.  The harmonic measure supported on the Julia set is an object which deserves further study.
One could hope to learn something by examining the approximations to the Julia set drawn by the program Medusa in the steps of the algorithm converging to the mating.
See Figure~\ref{fig:lattesshared}.

\begin{figure}
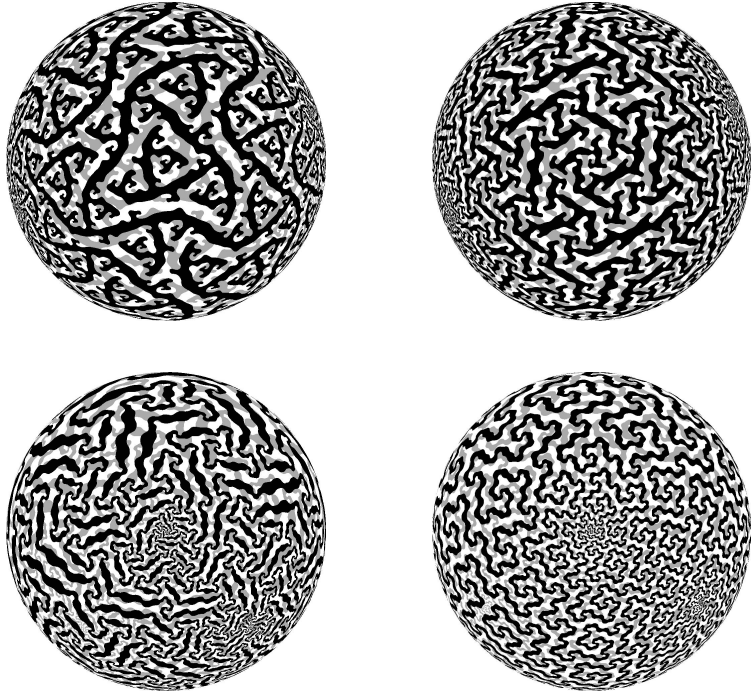

\begin{center}
   \drawfiglattesone
      \hspace{.5in}
      \drawfiglattestwo
 \vspace{.25in}
      \drawfiglattesthree
      \hspace{.5in}
      \drawfiglattesfour
   \caption{Each of the four images above illustrates a Medusa approximation $\mathcal{K}_{12}$ to the same shared Latt\'{e}s mating.  
   Upper left: $1/6$ mate $5/14$.  Upper right: $3/14$ mate $3/14$.  Lower left: $1/2$ mate $3/14$.  Lower right: $1/2$ mate $5/6$. (Note the two lower figures are mated in reverse order from the shared mating.  Just rotate the picture 180 degrees and exchange the colors to see the correct image).
 \label{fig:lattesshared}
}
\end{center}
\end{figure}

\subsection{Self Matings}
\label{sec:selfmate}

Carston Peterson has observed that if $f$ is any PCF quadratic polynomial which is not in the $1/2$-limb of the Mandelbrot set (i.e., not in the unique limb which is its own complex conjugate), then
the following two rational maps are topologically conjugate:
\begin{enumerate}
\item start with $f \mate f$, then mod out by the obvious symmetry, and
\item $f \mate f_{1/2}$, where $f_{1/2}(z) = z^2 - 2$.
\end{enumerate}

This is because for $f_{1/2}$, the Julia set is a line segment, $[-2,2]$, and every external ray of angle $\theta$ has the same landing point as the ray of angle $1-\theta$ (the ray $0$ is horizontal and lands at $2$, the ray of angle $1/2$ is horizontal and lands at $-2$).  

For example, shown in Figure~\ref{fig:babyrabselfmate} is the Julia set of $f_{1/5}$, together with the Julia sets of both the self mating of $f_{1/5}$, and the mating of $1/5$ with $1/2$.  Since the Julia set of $1/2$ is simply a line segment, note in the figure how this simple segment is twisted to fill up all of the black.  

\begin{figure}
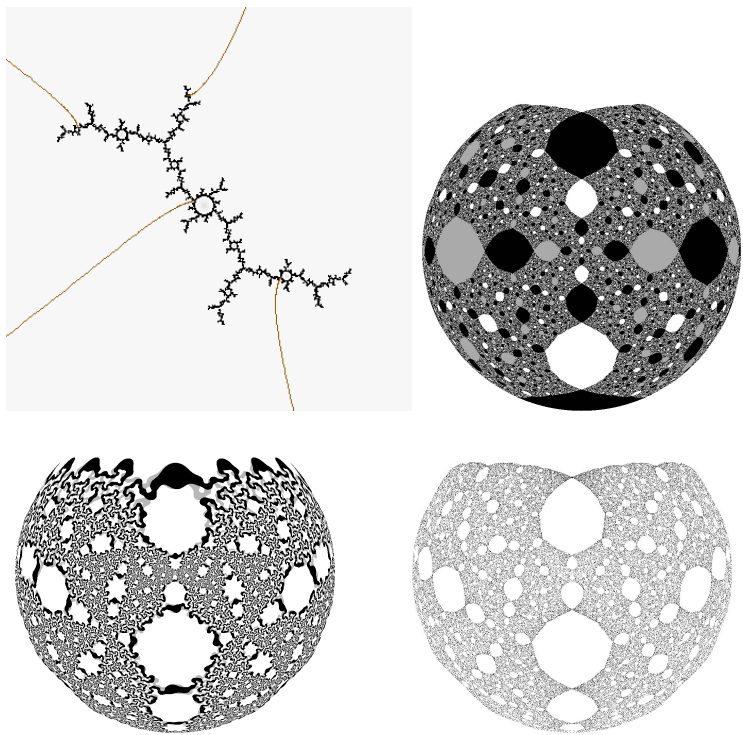

\begin{center}
 \drawfigJonefifth \ 
   \drawfigbabyrabmatebabyrab  
\\
      \drawfigbabyrabMlineapprox
       \hspace{0.35in}
      \drawfigbabyrabMline
   \caption{
 \label{fig:babyrabselfmate}
Upper Left: The Julia set of $f_{1/5}$, which is the center of largest baby Mandelbrot set off of the rabbit bulb, shown with critical orbit rays $1/5, 2/5, 4/5$. Upper Right: $1/5$ mate $1/5$.
Lower Right: $1/2$ mate $1/5$, i.e., mod out the upper figure by the obvious symmetry. 
Lower Left: an approximation $\mathcal{K}_{16}$, to $1/2$ mate $1/5$. The black is $1/2$, so shows the simple line twisting to fill up the alloted space.
}
\end{center}
\end{figure}

\subsection{Sequences of matings, and their limits}
\label{sec:sequences}

One question about matings which has yielded an interested study is: If $f_1$ and $f_2$ are quadratic polynomials not in complex conjugate limbs, which are not PCF, when does a mating exist (assuming connected Julia sets)?  If $f_1$ and $f_2$ are hyperbolic, thus stable perturbations of hyperbolic PCF polynomials $g_1, g_2$, each with a super attracting periodic cycle, the mating exists as a deformation of the mating of $g_1, g_2$.   Several papers have appeared constructing matings between particular non-hyperbolic polynomials (see Ha\"{i}ssinksy and Tan Lei \cite{HTL},  Luo \cite{JLuo}, Yampolsky, Zakeri \cite{YZ-siegel}.) However, Epstein \cite{Epstein-QMC} has shown that mating does not extend continuously to the boundary of the hyperbolic component (in fact, the set of points in $\partial {M} \times \partial {M}$ where there is no continouous extension is dense).  Epstein's theorem is that an obstruction to continuously extending this map to a mating between the two root points of the hyperbolic components occurs whenever in the mating $g_1 \mate g_2$, the immediate basins of the superattracting cycles of $g_1, g_2$ touch along a distinguished repelling cycle (excluding $g_i(z) = z^2$).  For example, this occurs in the mating of the rabbit and the aeroplane, Figure~\ref{fig:rabmateaero}.  That this is a shared mating is an additional coincidence, not needed for Epstein's theorem.  

We can  use Medusa to see a different type of example of why mating as a map from ${M} \times {M}$ to the space of quadratic rational maps is not continuous.   We examine a few convergent sequences of quadratic polynomials, 
$\theta_m, \omega_m \to \theta, \omega,$ as $m\to \infty$,
such that the mating 
$\theta_m \mate \omega_m$
exists for every $m$, but 
$\theta \mate \omega$
either does not exist, or is not the limit of 
$\theta_m \mate \omega_m$.

Below are some simple examples of sequences with no limit, or the wrong limits.
\begin{enumerate}
\item First consider $\theta_m = \omega_m = \frac{1}{2^m - 1}$, so $\theta = \omega = 0$.  Note $0$ corresponds to $z \mapsto z^2$, so $\theta \mate \omega = 0 \mate 0$ is just $z \mapsto z^2$, with Julia set the circle.  However, Medusa output suggests that the Julia set of $\theta_m \mate \omega_m$ is much more complicated than the unit disk.  The leftmost image in Figure~\ref{fig:sequences} shows the Julia set of $1/255 \mate 1/255$ (recall Figure~\ref{fig:rabmaterab} shows the first element of the sequence, $1/7 \mate 1/7$).
\item A similar example is given by $\theta_m =  \frac{1}{2^m - 1}, \omega_m = \frac{2^{m-1}-1}{2^m-1}$, so $\theta=0$ and $\omega = 1/2$.  Note $0 \mate 1/2$ is just $1/2$, i.e., $z \mapsto z^2-2$, with Julia set $[-2,2]$.
As in the previous example, Medusa output shows $\theta_m \mate \omega_m$ is quite complicated.  The center of Figure~\ref{fig:sequences} shows $1/511 \mate 255/511$ (and Figure~\ref{fig:rabmateaero} shows the first element of the sequence, the rabbit mate the aeroplane). 
\item Finally, we examine $\theta_m = \omega_m = \frac{(2^{2m}-1)(2/3) - 1 }{2^{2m+3}-1}$ (i.e., the sequence $9/31, 41/127, 169/511, \ldots$, of angles of the upper ray landing at the root point of the bulbs proceeding from the rabbit to the basilica), hence $\theta = \omega = 1/3$.  Since $f_{1/3}(z)=z^2-1$ is the basilica, it is its own complex conjugate, and its self mating does not exist.  The rightmost image in Figure~\ref{fig:sequences} is the Julia set of $169/511$ mated with itself.
\end{enumerate}


\begin{figure}
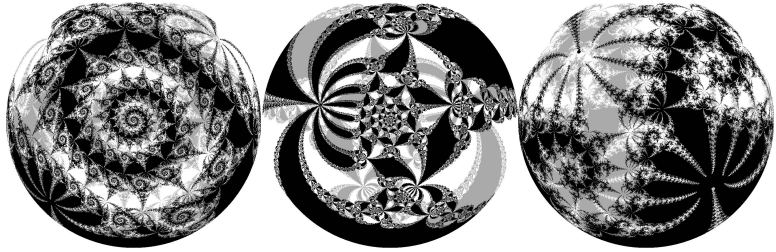

\begin{center}
   \drawfigseqone \drawfigseqtwo \drawfigseqthree
   \caption{
 \label{fig:sequences} Medusa images of the Julia sets of the following matings:
Left: $1/511$ mate $1/511$.
Center: $1/511$ mate $255/511$.
Right: $169/511$ mate $169/511$.
\textit{I should elaborate...}
}
\end{center}
\end{figure}

\bibliographystyle{alpha}
\bibliography{medusa}

 \end{document}